 \numberwithin{equation}{section}
\newtheorem{thm}{\textbf{Theorem}}[section]
\newtheorem{df}[thm]{\textbf{Definition}}
\newtheorem{lemme}[thm]{\textbf{Lemma}}
\newtheorem{proposition}[thm]{\textbf{Proposition}}
\newtheorem{corollaire}[thm]{\textbf{Corollary}}
\newtheorem{ex}[thm]{\textbf{Exemple}}
\newtheorem{prop}[thm]{\textbf{Proposition}}
\newtheorem{rem}[thm]{\textbf{Remark}}
\newtheorem*{CB}{\textbf{Conjectures of Beilinson}}
\def\CH{\mathop{\rm CH}\nolimits}
\def\Corr{\mathop{\rm Corr}\nolimits}
\def\End{\mathop{\rm End}\nolimits}
\title{Motives of quadric bundles}
\author{Johann Bouali}
\begin{document}

\maketitle

\begin{abstract}
This article is about motives of quadric bundles. In the case of odd dimensional fibers and where the basis is of dimension two we give an explicit relative and absolute Chow-K\"unneth decomposition. This shows that the motive of the quadric bundle is isomorphic to the direct sum of the motive of the base and the Prym motive of a double cover of the discriminant. In particular this is a refinement with $\mathbb Q$ coefficients of a result of Beauville concerning the cohomology and the Chow groups of an odd dimensional quadric bundle over $\mathbb P^2$. This Chow-K\"unneth decomposition satisfies Murre's conjectures II and III. This article is a generalization of an article of Nagel and Saito on conic bundles \cite{NS}.
\end{abstract}

\section{Introduction}

In this article we work with algebraic varieties over the field $\mathbb C$ of complex numbers, i.e. schemes of finite type over $\mathbb C$. All the Chow groups of these varieties will be with rational coefficients. All the singular cohomology groups will be with rational coefficients.

Murre's conjectures give an answer of Beilinson's conjectures on the Chow groups of the smooth projective varieties.

\begin{CB} For a smooth projective variety $X$ of dimension $n$, there exists on $\CH^l(X)$, $0\leq l\leq n$,
a decreasing filtration $F^\nu$ ($\nu\leq 0$) with the following properties.

\begin{itemize}
\item[(i)] $F^0=\CH^l(X)$, $F^1=\CH^l_{\hom}(X)$.
\item[(ii)] $F^r\cdot F^s \subset F^{r+s}$ under the intersection product.
\item[(iii)] $F^*$ is functorial for morphisms $f: X\to Y$.
\item[(iv)] Assuming that the K\"unneth components $[\Delta_X]^{2n-j,j}\in F^nH^{2n}(X\times X,\mathbb{Q})$ of the cohomology class of the diagonal
are algebraic, given by the class of $p_j\in Z^n(X\times X,\mathbb{Q})$, then $Gr_{F^*}^\nu\CH^l(X)$ depends only on the motive $h^{2l-\nu}(X)=(X,[p_{2l-\nu}]_{\hom})$ modulo homological equivalence, i.e.
$p_{j,*}Gr_{F^*}^\nu\CH^l(X)=\delta_{j,2l-\nu}Id_{Gr_{F^*}^\nu\CH^l(X)}$, a correspondence $\Gamma\in \CH^n_{\hom}(X\times X)$ acting as zero on $Gr_{F^*}^\nu\CH^l(X)$(see (i) and (ii) for $X\times X$ and (iii)).
\item[(v)] $F^m=0$ for $m>>0$  (weak version) or, $F^{l+1}=0$ (strong version). 
\end{itemize}

\end{CB}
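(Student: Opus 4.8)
These are of course among the deepest open problems in the subject, so the realistic plan is not to settle them in general but to reduce them to Murre's program and verify the resulting conditions in the cases relevant to this paper. The key reduction, due to a theorem of Jannsen, is that the existence of the filtration $F^\nu$ with properties (i)--(v) for all smooth projective varieties is equivalent to Murre's conjectures, and for a \emph{fixed} $X$ of dimension $n$ it follows once one has a \emph{Chow--K\"unneth decomposition}: mutually orthogonal idempotents $p_0,\dots,p_{2n}\in\CH^n(X\times X)$ with $\sum_j p_j=[\Delta_X]$ and $[p_j]_{\hom}=[\Delta_X]^{2n-j,j}$. Given such projectors one sets $F^\nu\CH^l(X)=\bigcap_{2l-\nu<j\le 2l}\ker(p_{j,*})$, so that $F^0=\CH^l(X)$ (empty intersection) and $F^1=\ker(p_{2l,*})=\CH^l_{\hom}(X)$, which is property (i). Thus the first and decisive step is to construct the $p_j$.

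For the varieties treated here, namely odd-dimensional quadric bundles $\pi\colon X\to S$ over a surface $S$, I would build these projectors in two stages. First a \emph{relative} Chow--K\"unneth decomposition over $S$, using the geometry of the smooth quadric fibers: the relative cohomology splits into a part generated by powers of the relative hyperplane class (of projective-bundle type, contributing Tate twists of $h(S)$) and a primitive middle-dimensional piece governed by the discriminant locus and its associated double cover. Composing the relative projectors with a Chow--K\"unneth decomposition of $S$—which exists unconditionally since $S$ is a surface—and pushing forward along $\pi$ produces the absolute $p_j$, and simultaneously exhibits the announced splitting of $h(X)$ into an $h(S)$-part and the Prym motive of the double cover.

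Properties (ii), (iii) and (v) then follow from the formal calculus of correspondences: (iii) is immediate from the definition of $F^\nu$ through the $p_{j,*}$; (ii) reduces to compatibility of the $p_j$ with the intersection product; and the strong vanishing $F^{l+1}=0$ of (v) follows once one checks that only the projectors $p_j$ with $l\le j\le 2l$ act nontrivially on $\CH^l(X)$. The main obstacle is property (iv), the assertion that $Gr_{F^*}^\nu\CH^l(X)$ depends only on the homological motive $h^{2l-\nu}(X)$, equivalently that a homologically trivial correspondence annihilates the graded pieces; this is precisely Murre's conjectures II and III. Verifying it demands a close analysis of the motives appearing in the decomposition: the Tate pieces contribute only in the evident degrees, while for the Prym motive one must show that homologically trivial self-correspondences act nilpotently and in the correct weight range. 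I expect this last verification, rather than the formal bookkeeping, to be the heart of the matter.
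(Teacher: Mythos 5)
This ``statement'' is not a theorem of the paper: it is the block of Beilinson's conjectures, quoted in the introduction purely as motivation, and the paper offers no proof of it. You correctly recognize this and instead outline the paper's actual program --- invoke Jannsen's equivalence with Murre's conjectures, construct a relative Chow--K\"unneth decomposition for the quadric bundle using the hyperplane-section projectors and the Prym projector attached to the double cover of the discriminant, push forward to absolute projectors via a Chow--K\"unneth decomposition of the surface $S$, and then verify Murre II and III degree by degree. That matches the structure of the paper closely.

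Two caveats on your reduction, though. First, Jannsen's theorem equates the \emph{strong} Beilinson conjectures with the \emph{full} set of Murre's conjectures I--IV \emph{for all} smooth projective varieties; for a fixed $X$, possessing a Chow--K\"unneth decomposition satisfying II and III does not by itself yield properties (ii) (multiplicativity under the intersection product) and (iii) (functoriality) of the filtration, and your claim that these ``follow from the formal calculus of correspondences'' is an overstatement --- (ii) in particular requires compatibility statements on $X\times X$ that are not formal. Second, the paper proves only Murre I, II and III for these quadric bundles and does not address conjecture IV (independence of the filtration), so even in the cases treated here the full package (i)--(v) is not established. Your proposal would be accurate if you presented it as: the conjectures remain open; the paper verifies the accessible part of Murre's program (I, II, III) for odd-dimensional quadric bundles over a surface, which is evidence for, but not a proof of, the Beilinson filtration for these varieties.
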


\begin{df}(\cite[sec. 4.2.3]{MS-P}, \cite[def. 6.1.1]{livre motif})

Let $X$ be a smooth projective variety of dimension $n$. We say that $X$ has a Chow-K\"unneth decomposition if there exists, for $0\leq j\leq 2n$, $p_j\in{\CH_n(X\times X)}$ such that 
\begin{enumerate}
\item $p_jp_k=\delta_{j,k}p_j$ i.e. $p_j$ are projectors, mutually orthogonal,
\item $\sum_{j=0}^{2n}p_j=\Delta_X\in{\CH_n(X\times X)}$ where $\Delta_X$ denotes the class of the diagonal $\Delta_X\subset{X\times X}$
\item $[p_j]_{hom}=[\Delta_X]^{2n-j,j}$ where $[\Delta_X]^{2n-j,j}\in F^nH^{2n}(X\times X,\mathbb{Q})$ denotes the $j^{th}$ K\"unneth component of the cohomology class of the diagonal. Equivalently, the action of $p_j$ on cohomology is $p_{j,*}H^k(X)=\delta_{j,k}I_{H^k(X)}$.
\end{enumerate}
\end{df}

\begin{df}(\cite[sec. 4.3.2.1]{MS-P}, \cite[sec. 7.2]{livre motif})

For a smooth projective variety $X$ of dimension $n$, Murre stated the following set of conjectures :
\begin{enumerate}
\item [I] (Existence) $X$ admits a Chow-K\"unneth decomposition given by projectors $p_j$, for $0\leq j\leq 2n$.
\item [II] For all $0\leq l\leq n$, the projectors $p_0$,..., $p_{l-1}$ and $p_{2l+1}$,..., $p_{2n}$ act as zero on $\CH^l(X)$.
\item [III] For all $0\leq l\leq n$, $\ker(p_{2l})=\CH^l(X)_{hom}$. (Note that clearly $\ker(p_{2l})\subset{\CH^l(X)_{hom}}$).
\item [IV] For all $0\leq l\leq n$ the filtration on $\CH^l(X)$ defined by $F^p\CH^l(X)=\ker(p_{2l})\cap \ker(p_{2l-1})\cap ...\cap \ker(p_{2l-p+1})$ is independent of the choice of Chow-K\"unneth decomposition.
\end{enumerate}
\end{df}
Jannsen proved \cite[Thm. 5.2]{Ja94}
that this set of conjectures is equivalent to the strong version of conjectures of Beilinson \cite[sec. 4.3.2.2]{MS-P} and that if the
conjectures are true, the filtrations agree.

\begin{ex}

\begin{itemize}
\item[1.] Smooth projective curves satisfy Murre's conjectures.
\item[2.] For smooth projective surfaces, Murre has constructed a Chow-K\"unneth decomposition, via Picard and Albanese projectors which satisfy Murre's conjectures II and III (\cite{Murre},\cite{Murre2}).
\item[3.] Smooth complete intersections in the projective space admit a Chow-K\"unneth decomposition.
\item[4.] If a smooth projective variety $X$ of dimension $n$ admits a Chow-K\"unneth decomposition given by projectors $p_0,\ldots,p_{2n}$,
then $X\times X$ admits a Chow-K\"unneth decomposition, given by the projectors $q_r=\sum_{i+j=r}p_i\times p_j\in \CH^{2n}(X\times X \times X \times X)$, for $0\leq r\leq 4n$.   
\item[5.] Abelian varieties admit a Chow-K\"unneth decomposition (\cite{Sh74},\cite{DM91}).
\item[6.] Uniruled threefolds admit a Chow-K\"unneth decomposition (\cite{AMS98}) and also certain classes of threefolds with a special condition on $H_{trans}^2(X)$ admit a Chow-K\"unneth decomposition (\cite{AMS00},\cite{MS-S}).
\item[7.] Elliptic modular varieties admit a Chow-K\"unneth decomposition (\cite{GHM}).
\end{itemize}
\end{ex}

\begin{rem}
If a smooth projective variety $X$ a dimension n admits a Chow-K\"unneth decomposition given by projectors
$p_j\in{\CH_n(X\times X)}$ then we have an isomorphism of Chow motives :
$$ch(X)=(X,\Delta_X)\simeq\oplus_{j=0}^{2n}(X,p_j).$$
\end{rem}


Given a quasi-projective variety $S$, let $\mathcal{V}(S)$ be the category whose objects are smooth varieties $X$ with a proper morphism
$f:X\to S$. For $X$, $Y\in{\mathcal{V}(S)}$ a relative correspondence from $X$ to $Y$ is an element $\Gamma\in{\CH(X\times_S Y)}$. We say that $\Gamma$ has degree $p$ if $\Gamma\in\CH_{d_Y-p}(X\times_S Y)$.

The composition of relative correspondences is defined as follows :
\begin{df}(Corti-Hanamura \cite{Corti-Hanamura})

Let $X$,$Y$,$Z\in{\mathcal{V}(S)}$. 
Let $\Gamma_1\in{\CH(X\times_S Y)}$, $\Gamma_2\in{\CH(Y\times_S Z)}$.
We then define
$$\Gamma_2\circ\Gamma_1=p_{X,Z,*}\delta^![\Gamma_1\times\Gamma_2]$$ 
where $\delta=\Delta_Y:Y\hookrightarrow Y\times Y$ is the diagonal embedding and $p_{X,Z}:X\times_S Y\times_S Z\rightarrow X\times_S Z$ is the projection.
Since $Y$ is smooth, $\delta:Y\hookrightarrow Y\times Y$ is a regular embedding (local complete intersection), hence $\delta^!$ is well defined (see \cite{W.Fulton}). Moreover $p_{X,Z}$ is proper.  
\end{df}
 
A relative projector on $X\in\mathcal{V}(S)$ is a relative correspondence of degree $0$ from $X$ to $X$ satisfying $p\circ p=p$.
The category of Chow motives over $S$ is the category whose objects are triples $(X,p,m)$, with $X\in\mathcal{V}(S)$, $p$ a relative projector on $X$
and a morphism from $(X,p,m)$ to $(Y,q,n)$ is a relative correspondence of degree $n-m$ from $X$ to $Y$.

As composition of relative correspondences is compatible with flat base change $S'\rightarrow S$ \cite[Lem. 8.1.6]{livre motif}, there exists a functor
from the category of Chow motives over $S$ to the category of (absolute) Chow motives.

By \cite{BBDG} we have a decomposition 
$$Rf_*\mathbb Q\simeq\oplus_i ^pR^if_*\mathbb Q[-i]$$
in the derived category $\mathcal{D}^b_c(S)$ of constructible sheaves of $\mathbb Q$-vector spaces over $S$.
where $^pR^jf_*{\mathbb Q}_X$ denotes the $j^{th}$ perverse cohomology of $Rf_*{\mathbb Q}_X$.

Note that a relative projector $q$ on $X$ acts on the perverse cohomology $^pR^jf_*{\mathbb Q}_X$ by \cite{Corti-Hanamura}. We denote the action by $q_*$.

\begin{df}(\cite[sect. 4.4.2.2]{MS-P}, \cite[Def. 8.3.3]{livre motif})

Let $f:X\rightarrow S$ be a projective morphism with $X$ smooth of dimension $n$. Let $k$ be the dimension of the generic fiber.
 We say that $f$ has a relative Chow-K\"unneth decomposition if there exists $p_i\in{\CH_n(X\times_S X)}$, $0\leq i\leq 2k$, such that
\begin{enumerate}
\item $p_ip_j=\delta_{i,j}p_i$ ie $p_i$ are relative projectors, mutually orthogonal,
\item $\sum_i p_i=\Delta_X\in{\CH_n(X\times_S X)}$
\item $p_{i,*}{^p}R^jf_*{\mathbb Q}_X=\delta_{i,j}I_{^pR^jf_*{\mathbb Q}_X}$.
\end{enumerate}
\end{df}

\begin{rem}
Let $f:X\rightarrow S$ be a projective morphism with $X$ smooth of dimension $n$. Let $k$ be the dimension of the generic fiber.
If $f$ has a relative Chow-K\"unneth decomposition given by relative projectors $p_j\in{\CH_n(X\times_S X)}$, $0\leq j\leq 2k$ then
we have an isomorphism of relative Chow motives :
$$ch(X/S)=(X,\Delta_X)\simeq\oplus_{j=0}^{2k}(X,p_j).$$
\end{rem}

\begin{df}
Let $X$ and $S$ two algebraic varieties of dimension n and r over $\mathbb{C}$, $X$ is a quadric bundle over $S$ if there exists a flat projective morphism $f:X\rightarrow S$, such that the fibers of $f$ are quadrics of dimension $n-r$.
\end{df}

Here we say that the morphism is projective if we have a relative embedding $X\hookrightarrow\mathbb P(\mathcal{E})\rightarrow S$, with 
$\mathbb P(\mathcal{E})\rightarrow S$ a projective bundle of dimension $n+1$.

We refer to \cite{G.H} for the elementary proprieties of quadrics.

\begin{df}
Let $\xi_i\subset{X}$ be a relative linear section of relative dimension $i$ of 
$f:X\hookrightarrow\mathbb P(\mathcal{E})\rightarrow S$. We define
$$p_{2i}=1/2[\xi_i\times_S\xi_{n-r-i}]\in{\CH_n(X\times_S X)}.$$
\end{df}

The base of the quadric bundles admits a natural stratification,

\begin{proposition}(\cite{T.Terasoma})
Let $f:X\hookrightarrow\mathbb P(\mathcal{E})\rightarrow S$ be a quadric bundle, there is a stratification $\Delta_k\subset{...}\subset{\Delta_1}\subset{S}$ of $S$ by closed subsets where 
$$\Delta_k=\left\{s\in{S},\, rk(X_s)\leq n-r+2-k\right\}.$$
Moreover, if $S$ is smooth we have the inclusion $\Delta_{k+1}\subset{sing(\Delta_k)}$ by properties of determinantal subvarieties\cite{S.Mukai}.

\end{proposition}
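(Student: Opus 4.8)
The plan is to first translate the rank condition into a symmetric determinantal one. Since $f:X\hookrightarrow \mathbb P(\mathcal E)\to S$ realizes $X$ as a family of quadrics in the $\mathbb P^{n-r+1}$-bundle $\mathbb P(\mathcal E)$, with $\mathcal E$ of rank $N:=n-r+2$, the subscheme $X$ is the zero locus of a relative quadratic form, i.e.\ of a section of $\mathcal O_{\mathbb P(\mathcal E)}(2)\otimes f^*\mathcal L$ for some line bundle $\mathcal L$ on $S$. Equivalently this is a symmetric $\mathcal O_S$-morphism
$$q:\mathcal E\longrightarrow \mathcal E^\vee\otimes \mathcal L .$$
For $s\in S$ the fibre $X_s\subset \mathbb P(\mathcal E_s)$ is the quadric attached to $q_s:\mathcal E_s\to \mathcal E_s^\vee\otimes\mathcal L_s$, and by definition $rk(X_s)=rk(q_s)$. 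Working in a Zariski neighbourhood of a point of $S$ where $\mathcal E$ and $\mathcal L$ are trivialised, $q$ becomes a symmetric $N\times N$ matrix $M=(M_{ij})$ of regular functions, and $s\mapsto M(s)$ defines a classifying morphism $\phi:S\to \operatorname{Sym}^2(\mathbb C^{N})$ with $rk(X_s)=rk\,M(s)$.

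Setting $m:=N-k=n-r+2-k$, let $D_m\subset \operatorname{Sym}^2(\mathbb C^N)$ be the locus of symmetric matrices of rank $\leq m$. Then $\Delta_k=\phi^{-1}(D_m)$, cut out locally by the $(m+1)\times(m+1)$ minors of $M$; this is a closed condition, so each $\Delta_k$ is closed, and the inclusions $\Delta_{k+1}\subset\cdots\subset\Delta_1\subset S$ follow at once from $\{rk\leq m-1\}\subset\{rk\leq m\}$. This already gives the stratification.

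The substance is the last assertion, for which I would invoke the local structure of symmetric determinantal varieties (the ``properties'' referred to). The key classical fact is $sing(D_m)=D_{m-1}$, proved by a Schur-complement computation: at a matrix $M_0$ of rank $m_0$, choose coordinates so that $M_0=\begin{pmatrix}A_0&0\\0&0\end{pmatrix}$ with $A_0$ invertible of size $m_0$; a nearby symmetric matrix $\begin{pmatrix}A&B\\B^{T}&C\end{pmatrix}$ (with $A$ still invertible) has $rk=m_0+rk\,(C-B^{T}A^{-1}B)$. Since $(B,C)\mapsto (B,\,C-B^{T}A^{-1}B)$ is an analytic change of coordinates, $D_m$ is locally a smooth factor times the cone of symmetric $(N-m_0)\times(N-m_0)$ matrices of rank $\leq m-m_0$ at the origin. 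If $m_0=m$ this cone is a reduced point and $M_0$ is a smooth point of $D_m$; if $m_0<m$ the origin is the vertex of a genuine determinantal cone, hence a singular point. Therefore $sing(D_m)=\{rk<m\}=D_{m-1}$.

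Finally I would pull this back. Under the (standing) assumption that the quadric bundle is general enough that $\phi$ meets the determinantal stratification with the expected codimension $\binom{k+1}{2}$ --- transversally along each stratum, which is the generic situation and the one relevant in the sequel --- one has
$$sing(\Delta_k)=\phi^{-1}\bigl(sing(D_m)\bigr)=\phi^{-1}(D_{m-1})=\Delta_{k+1},$$
and in particular $\Delta_{k+1}\subset sing(\Delta_k)$, as claimed. I expect the genuine obstacle to be exactly this transversality/expected-dimension input: the bare inclusion can fail for degenerate families (e.g.\ a constant-rank bundle, where $\Delta_k=\Delta_{k+1}$ while $S$ is smooth), so the argument really uses that the minors cut $\Delta_k$ out in the expected codimension. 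Once that is granted, the statement reduces to the pullback of the purely linear-algebraic identity $sing(D_m)=D_{m-1}$.
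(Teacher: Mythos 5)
The paper gives no proof of this proposition: it is imported from Terasoma, and the singular-locus inclusion is delegated to ``properties of determinantal subvarieties'' with a reference to Mukai. Your write-up supplies exactly the standard argument that those citations stand for -- realizing the bundle as a symmetric morphism $q:\mathcal E\to\mathcal E^\vee\otimes\mathcal L$, identifying $\Delta_k$ with the pullback of the symmetric determinantal locus $D_{N-k}$, and analysing $D_m$ locally by the Schur complement -- so in substance you have reconstructed the intended proof. One refinement on the last step: for the stated \emph{inclusion} $\Delta_{k+1}\subset sing(\Delta_k)$ you do not need the transversality/expected-codimension hypothesis you invoke to get the equality $sing(\Delta_k)=\phi^{-1}(D_{m-1})$. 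Your own Schur-complement normal form already shows that at a point $s_0\in\Delta_{k+1}$ the local defining equations of $\Delta_k$ (the $(m-m_0+1)$-minors of $C-B^{T}A^{-1}B$, with $m_0=\operatorname{rk}M(s_0)<m$) vanish to order at least $2$, so the Zariski tangent space of $\Delta_k$ at $s_0$ is all of $T_{s_0}S$; since $S$ is smooth, $s_0$ is then a singular point of $\Delta_k$ as soon as $\Delta_k$ is a proper subvariety near $s_0$. That last proviso is the only genuine hypothesis hiding in the statement (it is what fails in your constant-rank example), and it is automatic in the setting of the paper, where the generic fibre is smooth and $\Delta_1=C$ is a curve in the surface $S$. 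So your proof is correct and matches the cited argument; the transversality assumption is stronger than what the inclusion requires and would be needed only for the reverse inclusion.
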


In this paper we consider a quadric bundle over a surface of odd relative dimension $2m-1$.
Consider the Stein factorization $F_m(X/S)\rightarrow\tilde{\Delta}\rightarrow\Delta$ where $F_m(X/S)$ denotes the relative Fano variety of $m$-dimensional linear subspaces and $\Delta=\Delta_1\subset{S}$. It is a double covering ramified over $\Delta_2\subset{\Delta}$. 

We will note $\Gamma\subset{F_m(X/S)\times_S X}$ the incidence correspondence with $p:\Gamma\rightarrow F_m(X/S)$, $q:\Gamma\rightarrow X$ the restriction to $\Gamma$ of the two projections. 

The morphism $F_m(X/S)\rightarrow\tilde{\Delta}$ is flat projective, so we can take $W\subset{F_m(X/S)}$ be a multisection of $F_m(X/S)\rightarrow\tilde{\Delta}$. We note $\Gamma_W\subset{W\times_S X}$ the restriction of $\Gamma$ to $W$ and $Z=q(\Gamma_W)\subset{X}$. We have $dim(Z\times_{\Delta}Z)=n$ with $Z\times_{\Delta}Z=(Z\times_{\Delta}Z)^+\cup(Z\times_{\Delta}Z)^-$ two $n$ dimensional components :
$$(Z\times_{\Delta}Z)^+=(q\times q)(p\times p)^{-1}(g\times g)^{-1}(\Delta_{\tilde{\Delta}}),~
(Z\times_{\Delta}Z)^-=(q\times q)(p\times p)^{-1}(g\times g)^{-1}(D^{-})$$
where $g:W\rightarrow\tilde{\Delta}$ is the covering and $D^{-}$ denotes the graph of the involution.

\begin{df}

Let $W\subset{F_m(X/S)}$ be a multisection of degree $2d$ and $Z=q(\Gamma_W)\subset{X}$. We define
$$p_{\infty}=(-1)^m1/d^2([(Z\times_{\Delta}Z)^+]-[(Z\times_{\Delta}Z)^-])\in{\CH_n(X\times_S X)}.$$

\end{df}

The main result of the paper is the following :

\begin{thm}
\label{main}

Let $f:X\rightarrow S$ be a quadric bundle with odd dimensional fibers $2m-1$, $X$ smooth projective, $S$ smooth projective with $dim(S)=2$. For more simplicity we assume that 
\begin{itemize}
\item $C=\Delta_1$ is smooth irreducible,
\item The double covering $\tilde{C}\rightarrow C$ is non trivial.
\end{itemize}
Then
\begin{enumerate}
\item [(1)] The orthonormalization $(\{p_{2i}^N\}_{0\leq i\leq 2m-1}, p_{\infty}^N)$ of the family $(\{p_{2i}\}_{0\leq i\leq 2m-1}, p_{\infty})$ constitute a relative Chow-K\"unneth decomposition of $f:X\rightarrow S$.
\item [(2)] This relative Chow-K\"unneth decomposition induces an absolute Chow-K\"unneth decomposition of $X$ which satisfies Murre's conjectures I, II and III.
\end{enumerate}
\end{thm}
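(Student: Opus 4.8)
The plan is to prove (1) by first computing the perverse decomposition of $Rf_*\mathbb{Q}_X$ and identifying its summands, then checking that the given correspondences realize the projections onto these summands, and finally correcting them to genuine orthogonal projectors by orthonormalization. I would begin over the smooth locus $U=S\setminus C$, where $f$ is smooth proper with fibers smooth quadrics of odd dimension $2m-1$; since such a quadric has $H^{2i}=\mathbb{Q}$ for $0\leq i\leq 2m-1$ and vanishing odd cohomology, the sheaves $R^{2i}f_*\mathbb{Q}|_U$ are constant of rank one and $R^{\mathrm{odd}}f_*\mathbb{Q}|_U=0$. Over $C$ the fibers acquire corank one, becoming cones over smooth even quadrics of dimension $2m-2$; the two rulings of the base of such a cone are exactly what the Fano variety $F_m(X/S)$ of $m$-planes and its Stein factorization $F_m(X/S)\to\tilde{C}\to C$ record. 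By the decomposition theorem this produces, in the middle perverse degree, an extra summand supported on $C$, namely the intersection complex of the rank one local system $\mathcal{L}$ attached to the nontrivial double cover $\tilde{C}\to C$; this is the piece that $p_\infty$ must isolate, and its cohomology contributes the Prym motive of $\tilde{C}\to C$.

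Next I would verify the action on perverse cohomology. For the linear section projectors $p_{2i}=\frac12[\xi_i\times_S\xi_{2m-1-i}]$ this is a fiberwise computation over $U$: the correspondence acts on the cohomology of a fiber, and using the intersection theory of linear sections of a quadric (the factor $\frac12$ accounting for the degree two self-intersection) one checks that $p_{2i}$ projects onto the class in $H^{2i}$ and annihilates the others, so that $p_{2i,*}$ is the identity on the corresponding constant perverse summand and zero on the rest. For $p_\infty$ I would analyze the incidence correspondence $\Gamma\subset F_m(X/S)\times_S X$ together with the splitting $Z\times_\Delta Z=(Z\times_\Delta Z)^+\cup(Z\times_\Delta Z)^-$ into the graphs of the diagonal and of the covering involution; computing the self-intersections of the rulings in the cone and using that $\tilde{C}\to C$ is nontrivial should show that $p_\infty$, with the normalization $(-1)^m/d^2$, acts as the identity on the summand attached to $\mathcal{L}$ and trivially on every constant summand.

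With these homological identities in hand, the raw family $(\{p_{2i}\},p_\infty)$ consists of mutually orthogonal idempotents only modulo homological equivalence; to obtain genuine relative projectors I would apply the orthonormalization procedure, whose input is that homologically trivial relative self-correspondences in the relevant codimension are nilpotent. This yields $(\{p_{2i}^N\},p_\infty^N)$ satisfying the three axioms of a relative Chow-K\"unneth decomposition, proving (1). The step I expect to be the main obstacle is the explicit computation of $p_{\infty,*}$ on the middle perverse sheaf: controlling the classes $(Z\times_\Delta Z)^\pm$ and the sign $(-1)^m$ requires a careful local analysis of the cone degeneration and of the ramification of $\tilde{C}\to C$ over $\Delta_2$, which is precisely the point where the conic bundle argument of Nagel and Saito has to be genuinely generalized to higher $m$.

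For (2) I would pass through the functor from relative to absolute Chow motives. Composing the relative projectors $p_{2i}^N$ and $p_\infty^N$ with the pullbacks of a Chow-K\"unneth decomposition of the surface $S$ and of the curves $C$ and $\tilde{C}$ (which exist by Murre for surfaces and classically for curves), and orthonormalizing once more, I would assemble an absolute Chow-K\"unneth decomposition of $X$ that exhibits the motive of $X$ as a direct sum of Tate twists of the motives of $S$, $C$ and of the Prym summand of $h^1(\tilde{C})$; this is the motivic refinement of Beauville's result. Murre's conjecture I is then immediate, while II and III follow because every summand is a Tate twist of the motive of a curve or of the surface $S$, for which II and III are known, so that the vanishing of $p_0,\dots,p_{l-1},p_{2l+1},\dots,p_{2n}$ on $\CH^l(X)$ and the equality $\ker(p_{2l})=\CH^l(X)_{\hom}$ reduce to the corresponding statements for those building blocks.
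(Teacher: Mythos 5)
Your outline of part (1) --- perverse decomposition over $U$ and over $C$, fiberwise verification of the action of $p_{2i}$ and $p_\infty$ on the perverse summands, then orthonormalization --- matches the paper's overall architecture, and your treatment of part (2) (composing with Murre's projectors for the surface $S$ and reducing conjectures II and III to the building blocks) is essentially what the paper does. However, there is a genuine gap at the decisive step. You write that the orthonormalization takes as ``input'' the fact that homologically trivial relative self-correspondences of $X/S$ in the relevant codimension are nilpotent, and you treat this as available. It is not: for a quadric bundle over a surface this nilpotence is precisely the main technical theorem of the paper (any $R\in\CH_n(X\times_S X)$ acting as zero on $Rf_*\mathbb Q$ satisfies $R^9=0$), and its proof occupies all of Section 3 and most of the Appendix. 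It requires (i) a surjectivity statement for $\CH_n(X_U\times_U X_U)$ obtained from a relative cellular decomposition of the smooth quadric bundle after a finite base change, (ii) the localization exact sequence to reduce to a class supported on $X_C\times_C X_C$, and (iii) a description of $\CH_n(X_C\times_C X_C)$ via blowing up a section and projecting to a lower-dimensional quadric bundle $X^H\to C$, together with explicit verification that the resulting generators compose to zero for dimensional reasons. Without this, neither the mutual orthogonality of the corrected projectors nor, more importantly, the identity $\Delta_X=\sum_i p_{2i}^N+p_\infty^N$ (which you need for the second axiom of a Chow--K\"unneth decomposition and which you do not address separately) can be established; the action on $Rf_*\mathbb Q$ only determines the projectors modulo this kernel.

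Two further points. First, the paper does not orthonormalize ``idempotents modulo homological equivalence'': it proves by direct computation with relative correspondences (using $\gamma^t\circ\gamma=(-1)^m d^2(I-\tau_*)$ and dimension counts such as $[\xi_j]^t[\xi_{2m-1-i}]\in\CH_{r+j-i}(S)$) that the $p_{2i}$ and $p_\infty$ are genuine idempotents in $\CH_n(X\times_S X)$ satisfying partial orthogonality relations, so that an explicit Gram--Schmidt process applies; your lifting-modulo-a-nil-ideal route could also work, but only after the nilpotence is proved. Second, you locate the main obstacle in the local analysis of the cone degeneration and the ramification of $\tilde C\to C$ over $\Delta_2$; under the standing hypothesis that $C=\Delta_1$ is smooth one has $\Delta_2=\emptyset$ and $\tilde C\to C$ is \'etale, and the computation of $\gamma^t\circ\gamma$ is a comparatively short intersection-theoretic argument in a compactified transversal slice. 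The real difficulty is the Chow-group description of $X_C\times_C X_C$, which your proposal does not engage with.
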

   
\begin{rem}   

A similar result has been obtained independently by C.Vial\cite{C.Vial2} using a different approach(he works only with absolute motives). 
In the case of quadric bundle, our result is more precise since it provides an explicit construction of the Prym motive.
  
\end{rem}   
   
The proof is based on a generalization of the techniques of \cite{NS}.
It consists of the following steps :
\begin{itemize}
\item We show that the $p_{2i}$ and $p_{\infty}$ are relative projectors, for $0\leq i\leq 2m-1$.
\item Using the Gramm-Schmidt process we obtain mutually orthogonal relative projectors $p_{2i}^N$ and $p_{\infty}^N$, for $0\leq i\leq 2m-1$.
\item We show that these relative projectors act in the right on the perverse direct images.
\item We verify the identity $\Delta_X=\sum_{i=0}^{2m-1} p_{2i}^N+p_{\infty}^N\in{\CH_n(X\times_S X)}$.
\item We pass to the corresponding absolute projectors and decompose then to obtain an absolute Chow-K\"{u}nneth decomposition of $X$ which satisfies Murre's conjectures I, II and III.
\end{itemize}

The paper is organized as follows. 
In section 2 we prove steps 1,2 and 3 (the proof that $p_{\infty}$ is a relative projector uses the result from the Appendix 5.1).
Section 3 is devoted to the proof of step 4. The idea of the proof is as follows. Put 
$$R=\Delta_X-\sum_{i=0}^{2m-1} p_{2i}^N-p_{\infty}^N\in{\CH_n(X\times_S X)}.$$ Consider the localization exact sequence
$$\CH_n(X_{C}\times_{C}X_{C})\stackrel{(i\times i)_*}{\rightarrow}\CH_n(X\times_S X)\stackrel{(j\times j)^*}{\rightarrow}\CH_n(X_U\times_U X_U)\rightarrow 0.$$
We first show that $R|_{X_U\times_U X_U}=0$. Hence $R$ is the image of an element $T\in{\CH_n(X_{C}\times_{C}X_{C})}$.
We then prove that $R$ is nilpotent using a description of $\CH_n(X_{C}\times_{C}X_{C})$. Hence $R=0$ since it is a projector.
The description of $\CH_n(X_{C}\times_{C}X_{C})$ represents the main technical difficulty of the paper ; in the case of conic bundles this step is easier since  
$\CH_3(X_{C}\times_{C}X_{C})$ is generated by the irreducible components of $X_{C}\times_{C}X_{C}$.
\\

This article is a part a my PhD thesis under the supervision of D.Markushevich and J.Nagel.

\section{Relative Chow-K\"unneth decomposition I}

Let $f:X\rightarrow S$ be a quadric bundle with odd-dimensional fibers of dimension $2m-1$, $X$ and $S$ smooth projective with $dim(S)=2$.
Then $dim(X)=n=2m+1$.
Assume $C=\Delta_1$ is smooth so that $\Delta_2=\emptyset$.

Consider the commutative diagram
$$\xymatrix{X \ar[rr]^i\ar[dr]^f& &\mathbb P(E)\ar[dl]^g\\
&S&}$$

\begin{proposition}
The BBDG decomposition\cite{BBDG} of $Rf_*\mathbb Q$ is given by
$$Rf_*\mathbb Q=\oplus_{j=0}^{2m-1}\mathbb Q_S[-2j]\oplus i_{C*}\mathcal{V}[-2m]\in{\mathcal{D}^b_c(S)}$$
where
$\mathcal{V}=i^{-1}(R^{2m}f_*\mathbb Q)_v$ is 
the cokernel of the sheaf morphism $i^*:R^{2m}g_*\mathbb Q\rightarrow R^{2m}f_*\mathbb Q$
induced by the above diagram.
The sheaf $\mathcal{V}$ is a local system of rank one on $C$.

\end{proposition}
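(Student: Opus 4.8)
The plan is to determine $Rf_*\mathbb{Q}$ by first invoking the decomposition theorem of \cite{BBDG}, which furnishes a splitting $Rf_*\mathbb{Q}\simeq\bigoplus_i {}^pR^if_*\mathbb{Q}[-i]$ into shifts of semisimple perverse sheaves, and then identifying each summand by a fibrewise computation combined with proper base change. I would begin with the generic behaviour over the open locus $U=S\setminus C$, where every fibre $X_s$ is a smooth odd quadric of dimension $2m-1$. Its rational cohomology is $H^{2j}(X_s)=\mathbb{Q}\cdot h^j$ for $0\leq j\leq 2m-1$ and vanishes in odd degrees, all of it generated by the relative hyperplane class $h=i^*H$ coming from $\mathbb{P}(E)$. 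Hence $Rf_*\mathbb{Q}|_U\simeq\bigoplus_{j=0}^{2m-1}\mathbb{Q}_U[-2j]$, so over $U$ each perverse summand is a shift of the trivial rank-one local system, whose intermediate extension contributes a copy of $\mathbb{Q}_S$ (up to shift) to $Rf_*\mathbb{Q}$.

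The heart of the argument is to compute the cohomology of the singular fibres in order to locate the defect concentrated over $C$. Since $\Delta_2=\emptyset$, every fibre $X_c$ with $c\in C$ is a quadric of corank exactly one, that is, the projective cone with a single vertex over a smooth quadric $Q^{2m-2}$. Writing $\bar Q=X_c$ and using that $\bar Q$ minus its vertex retracts onto $Q^{2m-2}$, I would run the long exact sequence of local cohomology at the vertex; its groups are the reduced cohomology of the link, an $S^1$-bundle over $Q^{2m-2}$, computed by a Gysin sequence together with hard Lefschetz on $Q^{2m-2}$. This yields $H^{2j}(\bar Q)=\mathbb{Q}$ for $j\neq m$ with $0\leq j\leq 2m-1$, while $H^{2m}(\bar Q)=\mathbb{Q}^2$, all odd groups vanishing. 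By proper base change, the odd direct images vanish, $R^{2j}f_*\mathbb{Q}$ has constant rank one and is generated by $h^j$ for $j\neq m$, hence equals $\mathbb{Q}_S$, whereas $R^{2m}f_*\mathbb{Q}$ jumps from rank one on $U$ to rank two along $C$.

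It remains to assemble these computations into the stated splitting. For $j\neq m$ the sheaf $R^{2j}f_*\mathbb{Q}=\mathbb{Q}_S$ gives the summand $\mathbb{Q}_S[-2j]$. For $j=m$, the restriction $i^*:R^{2m}g_*\mathbb{Q}=\mathbb{Q}_S\to R^{2m}f_*\mathbb{Q}$ sending $H^m\mapsto h^m$ is an isomorphism over $U$ and is injective with cokernel supported on $C$; by the rank-two computation this cokernel is a rank-one sheaf on $C$, which is exactly $\mathcal{V}$, and it is locally constant because all fibres over $C$ are cones of the same corank-one type. Since $C$ is smooth and $\mathcal{V}$ is a local system, $i_{C*}\mathcal{V}[1]=\mathrm{IC}_C(\mathcal{V})$ is perverse, so after matching perverse degrees the $C$-supported part of the decomposition theorem is precisely this term, giving $R^{2m}f_*\mathbb{Q}=\mathbb{Q}_S\oplus i_{C*}\mathcal{V}$. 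Because each perverse summand is itself a single shifted sheaf, the complex is formal and we obtain $Rf_*\mathbb{Q}\simeq\bigoplus_{j=0}^{2m-1}\mathbb{Q}_S[-2j]\oplus i_{C*}\mathcal{V}[-2m]$. The hypothesis $\Delta_2=\emptyset$ enters twice and is essential: it guarantees both that the singular fibres are of uniform corank-one type, so the defect is locally constant, and that no summand of the decomposition is supported on points of $C$.

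The main obstacle is the middle-degree fibre cohomology over $C$ and, more precisely, the identification of the resulting rank-one summand $i_{C*}\mathcal{V}$ with the cokernel of $i^*$ produced abstractly by the decomposition theorem; controlling the intermediate-extension and potential point-supported terms so that $\mathcal{V}$ is genuinely a rank-one local system on all of $C$ is where the smoothness of $C$ and the vanishing of $\Delta_2$ do the real work. The remaining ingredients --- the projective bundle formula for $g$, proper base change, and the splitting from \cite{BBDG} --- are formal.
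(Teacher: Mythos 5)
Your argument is correct, but it takes a more hands-on route than the paper, whose entire proof is a citation: the decomposition is deduced from the \emph{perverse} Lefschetz hyperplane theorem and the \emph{perverse} hard Lefschetz theorem applied to the relative embedding $X\hookrightarrow\mathbb P(\mathcal E)$ (cf.\ Proposition 8.5.2 of \cite{livre motif}), which identifies all perverse direct images with the ``expected'' summands coming from $\mathbb P(\mathcal E)$ except for a middle-degree variable part supported on the discriminant. You instead take only the bare BBDG splitting $Rf_*\mathbb Q\simeq\oplus_i\,{}^pR^if_*\mathbb Q[-i]$ and identify the summands by explicit fibre cohomology: proper base change, the cohomology of a smooth odd quadric over $U$, and the cohomology of a corank-one cone over $C$ (your local-cohomology/Gysin computation giving $H^{2m}(X_c)=\mathbb Q^2$ and rank one elsewhere is right, as is the injectivity of $i^*$ on stalks via $h^m\neq 0$ and the use of semisimplicity to split off $i_{C*}\mathcal V$ as an intersection complex with no point-supported constituents since $\Delta_2=\emptyset$). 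Your approach buys explicitness and makes visible exactly where the hypotheses $C$ smooth and $\Delta_2=\emptyset$ enter; the paper's approach buys brevity and generality by packaging the same content into a standard structural theorem. The one step you should tighten is the assertion that the cokernel $\mathcal V$ is \emph{locally constant} ``because all fibres over $C$ are cones of the same corank-one type'': constant stalk rank alone does not make a constructible sheaf a local system, so you should either invoke topological local triviality of $X_C\to C$ (Thom--Whitney, using that the corank is constant along $C$), or identify $\mathcal V$ directly with $(\pi_*\mathbb Q)^-$ for the double cover $\tilde C\to C$ parametrizing the rulings, as the paper does in its Appendix. With that justification supplied, the proof is complete.
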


\begin{proof}
This follows from the perverse versions of the Lefschetz hyperplane theorem and the hard Lefschetz theorem ;
cf.\cite[Proposition 8.5.2]{livre motif}
\end{proof}

To obtain a relative Chow-K\"unneth decomposition we need to define a family of mutually orthogonal relative projectors
that induce the above decomposition and add up to the identity.

Let for $0\leq i\leq 2m-1$, $\xi_i\subset{X}$ be a relative linear section of relative dimension $i$ of
$X\hookrightarrow\mathbb P(\mathcal{E})\rightarrow S$.

Consider the relative correspondences $p_{2i}=1/2[\xi_i\times_S\xi_{2m-1-i}]$. By Lieberman's Lemma \cite{livre motif}
we have $p_{2i}=1/2[\xi_{2m-1-i}][\xi_i]^t$.

\begin{prop}

The relative correspondences $p_{2i}$ are projectors and satisfy
\begin{enumerate}[(i)]
\item $p_{2j}p_{2i}=0$ if $j>i$
\item $p_{2j}p_{2i}=0$ if $j<i-2$ 
\end{enumerate}

\end{prop}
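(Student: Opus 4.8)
The plan is to use Lieberman's factorization $p_{2i}=\tfrac12[\xi_{2m-1-i}]\circ[\xi_i]^t$, recorded just before the statement, which expresses each $p_{2i}$ as a composition routed through the base $S$; this reduces every product $p_{2j}\circ p_{2i}$ to a single cycle on the surface $S$ whose dimension is pinned down by $\dim S=2$.

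First I would expand, by associativity of the composition of relative correspondences,
$$p_{2j}\circ p_{2i}=\tfrac14\,[\xi_{2m-1-j}]\circ\big([\xi_j]^t\circ[\xi_{2m-1-i}]\big)\circ[\xi_i]^t ,$$
so that the whole product is controlled by the middle factor $[\xi_j]^t\circ[\xi_{2m-1-i}]\in\CH(S\times_S S)=\CH(S)$. Since $X$ is smooth, this middle factor is $f_*\big(\xi_{2m-1-i}\cdot_X\xi_j\big)$, the pushforward along $f$ of the intersection product in $X$ of the two linear sections. With $\dim X=2m+1$, $\dim\xi_{2m-1-i}=2m+1-i$ and $\dim\xi_j=j+2$, the grading of the intersection product gives $\xi_{2m-1-i}\cdot_X\xi_j\in\CH_{2+j-i}(X)$, hence $[\xi_j]^t\circ[\xi_{2m-1-i}]\in\CH_{2+j-i}(S)$.

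Both vanishing assertions are then dimension counts on $S$, using $\CH_d(S)=0$ for $d<0$ and for $d>2$. For (i), $j>i$ forces $2+j-i\geq 3>\dim S$, so $\CH_{2+j-i}(S)=0$, the middle factor vanishes, and $p_{2j}\circ p_{2i}=0$. For (ii), $j<i-2$ forces $2+j-i\leq -1<0$, so again $\CH_{2+j-i}(S)=0$ and the composition vanishes. The surviving cases are exactly $j\in\{i,i-1,i-2\}$, where $2+j-i\in\{2,1,0\}$; this matches the proposition leaving $j=i-1$ and $j=i-2$ possibly non-zero. The projector property is the case $j=i$ of the same computation: the middle factor $f_*(\xi_i\cdot_X\xi_{2m-1-i})$ records, over the smooth locus, the fiber intersection $h^{(2m-1-i)+i}=h^{2m-1}$, i.e. the degree $2$ of the quadric, so it equals $2[S]\in\CH_2(S)$ (any extra term would be supported on the curve $C$, hence lie in $\CH_2(C)=0$); as $[S]$ acts as the identity correspondence this gives $p_{2i}\circ p_{2i}=\tfrac12[\xi_{2m-1-i}]\circ[\xi_i]^t=p_{2i}$, which also explains the normalization $\tfrac12$.

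I expect the only non-formal point to be the identification of the Corti--Hanamura composition $[\xi_j]^t\circ[\xi_{2m-1-i}]$ with $f_*(\xi_{2m-1-i}\cdot_X\xi_j)$ together with the claim that it carries the asserted grading; this is the place where one must track the refined Gysin map $\delta^!$ in the relative composition and the identification $\CH(S\times_S S)=\CH(S)$. Once this bookkeeping is in place the proposition follows purely from the vanishing of $\CH_d(S)$ outside $d\in\{0,1,2\}$.
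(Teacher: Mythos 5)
Your proposal is correct and follows essentially the same route as the paper: Lieberman's factorization $p_{2i}=\tfrac12[\xi_{2m-1-i}]\circ[\xi_i]^t$, associativity to isolate the middle factor $[\xi_j]^t\circ[\xi_{2m-1-i}]\in\CH_{2+j-i}(S)$, and the vanishing of $\CH_d(S)$ outside $0\le d\le 2$, with the case $j=i$ giving $2[S]$ and hence the projector property. The only difference is that you supply the justification (pushforward of the fiberwise intersection, degree $2$ of the quadric, $\CH_2(S)=\mathbb Q[S]$) for the identity $[\xi_i]^t\circ[\xi_{2m-1-i}]=2[S]$, which the paper asserts without proof.
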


\begin{proof}

We have the composition of relative correspondences
$$p_{2i}=1/2[\xi_{2m-1-i}][\xi_i]^t$$ with 
$[\xi_i]^t\in{\CH_{i+r}(X)=\CH_{i+r}(X\times_S S)=Cor_S(X,S)}$ 
\\and $[\xi_{2m-1-i}]\in{\CH_{n-i}(X)=\CH_{n-i}(S\times_S X)=Cor_S(S,X)}$.

Thus
$$p_{2j}p_{2i}=1/4[\xi_{2m-1-j}][\xi_j]^t[\xi_{2m-1-i}][\xi_i]^t$$

We have $[\xi_j]^t[\xi_{2m-1-i}]\in{\CH_{r+j-i}(S\times_S S)=\CH_{r+j-i}(S)}$.
\\So 
$[\xi_j]^t[\xi_{2m-1-i}]=0$ if $j>i$ or $j<i-r$.
Moreover
$[\xi_j]^t[\xi_{2m-1-i}]=2[S]$ if $i=j$.

The result then follows by the associativity of the composition of relative correspondences.

\end{proof}

Consider the Stein factorization $F_m(X/S)\rightarrow\tilde{C}\rightarrow C$ where $F_m(X/S)$ denotes the relative Fano variety of $m$ linear subspaces and $C=C_1\subset{S}$. It is a non trivial \'etale double covering by the hypotheses of Theorem \ref{main} 
Let $\tau:\tilde{C}\rightarrow\tilde{C}$ be the involution of the double covering. 

Let $\Gamma\subset{F_m(X/S)\times_S X}$ be the incidence correspondence with $p:\Gamma\rightarrow F_m(X/S)$, $q:\Gamma\rightarrow X$ the restriction to $\Gamma$ of the two projections. 

For $W\subset{F_m(X/S)}$ a multisection of degree $d$ of $F_m(X/S)\rightarrow\tilde{C}$ we note $g:W\rightarrow\tilde{C}$ the covering of degree $d$,
$\Gamma_W\subset{W\times_S X}$ the restriction of $\Gamma$ to $W$ and $Z=q(\Gamma_W)\subset{X}$.

We have
$$\tilde{C}\times_S\tilde{C}=D^+\sqcup D^-$$
where $D^+=\Delta_{\tilde{C}}$ and $D^-=\tau^*$.
We will note $\rho=I-\tau_*\in{\Corr_C(\tilde{C},\tilde{C})}$ the prym motive.

\begin{rem}
If the covering $\tilde{C}\rightarrow C$ is trivial $\tilde{C}$ and $W$ have two irreducible and connected components(the covering is non ramified) and $\tilde{C}\times_S\tilde{C}$ has four connected components $D^+_1$, $D^+_2$, $D^-_1$, $D^-_2$.
\end{rem}

The variety
$Z\times_C Z\subset{X\times_S X}$
has two irreducible components
$(Z\times_{C}Z)^{+,-}=(q\times q)(p\times p)^{-1}(g\times g)^{-1}(D^{+,-})\subset{X\times_S X}$.

Note $$\gamma=\Gamma_Wg^*\in{\Corr_C(\tilde{C},X)}.$$

We will note:
$$q_{m,C}^+=\gamma\circ\gamma^t=[(Z\times_{C}Z)^+]\in{\CH_n(X\times_S X)},$$
$$q_{m,C}^-=\gamma\circ\tau_*\circ\gamma^t=[(Z\times_{C}Z)^-]\in{\CH_n(X\times_S X)}.$$

We will consider 
$$p_{\infty}=(-1)^m1/d^2\gamma\circ(I-\tau_*)\circ\gamma^t=(-1)^m1/d^2(q_{m,C}^+-q_{m,C}^-)\in{\CH_n(X\times_S X)}.$$

Note that by construction 
\begin{eqnarray}\label{transposition}
^tp_{\infty}=p_{\infty},\ \ 
^tp_{2i}=p_{4m-2-2i}.
\end{eqnarray}

\begin{prop}
\label{ortho1}
\begin{enumerate}
\item[(i)] $p_{\infty}$ is a relative projector
\item[(ii)] $p_{2i}p_{\infty}=0$ for $i\neq m-2$
\item[(iii)] $p_{\infty}p_{2i}=0$ for $i\neq m+1$.
\end{enumerate}

\end{prop}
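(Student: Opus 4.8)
The plan is to use that $p_{\infty}$ factors through the curve $\tilde{C}$ via $\gamma$, $\gamma^t$ and the Prym idempotent $\rho=I-\tau_*$, so that every composition reduces to an algebraic identity in $\Corr_C(\tilde{C},\tilde{C})$ or to a dimension count in $\CH_*(\tilde{C})$. First, (iii) is a formal consequence of (ii): taking transposes and using \eqref{transposition} (namely ${}^tp_{\infty}=p_{\infty}$ and ${}^tp_{2i}=p_{4m-2-2i}$) gives ${}^t(p_{2i}\circ p_{\infty})=p_{\infty}\circ p_{4m-2-2i}$, so with $j=2m-1-i$ the vanishing of $p_{2i}\circ p_{\infty}$ for $i\neq m-2$ is exactly the vanishing of $p_{\infty}\circ p_{2j}$ for $j\neq m+1$. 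It therefore suffices to prove (i) and (ii).

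For (i) I expand
$$p_{\infty}\circ p_{\infty}=\frac{1}{d^4}\,\gamma\circ\rho\circ(\gamma^t\circ\gamma)\circ\rho\circ\gamma^t,$$
so everything is controlled by the self-correspondence $N:=\gamma^t\circ\gamma$. This is a relative self-correspondence of $\tilde{C}$ of degree zero, so it lies in $\CH_1(\tilde{C}\times_C\tilde{C})$; since $\tilde{C}\times_C\tilde{C}=D^+\sqcup D^-$ with $[D^+]=I$ and $[D^-]=\tau_*$, it must have the form $N=\alpha\,I+\beta\,\tau_*$ for some $\alpha,\beta\in\mathbb{Q}$. The Appendix 5.1 supplies the one genuinely geometric input, computing these coefficients from the intersection theory of the maximal linear subspaces of the corank-one quadric fibres and yielding $\alpha-\beta=(-1)^m d^2/2$. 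Granting this, $\tau_*^2=I$ gives $\rho\circ N\circ\rho=2(\alpha-\beta)\,\rho=(-1)^m d^2\,\rho$, whence $p_{\infty}\circ p_{\infty}=\frac{(-1)^m}{d^2}\,\gamma\circ\rho\circ\gamma^t=p_{\infty}$.

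For (ii) I factor
$$p_{2i}\circ p_{\infty}=\frac{(-1)^m}{2d^2}\,[\xi_{2m-1-i}]\circ\big([\xi_i]^t\circ\gamma\big)\circ\rho\circ\gamma^t$$
and study $\psi:=[\xi_i]^t\circ\gamma$, which is supported over $C$ and so lies in $\Corr_C(\tilde{C},C)=\CH_*(\tilde{C})$. A dimension count for the Corti--Hanamura composition (the refined Gysin along $\Delta_{X_C}$, of codimension $\dim X_C=2m$) places $\psi$ in $\CH_{i+2-m}(\tilde{C})$; concretely $\psi$ records the finitely many $t\in\tilde{C}$ whose $m$-plane $\Lambda_t$ meets $\xi_i$, since $Z\cap\xi_i$ has expected dimension $i+2-m$ in $X$. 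As $\tilde{C}$ is a curve this group vanishes unless $i\in\{m-2,m-1\}$, giving $p_{2i}\circ p_{\infty}=0$ for $i\leq m-3$ and for $i\geq m$. For $i=m-1$ one has $\psi\in\CH_1(\tilde{C})=\mathbb{Q}\cdot[\tilde{C}]$, a multiple of the class of the covering $\tilde{C}\to C$ and hence $\tau$-invariant; thus $\psi\circ\tau_*=\psi$, so $\psi\circ\rho=0$ and $p_{2(m-1)}\circ p_{\infty}=0$. Only the index $i=m-2$, where $\psi\in\CH_0(\tilde{C})$ is a genuine and in general non-$\tau$-invariant $0$-cycle, escapes the argument, in agreement with the statement.

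The main obstacle is the Appendix 5.1 computation underlying (i): writing $\gamma^t\circ\gamma$ as $\alpha I+\beta\tau_*$ and extracting $\alpha-\beta=(-1)^m d^2/2$ requires analysing how two maximal linear subspaces of a corank-one quadric meet through the vertex of the cone, according to whether they belong to the same ruling or to opposite rulings, and it is precisely here that the sign $(-1)^m$ originates. By contrast (ii) and (iii) are soft, resting only on the degree bookkeeping for $\psi$ and on the $\tau$-invariance of top-dimensional cycles on $\tilde{C}$.
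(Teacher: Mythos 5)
Your argument follows the paper's own proof essentially step for step: part (i) is reduced to the Appendix computation of $\gamma^t\circ\gamma$ as a combination of $I$ and $\tau_*$, part (ii) to the observation that $[\xi_i]^t\circ\gamma$ lies in $\CH_{i+2-m}(\tilde{C})$ (hence vanishes unless $i\in\{m-2,m-1\}$, and for $i=m-1$ is a $\tau$-invariant multiple of $[\tilde{C}]$, which is killed by $\rho$), and part (iii) to transposition via $^tp_{\infty}=p_{\infty}$ and $^tp_{2i}=p_{4m-2-2i}$. The one point of divergence is the constant you attribute to the Appendix: the paper proves $\gamma^t\circ\gamma=(-1)^m d^2(I-\tau_*)$, i.e.\ $\alpha-\beta=2(-1)^m d^2$ in your notation rather than $(-1)^m d^2/2$, and feeding that value into your (correct) identity $\rho\circ N\circ\rho=2(\alpha-\beta)\rho$ together with the normalization $p_{\infty}=(-1)^m d^{-2}\,\gamma\circ\rho\circ\gamma^t$ yields $p_{\infty}\circ p_{\infty}=4\,p_{\infty}$, so a factor must be adjusted somewhere; this is a slip already present in the paper itself, whose proof of (i) silently replaces $(I-\tau_*)^3$ by $(I-\tau_*)$, and your choice of constant is the one that makes the projector identity come out, but it is not what the Appendix as written establishes.
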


\begin{proof}

Part (i) follows by the Appendix \ref{app}(1) :

$$p_{\infty}p_{\infty}=(1/d^2)^2\gamma\circ(I-\tau_*)\circ\gamma^t\circ\gamma\circ(I-\tau_*)\circ\gamma^t
=(-1)^m 1/d^2\gamma\circ(I-\tau_*)\circ\gamma^t=p_{\infty}$$
since $\gamma^t\circ\gamma=(-1)^m(I-\tau_*)\in{\CH_1(\tilde{C}\times_C\tilde{C})}$.

For the proof of part (ii), note that $p_{2i}p_{\infty}=0$ if $i\neq m-1$, $i\neq m-2$ for dimensional reasons.
Specifically, $p_{2i}q_{m,C}^+=p_{2i}q_{m,C}^-=0$ if $i\neq m-1$, $i\neq m-2$ since
$$p_{2i}q_{m,C}^+=1/2[\xi_{2m-1-i}]([\xi_i]^t\gamma)\gamma^t$$
and $[\xi_i]^t\gamma\in{\CH_{i-m+2}(\tilde{C}\times_S S)=\CH_{i-m+2}(\tilde{C})}$.

For $i=m-1$ we have $[\xi_i]^t\gamma=\lambda[\tilde{C}]$ with $\lambda\in{\mathbb Z}$, hence $p_{2m-2}p_{\infty}=0$ since
$\tau^*[\tilde{C}]=[\tilde{C}]$.

Part (iii) follows from part (ii) by transposition using (\ref{transposition}).

\end{proof}

We now use the Gramm-Schmidt orthonormalization process :

\begin{lemme}\cite[Lemma 4.11]{C.Vial}
\label{GS}

Let $V$ be a $\mathbb Q$-algebra and $k$ a positive integer. Let $p_0$,....,$p_n$ idempotents of $V$ which satisfy $p_jp_i=0$ for $j>i$. Then
$$\tilde{p_i}=(1-1/2p_0)...(1-1/2p_{i-1})p_i(1-1/2p_{i+1})...(1-1/2p_n)$$ define idempotents so that 
$\tilde{p_j}\tilde{p_i}=0$ for $j>i$ and $j=i-1$.
Moreover if we apply this process $r$ times we obtain
idempotents $p^N_i$ which satisfy $p^{N}_jp^{N}_i=0$ for $j>i$, and $j<i-r$.

\end{lemme}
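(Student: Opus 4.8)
The statement is purely algebraic, so the plan is to prove everything by direct manipulation in the $\mathbb{Q}$-algebra $V$, using only the hypothesis recast as $p_ap_b=0$ whenever $a>b$ (i.e. $p_jp_i=0$ for $j>i$ together with $p_ip_k=0$ for $k<i$) and $p_i^2=p_i$. Write $A_i=\prod_{k<i}(1-\tfrac12 p_k)$ and $B_i=\prod_{k>i}(1-\tfrac12 p_k)$, so that $\tilde p_i=A_ip_iB_i$. First I would record the elementary absorption identities $p_iA_i=p_i$ and $B_ip_i=p_i$, which are immediate since each factor $1-\tfrac12 p_k$ of $A_i$ meets $p_i$ through a product $p_ip_k$ ($k<i$) that vanishes, and symmetrically for $B_i$. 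A slightly less trivial but equally mechanical step is $p_iB_iA_i=p_iB_i$ and $p_iB_ip_i=p_i$: expanding the products into monomials in the $p_k$, every monomial that actually uses a factor of $A_i$ (resp.\ a nontrivial factor of $B_i$, once $p_i$ is appended on the right) contains an adjacent descending product $p_ap_b$ with $a>b$, hence vanishes.

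These identities make idempotency automatic:
\[ \tilde p_i^{\,2}=A_i\,(p_iB_iA_ip_i)\,B_i=A_i\,(p_iB_ip_i)\,B_i=A_ip_iB_i=\tilde p_i. \]
The same mechanism handles the strictly upper relations: for $j>i$ one reduces $\tilde p_j\tilde p_i$ to its middle factor $p_jB_jA_ip_i$, uses $p_jB_jA_i=p_jB_j$ (all indices in $A_i$ are $<i<j$), and then $p_jB_jp_i=0$, because the leading term $p_jp_i$ and every other monomial end in a descending product.

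The heart of the lemma is the new subdiagonal relation $\tilde p_{i-1}\tilde p_i=0$, and this is where I expect the only genuine computation. Here the factor $1-\tfrac12 p_i$ sitting inside $B_{i-1}$ must interact with the trailing $p_i$ of $\tilde p_i$. Writing $B_{i-1}=(1-\tfrac12 p_i)B_i$ and $A_i=A_{i-1}(1-\tfrac12 p_{i-1})$, I would reduce the middle factor to
\[ p_{i-1}B_{i-1}A_ip_i=p_{i-1}B_{i-1}(1-\tfrac12 p_{i-1})p_i=p_{i-1}B_{i-1}p_i-\tfrac12\,p_{i-1}p_i, \]
using $p_{i-1}B_{i-1}A_{i-1}=p_{i-1}B_{i-1}$ and $p_{i-1}B_{i-1}p_{i-1}=p_{i-1}$; then $p_{i-1}B_{i-1}p_i=p_{i-1}(1-\tfrac12 p_i)p_i=\tfrac12 p_{i-1}p_i$, so the two contributions cancel exactly. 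This cancellation is precisely what fixes the coefficient $\tfrac12$ in the definition of $\tilde p_i$, and is the structural reason the construction works.

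Finally, for the iteration I would prove a strengthened one-step statement by induction on the number $s$ of subdiagonals already cleared: if, in addition to $p_jp_i=0$ for $j>i$, one already has $p_jp_i=0$ for $i-s\le j\le i-1$, then the same construction yields idempotents with $\tilde p_j\tilde p_i=0$ for $j>i$ and for $i-s-1\le j\le i-1$ (the base case $s=0$ being the first part). The point is that the extra vanishing kills still more monomials, so the cancellation of the previous paragraph reaches one subdiagonal further while the already-cleared ones are preserved. Applying this $r$ times clears the first $r$ subdiagonals, leaving $p^N_jp^N_i=0$ whenever $j>i$ or $i-r\le j\le i-1$; combined with any pre-existing vanishing far below the diagonal (as for the $p_{2i}$ in the theorem) this produces the required mutually orthogonal family. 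The main obstacle throughout is bookkeeping: tracking, in the expanded monomials, exactly which descending pair forces each term to die, and verifying in the induction that no previously cleared subdiagonal is revived.
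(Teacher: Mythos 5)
The paper itself contains no proof of this lemma --- it is quoted from Vial --- so there is nothing internal to compare against and I am assessing your argument directly. Everything up to and including your strengthened one-step statement is correct: the absorption identities $p_iA_i=p_i$, $B_ip_i=p_i$, $p_iB_iA_i=p_iB_i$ do follow from the monomial analysis you describe, idempotency and $\tilde p_j\tilde p_i=0$ for $j>i$ drop out, your first-subdiagonal computation $p_{i-1}B_{i-1}A_ip_i=\tfrac12p_{i-1}p_i-\tfrac12p_{i-1}p_i=0$ is exactly the right cancellation, and the induction (if subdiagonals $1,\dots,s$ already vanish, one pass clears subdiagonal $s+1$ while preserving $1,\dots,s$) is valid. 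This gives $p^N_jp^N_i=0$ for $j>i$ and $i-r\le j\le i-1$ after $r$ passes, which is surely the intended reading of the garbled ``$j<i-r$'' in the statement.

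The gap is your last sentence: the process does \emph{not} preserve ``pre-existing vanishing far below the diagonal'' in an abstract $\mathbb Q$-algebra, so you cannot simply combine the cleared band with the old vanishing to conclude orthogonality. Take idempotents with $q_aq_b=0$ unless $b-2\le a\le b$ (exactly the situation of the $p_{2i}$ before any pass). Expanding $q_{i-3}B_{i-3}A_iq_i$ into monomials and collecting coefficients gives
$$q_{i-3}B_{i-3}A_iq_i=-\tfrac14\left(q_{i-3}q_{i-2}q_i+q_{i-3}q_{i-1}q_i-q_{i-3}q_{i-2}q_{i-1}q_i\right),$$
and none of these products is killed by the hypotheses; so $\tilde q_{i-3}\tilde q_i$ need not vanish even though $q_{i-3}q_i=0$, and two passes do not formally orthogonalize a family whose only nonzero subdiagonals are the first two. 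What rescues the application in the paper is a property special to the projectors at hand: any composition $p_{2a_1}\cdots p_{2a_k}$ factors through $\CH^{a_k-a_1}(S)$, which vanishes once $a_k-a_1\ge 3$ because $\dim S=2$, and this kills every monomial occurring in $\tilde p_{2j}\tilde p_{2i}$ for $j<i-2$ (and likewise in the second pass). You need to either inject this hypothesis explicitly (e.g.\ assume all products of total index span greater than $r$ vanish) or restrict the ``moreover'' claim to the band $i-r\le j\le i-1$ that your induction actually controls.
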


Applying the Lemma two times to the family $\{p_{2i}\}_{0\leq i\leq 2m-1} $
we obtain idempotents $\{\tilde{p}_{2i}\}_{0\leq i\leq 2m-1}$ such that $\tilde{p}_{2i}\tilde{p}_{2j}=0$ for $i\neq j$. 
These projectors satisfy again :

\begin{prop}
\label{ortho2}
\begin{enumerate}
\item[(i)] $\tilde{p_{2i}}p_{\infty}=0$ for $i\neq m-2$.
\item[(ii)] $p_{\infty}\tilde{p_{2i}}=0$ for $i\neq m+1$.
\end{enumerate}

\end{prop}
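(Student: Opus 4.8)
The plan is to reduce (ii) to (i) by transposition, and to prove (i) by expanding $\tilde{p}_{2i}$ into monomials in the $p_{2k}$ and running the composition with $p_\infty$ through the double cover $\tilde{C}$, in the same spirit as the proof of Proposition \ref{ortho1}. I would first dispose of (ii). The Gram--Schmidt formula of Lemma \ref{GS} is compatible with transposition: since $^tp_{2k}=p_{2(2m-1-k)}$ by (\ref{transposition}) and $^t(\,\cdot\,)$ reverses the order of composition, the left and right products defining $\tilde{p}_{2i}$ are interchanged and re-indexed by $k\mapsto 2m-1-k$, whence $^t\tilde{p}_{2i}=\tilde{p}_{2(2m-1-i)}$ (this persists through the second iteration). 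Therefore $^t(\tilde{p}_{2i}\circ p_\infty)=p_\infty\circ\tilde{p}_{2(2m-1-i)}$, and transposing the statement of (i) turns the vanishing for $i\neq m-2$ into the vanishing of $p_\infty\tilde{p}_{2j}$ for $j\neq m+1$, which is exactly (ii). So it remains to prove (i).

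For (i) I would write $\tilde{p}_{2i}\circ p_\infty=(-1)^m d^{-2}\,(\tilde{p}_{2i}\circ\gamma)\circ(I-\tau_*)\circ\gamma^t$, so that it suffices to analyse $\tilde{p}_{2i}\circ\gamma\in\Corr_C(\tilde{C},X)$. Expanding $\tilde{p}_{2i}$ as a sum of monomials $p_{2k_1}\cdots p_{2k_s}$ (which is all the Gram--Schmidt process produces), each nonzero monomial, after composition with $\gamma$, carries the rightmost factor $[\xi_{k_s}]^t\gamma\in\CH_{k_s-m+2}(\tilde{C})$; as in Proposition \ref{ortho1} this lies on a curve and hence vanishes unless $k_s\in\{m-1,m-2\}$. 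Two families of terms can therefore survive, and I would kill them by two different mechanisms. The terms ending in $p_{2(m-1)}$ are treated exactly as in Proposition \ref{ortho1}: there $[\xi_{m-1}]^t\gamma=\lambda[\tilde{C}]$ is a multiple of the $\tau$-invariant fundamental class, so it is annihilated by the factor $(I-\tau_*)$ on the right.

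The terms ending in $p_{2(m-2)}$ are the crux. The key observation is that every monomial of $\tilde{p}_{2i}$ retains the central factor $p_{2i}$, so for $i\neq m-2$ no surviving monomial can be $p_{2(m-2)}$ alone; it has length $\geq 2$ and, after using $p_{2(m-2)}^2=p_{2(m-2)}$, the factor immediately preceding the trailing $p_{2(m-2)}$ is some $p_{2k}$ with $k\neq m-2$. The band relations $p_{2a}p_{2b}=0$ for $a>b$ and for $a<b-2$ then force $k\in\{m-3,m-4\}$, in particular $k<m-2$. The corresponding sub-composition contains the class $[\xi_k]^t[\xi_{m+1}]\in\CH_{k-m+4}(S)$, which, composed over $S$ with the zero-cycle $[\xi_{m-2}]^t\gamma\in\CH_0(\tilde{C})$, lands in $\CH_{k-m+2}(\tilde{C})$; since $k<m-2$ this group is in negative dimension and the term vanishes. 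Combining the two cases yields $\tilde{p}_{2i}\circ p_\infty=0$ for $i\neq m-2$.

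I expect this last dimension count to be the main obstacle. It is the only place where $\dim S=2$ enters essentially, and one must check carefully that any chain of $p_{2k}$'s joining an index $i\neq m-2$ to $m-2$ necessarily acquires an intermediate class on $S$ of dimension $<\dim S$ that, once composed over $S$ with the zero-cycle $[\xi_{m-2}]^t\gamma$ on the curve $\tilde{C}$, is pushed into negative dimension. This is precisely the bookkeeping that trivialises over a base of dimension $\leq 1$ and that makes the analogous step of \cite{NS} easier.
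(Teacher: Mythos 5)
Your argument is correct and follows essentially the same route as the paper: part (ii) by transposition via (\ref{transposition}), and part (i) by expanding the Gram--Schmidt monomials, disposing of those ending in $p_{2k}$ with $k\neq m-2$ via Proposition \ref{ortho1} (equivalently, the $\tau$-invariance of $[\xi_{m-1}]^t\gamma$), and killing the residual monomials ending in $p_{2(m-2)}$ by the dimension count $[\xi_{k}]^t[\xi_{m+1}][\xi_{m-2}]^t\gamma\in\CH_{k-m+2}(\tilde{C})=0$ for $k<m-2$. The paper writes out only the case $k=m-3$ (the product $p_{2m-6}p_{2m-4}p_{\infty}$), so your explicit treatment of $k=m-4$ is a slightly more complete bookkeeping of the same computation.
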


\begin{proof}

Part (i) follows from Proposition 2.4 and by noticing that $p_{2m-6}p_{2m-4}p_{\infty}=0$ for dimensional reasons.
More precisely, $p_{2m-6}p_{2m-4}q_{m,C}^+=p_{2m-6}p_{2m-4}q_{m,C}^-=0$ since
$$p_{2m-6}p_{2m-4}q_{m,C}^{+}=1/2[\xi_{m+2}]([\xi_{m-3}]^t[\xi_{m+1}][\xi_{m-2}]^t\gamma)\gamma^t$$
and $[\xi_{m-3}]^t[\xi_{m+1}][\xi_{m-2}]^t\gamma\in{\CH_{-1}(\tilde{C}\times_S S)=\CH_{-1}(\tilde{C})}=0$.

Part (ii) follows from part (i) by transposition using (\ref{transposition}).

\end{proof}

Using Proposition \ref{ortho2}, we can apply Lemma \ref{GS} to the family $(\{\tilde{p_{2i}}\}_{0\leq i\leq 2m-1},p_{\infty})$. 
Following this process we define :

\begin{itemize}
\item For $0\leq i\leq 2m-1$, $i\neq m-2$, $i\neq m+1$ : $p_{2i}^N=\tilde{p}_{2i}\in{\CH_n(X\times_S X)}$
\item $p_{\infty}^N=p_{\infty}-1/2p_{\infty}\tilde{p}_{2m+2}-1/2\tilde{p}_{2m-4}p_{\infty}
=p_{\infty}-1/2p_{\infty}p_{2m+2}-1/2p_{2m-4}p_{\infty}\in{\CH_n(X\times_S X)}$
\item $p_{2m-4}^N=\tilde{p}_{2m-4}-1/2\tilde{p}_{2m-4}p_{\infty}=
\tilde{p}_{2m-4}-1/2p_{2m-4}p_{\infty}\in{\CH_n(X\times_S X)}$
\item $p_{2m+2}^N=\tilde{p}_{2m+2}-1/2p_{\infty}\tilde{p}_{2m+2}=
\tilde{p}_{2m+2}-1/2p_{\infty}p_{2m+2}\in{\CH_n(X\times_S X)}$.
\end{itemize}

The $p_{2i}^N$, $0\leq i\leq 2m-1$, and $p_{\infty}^N$ constitute then an orthogonal family of idempotents.

We now look at the action of these projectors on the higher direct images.





\begin{prop}
\label{act}
\begin{enumerate}[(i)]

\item For $0\leq i\leq 2m-1$ and $0\leq j\leq 2m-1$, $j\neq m$
$p_{2i,*}R^{2j}f_*{\mathbb Q}=\delta_{i,j}R^{2j}f_*\mathbb Q$

\item For $j=m$ 
$p_{2i,*}i^*R^{2m}g_*\mathbb Q=\delta_{i,m}i^*R^{2m}g_*\mathbb Q$

\item 
$p_{\infty,*}((R^{2m}f_*\mathbb Q)_v)=I_{(R^{2m}f_*\mathbb Q)_v)}$.

\end{enumerate}
\end{prop}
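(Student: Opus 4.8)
The plan is to treat the two families of projectors separately: parts (i) and (ii) concern the ``Lefschetz'' projectors $p_{2i}$, whose action is governed entirely by the intersection theory of a smooth quadric, while part (iii) concerns $p_\infty$, whose action encodes the identification of the vanishing cohomology with the Prym local system of the double cover $\tilde{C}\to C$. The latter is the genuine difficulty.

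For (i) and (ii), I would first reduce to a fibrewise computation. Since $p_{2i}=\tfrac12[\xi_{2m-1-i}][\xi_i]^t$ is a relative correspondence of degree $0$, its action $p_{2i,*}$ preserves each $R^{2j}f_*\mathbb{Q}$, and by proper base change its stalk at a general $s\in U=S\setminus C$ is the action of the fibrewise correspondence on $H^{2j}(X_s)$, with $X_s=Q^{2m-1}$ a smooth quadric. On such a fibre $[\xi_i]$ and $[\xi_{2m-1-i}]$ restrict to $h^{2m-1-i}$ and $h^i$, where $h$ is the hyperplane class, and one has $H^{2j}(Q^{2m-1})=\mathbb{Q}\,h^j$ for every $0\le j\le 2m-1$ together with $h^a\cup h^{2m-1-a}=2[\mathrm{pt}]$ coming from $\deg Q=2$. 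A direct computation gives $p_{2i,*}(h^j)=\tfrac12\big(\int_{Q}h^j\cup h^{2m-1-i}\big)\,h^i=\delta_{i,j}\,h^i$, so $p_{2i,*}$ is $\delta_{i,j}$ times the identity on the stalk. To globalize I would use the BBDG decomposition of $Rf_*\mathbb{Q}$: for $j\neq m$ the sheaf $R^{2j}f_*\mathbb{Q}=\mathbb{Q}_S$ is constant on the connected base $S$, so a sheaf endomorphism is determined by its value at one point, and the generic computation yields $p_{2i,*}=\delta_{i,j}\,\mathrm{Id}$ on all of $R^{2j}f_*\mathbb{Q}$, which is (i). For $j=m$ the sheaf $R^{2m}f_*\mathbb{Q}$ is no longer constant, but the ambient part $i^*R^{2m}g_*\mathbb{Q}$ is exactly the constant sub-local-system $\mathbb{Q}_S\,h^m$ obtained by restricting the hyperplane power from $\mathbb{P}(E)$; applying the same fibrewise computation to the generator $h^m$ and restricting to this sub-system gives $p_{2i,*}\,(i^*R^{2m}g_*\mathbb{Q})=\delta_{i,m}\,(i^*R^{2m}g_*\mathbb{Q})$, which is (ii). The only things to verify are that $[\xi_i]$ restricts to a power of $h$ and that $h^j$ generates $H^{2j}$ rationally even for $j\ge m$ (where it is a nonzero multiple of the class of a maximal linear subspace); both are standard for quadrics.

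Part (iii) is the heart of the statement. The idea is that over $C$ the fibre $X_c$ is a rank-$2m$ quadric, i.e. the projective cone over a smooth even quadric $Q^{2m-2}$, whose middle cohomology $H^{2m-2}(Q^{2m-2})$ is two-dimensional and spanned by the classes of the two rulings by maximal linear subspaces. The étale double cover $\tilde{C}\to C$ parametrizes these two rulings, the involution $\tau$ interchanges them, and the vanishing system $\mathcal{V}=(R^{2m}f_*\mathbb{Q})_v$ is canonically the anti-invariant (Prym) part $\mathrm{im}(I-\tau_*)\subset g_*\mathbb{Q}_{\tilde{C}}$, realized as the difference of the two ruling classes sitting in $H^{2m}(X_c)$. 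First I would make this identification precise through the cone structure and the vanishing-cycle description, then show that the correspondence $\gamma=\Gamma_Wg^*\in\Corr_C(\tilde{C},X)$ realizes it: $\gamma_*$ sends a ruling to the class of the corresponding family of linear subspaces in the fibre, and $\gamma^t_*$ is its adjoint for the intersection pairing.

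With this in hand, I would compute $p_{\infty,*}=(-1)^m\tfrac{1}{d^2}\,\gamma_*\circ(I-\tau_*)\circ\gamma^t_*$ on $\mathcal{V}$. Since $\mathcal{V}$ is a rank-one local system and $p_\infty$ is a relative projector by Proposition~\ref{ortho1}(i), the endomorphism $p_{\infty,*}|_{\mathcal{V}}$ is an idempotent of a line, hence multiplication by $0$ or $1$, so it suffices to show it is nonzero. This follows from the cohomological form of the relation $\gamma^t\circ\gamma=(-1)^m(I-\tau_*)$ of the Appendix~\ref{app}(1) together with the fact that $(I-\tau_*)$ is, up to the factor $2$, the projector onto the anti-invariant part of $g_*\mathbb{Q}_{\tilde{C}}$ (using $\tau_*^2=I$): these force $\gamma_*$ and $\gamma^t_*$ to restrict to nonzero maps between $\mathcal{V}$ and the anti-invariant part, so their composite does not annihilate $\mathcal{V}$, and tracking the constants then pins the scalar to $1$. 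I expect the main obstacle to lie precisely here: establishing the canonical identification of the rank-one vanishing local system $\mathcal{V}$ with the Prym part of the double cover through the degeneration geometry of the quadric cone, and then making sure the numerical factors (the degree $d$ of the multisection, the factor $2$ from the double cover, and the sign $(-1)^m$) cancel exactly. This is where the explicit analysis of the cone, its two rulings, and the Appendix computation of $\gamma^t\circ\gamma$ are indispensable.
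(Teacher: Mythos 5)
Your proposal follows the same route as the paper: for (i) and (ii) it reduces to the constant sheaf $\mathbb{Q}_S$ and checks the action fibrewise on a smooth quadric (your explicit computation $p_{2i,*}(h^j)=\tfrac12\deg(h^j\cup h^{2m-1-i})\,h^i=\delta_{i,j}h^i$ is exactly what the paper leaves implicit), and for (iii) it uses precisely the two Appendix facts the paper invokes, namely the isomorphism $\gamma_*:(\pi_*\mathbb Q)^-\tilde{\rightarrow}(R^{2m}f_*\mathbb Q)_v$ and the relation $\gamma^t\circ\gamma=(-1)^m d^2(I-\tau_*)$, to compute $q^{\pm}_{m,C,*}$ on a section $\alpha=\gamma_*\beta$. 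Your extra observation that an idempotent endomorphism of the rank-one system $\mathcal{V}$ must be $0$ or $1$, so only nonvanishing needs to be checked, is a harmless shortcut but does not change the argument.
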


\begin{proof}

As for $j\neq m$ $R^{2j}f_*\mathbb Q=i^*R^{2j}g_*\mathbb Q={\mathbb Q}_S$ a constant sheaf,
it is enough to show that $p_{2i,*}(R^{2j}f_*\mathbb Q)_{|U})=\delta_{i,j}I_{(R^{2j}f_*Q)_{|U}}$ 
with $U=S\backslash C$ the open subset of $S$ over which $f$ is smooth.
We see this immediately by the action on the smooth fibers.
This proves (i).

The same technique proves (ii).

For (iii), let $V\subset{S}$ be an open subset(with $V\cap C\neq\emptyset$) and $\alpha\in{\Gamma(V,(R^{2m}f_*\mathbb Q)_v)}$.
By the Appendix \ref{app}(2), $\gamma_*$ induces an isomorphism of sheaves on $S$
$$\gamma_*:(\pi_*\mathbb Q)^-\tilde{\rightarrow}(R^{2m}f_*\mathbb Q)_v$$
So let $\beta\in{\Gamma(V,(\pi_*\mathbb Q)^-)}$ such that $\alpha=\gamma_*\beta$.
We have then by the Appendix \ref{app}(1)
$$q_{m,C,*}^+\alpha=\gamma\gamma^t_*\gamma_*\beta=\gamma_*d^2(I-\tau_*)\beta=\gamma d^2(2\beta)=2d^2\alpha.$$
We have in the same way
$$q_{m,C,*}^-=\gamma_*\tau_*d^2(I-\tau_*)\beta=\gamma_*d^2(I-\tau_*)\beta=\gamma_*d^2(-2\beta)=-2d^2\alpha.$$

\end{proof}

\begin{prop}
\label{acts}
\begin{enumerate}[(i)]
\item $p_{\infty,*}^N(R^{2m}f_*\mathbb Q)_v=I_{(R^{2m}f_*\mathbb Q)_v}$
\item $p_{2i,*}^Ni^*R^{2j}g^*\mathbb Q=\delta_{i,j}i^*R^{2j}g_*\mathbb Q$
\item $p_{2i,*}^N(R^{2m}f_*\mathbb Q)_v=0$
\item $p_{\infty,*}^Ni^*R^{2j}g^*\mathbb Q=0$.

\end{enumerate}
\end{prop}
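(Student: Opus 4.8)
The plan is to use that $q\mapsto q_*$, sending a relative self-correspondence to its action on the perverse direct images, is a ring homomorphism (composition of correspondences becomes composition of operators, \cite{Corti-Hanamura}). Since $p_\infty^N$, $p_{2m-4}^N$, $p_{2m+2}^N$ and every $\tilde p_{2i}$ are explicit polynomials in the original projectors $p_{2i}$ and $p_\infty$, their actions are completely determined by Proposition \ref{act}. Write $I_j:=i^*R^{2j}g_*\mathbb Q$ for the invariant summand (so $I_j=R^{2j}f_*\mathbb Q$ for $j\neq m$, while $R^{2m}f_*\mathbb Q=I_m\oplus V$) and $V:=(R^{2m}f_*\mathbb Q)_v$ for the Prym summand.

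Two inputs are immediate from Proposition \ref{act}. On each $I_j$ the operators $p_{2k,*}$ are the mutually commuting scalars $\delta_{k,j}\,\mathrm{id}$, so substituting $p_{2k}\mapsto\delta_{k,j}$ into the twice-iterated Gram--Schmidt formula of Lemma \ref{GS} gives $\tilde p_{2i,*}|_{I_j}=\delta_{i,j}\,\mathrm{id}$: the orthonormalization does not disturb the action on the invariant part. Secondly, for $\alpha\in V$ we may write $\alpha=p_{\infty,*}\alpha$ by Proposition \ref{act}(iii), so $p_{2k,*}|_V=p_{2k,*}p_{\infty,*}|_V=(p_{2k}p_\infty)_*|_V=0$ for every $k\neq m-2$, using Proposition \ref{ortho1}(ii).

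The technical heart is to dispose of the two cross-interactions that the orthonormalization is built to repair, namely $p_{2m-4}p_\infty$ and $p_\infty p_{2m+2}$. First I would show $p_{\infty,*}|_{I_j}=0$ for all $j$: since $p_\infty=(-1)^m d^{-2}\gamma\circ(I-\tau_*)\circ\gamma^t$ factors through $\gamma^t$, it suffices that $\gamma^t_*$ annihilates every invariant summand, which holds because $\gamma^t_*$ is supported in degree $2m$ (by the degree of $\gamma$) and is there adjoint to the isomorphism $\gamma_*\colon(\pi_*\mathbb Q)^-\xrightarrow{\sim}V$ of the Appendix, hence kills $I_m$. Consequently $(p_\infty p_{2m+2})_*=p_{\infty,*}p_{2m+2,*}$ vanishes on every summand (on $I_j$ because $p_{2m+2,*}|_{I_j}=\delta_{j,m+1}\mathrm{id}$ followed by $p_{\infty,*}|_{I_{m+1}}=0$, and on $V$ because $p_{2m+2,*}|_V=0$). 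By transposition $^t(p_\infty p_{2m+2})=p_{2m-4}p_\infty$, together with the Poincar\'e self-duality of the Prym summand $V$, the cycle $p_{2m-4}p_\infty$ is then homologically trivial as well; in particular $p_{2m-4,*}|_V=(p_{2m-4}p_\infty)_*|_V=0$. Combined with the previous step this gives $p_{2k,*}|_V=0$ for all $k$, whence $\tilde p_{2i,*}|_V=0$ by the same Gram--Schmidt substitution. Feeding these vanishings into the defining formulas, the correction terms in $p_\infty^N$, $p_{2m-4}^N$ and $p_{2m+2}^N$ all act as zero on perverse cohomology, so $p_{\infty,*}^N=p_{\infty,*}$ and $p_{2i,*}^N=\tilde p_{2i,*}$; statements (i)--(iv) then read off directly from the two inputs above.

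I expect the main obstacle to be precisely the pair of cross-interactions: establishing that $p_\infty$ acts by zero on the invariant summands $I_j$ (equivalently that $\gamma^t_*$ detects only the vanishing part) and, dually, that $p_{2m-4}$ acts by zero on $V$. These are the only places where orthogonality with $p_\infty$ fails at the level of cycles, and they are the only steps that genuinely use the geometry of the double cover through the Appendix isomorphism $\gamma_*\colon(\pi_*\mathbb Q)^-\xrightarrow{\sim}V$ and the transposition symmetry $^tp_\infty=p_\infty$, $^tp_{2i}=p_{4m-2-2i}$; everything else is formal once Proposition \ref{act} and the ring-homomorphism property are in hand.
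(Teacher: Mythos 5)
Your overall strategy---reduce everything to the action of the un-normalized projectors via the ring-homomorphism property and then substitute into the Gram--Schmidt formulas---is sound, and your handling of the invariant summands and of $p_{2k,*}|_V=0$ for $k\neq m-2$ is exactly what is needed. But you take a genuinely different, and harder, route than the paper on the two cross-terms, and both of the steps you yourself call the ``technical heart'' are under-justified as written. First, the claim that $\gamma^t_*$ kills $I_m$ does not follow from its being ``adjoint to the isomorphism $\gamma_*\colon(\pi_*\mathbb Q)^-\to V$'': adjunction only controls the pairing of $I_m$ against the image of $\gamma_*$ on the \emph{full} sheaf $\pi_*\mathbb Q$, and $\gamma_*$ of the invariant part of $\pi_*\mathbb Q$ has a nonzero component in $I_m$ (it is a multiple of the hyperplane-section class plus a piece of $V$). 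What actually makes $\gamma^t_*$ vanish on $I_m$ is the intersection computation of the Appendix, $\Lambda_{t_i}\cdot(\Lambda_{t_i}+\Lambda_{u_j})=0$ in $\bar{X_T}$; you would need to invoke that, not the isomorphism statement. Second, from $(p_\infty p_{2m+2})_*=0$ you conclude that its transpose $p_{2m-4}p_\infty$ is ``homologically trivial''; that is both stronger than what follows and not what you need. The usable statement is that the action of a transposed correspondence on the perverse direct images is Verdier-dual to that of the original, so ``acts as zero'' passes to transposes; as written this step is a gap.

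The paper avoids both difficulties. For (i) it observes that $\omega=-\tfrac12 p_\infty p_{2m+2}-\tfrac12 p_{2m-4}p_\infty$ satisfies $\omega^2=0$ (the cross products vanish for dimensional reasons and by Proposition \ref{ortho1}), and since $(R^{2m}f_*\mathbb Q)_v$ is a rank-one local system the induced endomorphism is a scalar of square zero, hence zero---no computation of $\gamma^t_*$ on $I_m$ and no duality argument is required. For (ii) the correction term $-\tfrac12 p_{2m-4}p_\infty$ is killed on the invariant summands simply because $p_\infty$ is supported over $C$, so its action on a constant sheaf on $S$ vanishes on the dense open $U$ and hence everywhere. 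Finally, (iii) and (iv) are purely formal consequences of (i), (ii) and the orthogonality of the normalized projectors (e.g.\ $p^N_{\infty,*}\alpha=p^N_{\infty,*}p^N_{2j,*}\alpha=(p^N_{\infty}p^N_{2j})_*\alpha=0$ for $\alpha$ in the $j$-th invariant summand), so one never needs $p_{\infty,*}$ itself to kill $I_m$. If you repair the two steps above your argument does go through, but the nilpotence-plus-rank-one trick and the support argument are shorter and require strictly less geometric input.
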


\begin{proof}

We have $p_{\infty}^N=p_{\infty}+\omega\in{\CH_n(X\times_S X)}$
with $\omega=-1/2p_{\infty}p_{2m+2}-1/2p_{2m-4}p_{\infty}$.
Since $p_{2m-4}p_{\infty}p_{2m+2}=0$ for dimensional reasons and 
$$p_{2m-4}p_{\infty}p_{2m-4}=p_{2m+2}p_{\infty}p_{2m+2}=0$$ by Proposition \ref{ortho1}, we obtain
$\omega^2=0$. Hence, $\omega_*(R^{2m}f_*\mathbb Q)_v=0$ since it is a local system of rank one on $C$.

By Proposition \ref{act}, $\tilde{p_{2i}}=\delta_{ij}I_{i^*R^{2j}g_*\mathbb Q}$.
Thus, for $0\leq i\leq 2m-1$, $i\neq m-2$, $i\neq m+1$ :
$p_{2i,*}^Ni^*R^{2j}g^*\mathbb Q=\delta_{i,j}i^*R^{2j}g^*\mathbb Q$.

For $i=m-2$, $p_{2m-4}^N=\tilde{p}_{2m-4}+\eta$ with $\eta=-1/2p_{2m-4}p_{\infty}$.
Since $p_{\infty}$ is supported on $X_{C}\times_C X_{C}$,
$\eta$ acts as zero on the invariant part, and the result follows.

Parts (iii) and (iv) follow from parts (i) and (ii) and the orthogonality of the projectors $p_{2i}^N$ and $p_{\infty}^N$.

\end{proof}

\section{Relative Chow-Künneth decomposition II}

Let $f:X\rightarrow S$ be a quadric bundle of relative dimension $2m-1$, $X$ and $S$ smooth projective with $dim(S)=2$.
Then $dim(X)=n=2m+1$.
For $0\leq i\leq 2m-1$, let $\xi_i\subset{X}$ be a relative linear section of $f:X\hookrightarrow\mathbb P(\mathcal{E})\rightarrow S$
of relative dimension $i$.

The main theorem of this section is the following :

\begin{thm}
\label{actnilmain}

Let $R\in{\CH_n(X\times_S X)}$ be a relative correspondence. If $R$ acts as zero on $Rf_*\mathbb Q$, then
$R$ is nilpotent, more precisely $R^9=0$.

\end{thm}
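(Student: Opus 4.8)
The plan is to separate the behaviour of $R$ over the smooth locus $U=S\setminus C$ from its behaviour over the discriminant curve $C$, via the localization exact sequence
$$\CH_n(X_{C}\times_{C}X_{C})\stackrel{(i\times i)_*}{\rightarrow}\CH_n(X\times_S X)\stackrel{(j\times j)^*}{\rightarrow}\CH_n(X_U\times_U X_U)\rightarrow 0,$$
and then to pin $R$ down inside a ``nilpotent part'' of $\CH_n(X_{C}\times_{C}X_{C})$ after the cohomological hypothesis has been fully used. First I would show that $R|_{X_U\times_U X_U}=0$. Over $U$ the morphism $f_U$ is a smooth bundle of odd-dimensional quadrics, so its fibers are cellular: the class $p_{\infty}$ is supported on $X_C\times_C X_C$ and hence restricts to $0$, while the restrictions $p_{2i}|_U$ form a complete relative Chow-K\"unneth decomposition with $\Delta_{X_U}=\sum_{i=0}^{2m-1}p_{2i}|_U$. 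Writing $R|_U=\sum_{i,j}(p_{2i}Rp_{2j})|_U$ and applying Lieberman's Lemma, each summand is governed by the scalar correspondence $[\xi_i]^t R\,[\xi_{2m-1-j}]\in\CH_{2+i-j}(U)$; this group vanishes for dimension reasons when $i>j$, and for $i\le j$ cellularity of the fibers makes the cycle class map injective on these correspondence groups, so the vanishing of the action of $R$ on $Rf_*\mathbb Q|_U=\oplus_j\mathbb Q_U[-2j]$ forces each summand to vanish. Hence $R|_U=0$, and by exactness $R=(i\times i)_*T$ for some $T\in\CH_n(X_{C}\times_{C}X_{C})$.

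The heart of the argument is then an explicit description of $\CH_n(X_{C}\times_{C}X_{C})$. Since $\Delta_2=\emptyset$, the fibers of $f$ over $C$ are corank-one quadric cones, and the geometry of $X_C\times_C X_C$ is controlled by the relative Fano variety $F_m(X/S)$, the \'etale double cover $\tilde C\to C$ and its involution $\tau$. I would exhibit a finite generating set of $\CH_n(X_{C}\times_{C}X_{C})$ consisting of products of linear sections restricted to $C$, together with the ``Prym-type'' generators $[(Z\times_C Z)^{\pm}]$ attached to the components $D^{\pm}$ of $\tilde C\times_S\tilde C$, and I would organize these generators by a filtration reflecting incidence with the singular locus and with the ruling. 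This is the main obstacle, as anticipated in the introduction: in contrast with the conic-bundle case of \cite{NS}, where $\CH_3(X_C\times_C X_C)$ is simply spanned by the irreducible components of the fiber square, here the middle-dimensional cycles are more numerous and the intersection theory on the singular fiber product is considerably more delicate.

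Finally I would use the full strength of the hypothesis. The action of $R=(i\times i)_*T$ on $Rf_*\mathbb Q$ includes its action on the rank-one local system $\mathcal V$ carried by the summand $i_{C*}\mathcal V[-2m]$; the Prym-type generators $[(Z\times_C Z)^{\pm}]$ act nontrivially on $\mathcal V$ by the computation of Proposition \ref{act}, so the assumption that $R$ acts as zero forces $T$ into the complementary part generated by the linear-section cycles. On that part the composition law is strictly triangular: composing two correspondences supported on $X_C\times_C X_C$ introduces, through the refined Gysin map $\delta^!$, an excess class arising from the self-intersection of $C$ in $S$ (the normal bundle $N_{C/S}$, a line bundle since $\dim S-\dim C=1$), which strictly lowers the filtration level. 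Since this filtration has length bounded independently of $m$, a careful count of the generator types and of how composition shifts them yields the explicit bound $R^9=0$.
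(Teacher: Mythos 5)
Your overall architecture matches the paper's (localization sequence, analysis over $U$, then a structure result for cycles supported over $C$ plus a nilpotency count), but the first step contains a genuine error. You claim $R|_{X_U\times_U X_U}=0$, arguing that cellularity of the fibers makes the cycle class map injective on the relevant correspondence groups. This fails: over the smooth locus the group $\CH_n(X_U\times_U X_U)$ is generated not only by the classes $[\xi_{i,U}\times_U\xi_{2m-1-i,U}]$ but also by degree-shifting classes of the form $[\xi_{i,U}\times_U\xi_{2m-i,U}]_*\beta$ with $\beta\in\CH_1(U)$ and $[\xi_{i,U}\times_U\xi_{2m+1-i,U}]_*\gamma$ with $\gamma\in\CH_0(U)$. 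These act as zero on every sheaf $R^{2j}f_*\mathbb Q|_U$ for degree reasons, yet they are in general nonzero cycles ($\CH_1(U)$ and $\CH_0(U)$ of an open surface are not detected by cohomology). So the hypothesis only forces the coefficients of the $p_{2i,U}$ to vanish and places $R|_U$ in the degree-shifting ideal $K_U$; the correct conclusion, and the one the paper proves, is that $R|_U$ is nilpotent with $R_U^3=0$ (by a dimension count on $\CH^{p+q+3}(U)$ with $\dim U=2$), whence only $R^3$, not $R$ itself, is of the form $(i\times i)_*T$. This is also where the exponent $9=3\times 3$ comes from; your sketch never actually produces that bound.

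The second and third steps are left as intentions rather than arguments, and they skip the two points that carry the real content. First, the description of $\CH_n(X_C\times_C X_C)$: the paper does not work directly on the singular fiber product but uses the section $e$ of $f_C$, blows up $X$ along $e(C)$, and factors $\tilde{X_C}\to X^H\to C$ with $h$ a $\mathbb P^1$-bundle and $f^H$ a \emph{smooth even-dimensional} quadric bundle over the curve $C$; the computation is then done on $\CH_{n-2}(X^H\times_C X^H)$ via a relative cellular decomposition after a finite base change $C'\to C$. Nothing in your sketch performs this reduction, and the intersection theory "directly on the singular fiber product" that you allude to is precisely what this device avoids. Second, your use of the hypothesis on the Prym part is incomplete: the action on the rank-one local system $\mathcal V$ only determines the difference $n^+-n^-$ of the coefficients of $[(Z\times_C Z)^{\pm}]$, so it forces $n^+=n^-$ but leaves the symmetric combination $n^+(q_{m,C}^++q_{m,C}^-)$ in $T$; one still needs the identity $q_{m,C}^++q_{m,C}^-=d^2 q_{m,C}+\omega$ with $\omega$ in the nilpotent ideal (Lemma \ref{Gen} in the Appendix) to absorb it into the linear-section part. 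Your heuristic that composition over $C$ lowers a filtration via the normal bundle of $C$ in $S$ is in the right spirit (the paper's version is the vanishing of $[C]\cdot\alpha\cdot[C]\cdot\beta\cdot[C]$ in $\CH^{p+q+3}(S)$ for $\dim S=2$), but as written it neither handles the Prym generators nor yields an explicit nilpotency order.
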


Note $U=S\backslash C$ the open discriminant complement of $S$.
Let $j':U\hookrightarrow S$, $j:X_U\rightarrow X$, and
$j\times j:X_U\times_U X_U\hookrightarrow X\times_S X$ be the open immersions.

Denote $\iota:C\hookrightarrow S$, $i:X_{C}\hookrightarrow X$ and 
$i\times i:X_{C}\times_{C}X_{C}\hookrightarrow X\times_S X$ the closed immersion.

We can describe the Chow group $\CH_n(X\times_S X)$ via the localization exact sequence :
$$\CH_n(X_{C}\times_{C}X_{C})\stackrel{(i\times i)_*}{\rightarrow}\CH_n(X\times_S X)\stackrel{(j\times j)^*}{\rightarrow}\CH_n(X_U\times_U X_U)\rightarrow 0.$$

\begin{lemme}

Put $R_U=R_{|X_U\times_U X_U}$. Then $R_U^3=0$.

\end{lemme}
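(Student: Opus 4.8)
The plan is to analyze the endomorphism algebra $\CH_n(X_U\times_U X_U)=\End(h(X_U/U))$ of the relative motive over the smooth locus $U=S\setminus C$, where $f_U:X_U\to U$ is a smooth morphism whose fibers are smooth quadrics of odd dimension $2m-1$. Over $\mathbb{C}$ such a quadric is cellular with exactly one cell in each dimension $0,1,\dots,2m-1$ (its Chow motive being $\bigoplus_{i=0}^{2m-1}\mathbb{L}^i$), so the bundle $X_U\to U$ is relatively cellular, and the restricted linear-section projectors $e_i:=p_{2i}|_U$ (note that $p_\infty|_U=0$, being supported over $C$) realize a splitting $h(X_U/U)\simeq\bigoplus_{i=0}^{2m-1}\mathds{1}_U(-i)$. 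By the projective-bundle-type formula for relatively cellular fibrations this yields
$$\End(h(X_U/U))=\bigoplus_{i,j=0}^{2m-1}\mathrm{Hom}(\mathds{1}_U(-i),\mathds{1}_U(-j))=\bigoplus_{i,j=0}^{2m-1}\CH^{j-i}(U),$$
with composition corresponding to matrix multiplication, the product of the $(i,p)$- and $(p,j)$-entries being taken in $\CH^\bullet(U)$. Since $\dim U=2$, the summand $\CH^{j-i}(U)$ vanishes unless $0\le j-i\le 2$.

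First I would write $R_U=(R_{ij})$ in this matrix form, with $R_{ij}=e_j\circ R_U\circ e_i\in\CH^{j-i}(U)$. Next I would exploit the hypothesis that $R$ acts as zero on $Rf_*\mathbb{Q}$: restricting to $U$, the correspondence $R_U$ acts as zero on each local system $R^{2k}f_{U*}\mathbb{Q}$. As this local system has rank one and the degree-preserving action on it is precisely multiplication by the scalar attached to $R_{kk}\in\CH^0(U)=\mathbb{Q}$ (on each connected component of $U$) through the injective cycle class map $\CH^0(U)\hookrightarrow H^0(U)$, every diagonal entry must vanish, $R_{kk}=0$. Hence $R_{ij}=0$ unless $1\le j-i\le 2$, i.e. $R_U$ is strictly upper triangular with jumps of size $1$ or $2$ in the grading.

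Finally, the exponent $3$ is forced by $\dim U=2$: an entry of $R_U^3$ from $\mathds{1}_U(-i)$ to $\mathds{1}_U(-l)$ is a sum of products of three entries of $R_U$, each of strictly positive codimension, so it lies in $\CH^{l-i}(U)$ with $l-i\ge 3>\dim U$, hence is zero. Therefore $R_U^3=0$. The main obstacle I anticipate lies entirely in the first step, namely establishing the relative cellular decomposition of $\CH_n(X_U\times_U X_U)$ and identifying the cohomological action of $R_U$ with its diagonal $\CH^0(U)$-part; this is the smooth-locus analogue of the conic-bundle computation of \cite{NS}, and once it is in place the lemma reduces to the short grading argument above.
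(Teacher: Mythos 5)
Your overall strategy is the same as the paper's: describe $\CH_n(X_U\times_U X_U)$ in terms of Chow groups of $U$ graded by codimension via the relative linear sections, use the action on the rank-one local systems $R^{2j}f_{U*}\mathbb Q$ to kill the codimension-zero (diagonal) part, and conclude $R_U^3=0$ because three factors of strictly positive codimension on the two-dimensional base land in $\CH^{\geq 3}(U)=0$. Steps two and three are exactly what the paper does (the third step being Proposition \ref{NOdd} of the Appendix).

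There is, however, a genuine gap in the way you set up step one. The matrix-algebra identity $\End(h(X_U/U))=\bigoplus_{i,j}\CH^{j-i}(U)$ and the decomposition $R_U=\sum_{i,j}e_j\circ R_U\circ e_i$ presuppose that the $e_i=p_{2i}|_U$ are mutually orthogonal projectors summing to $\Delta_{X_U}$, i.e. that $h(X_U/U)\simeq\bigoplus_{i=0}^{2m-1}\mathds{1}_U(-i)$ on the nose. That is not available at this point: a smooth odd-dimensional quadric bundle over $U$ need not admit a relative cellular decomposition (the relative Fano varieties $F_i(X_U/U)$ for $i<m$ have no sections in general), and the paper produces one only after a finite base change $U'\to U$. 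What descends by pushforward along the finite covering is only a \emph{surjection} $\phi_U:\CH_2(U)^{\oplus 2m}\oplus\CH_1(U)^{\oplus 2m-1}\oplus\CH_0(U)^{\oplus 2m-2}\twoheadrightarrow\CH_n(X_U\times_U X_U)$ (Proposition \ref{OddQ}), not an isomorphism; correspondingly, $\Delta_{X_U}-\sum_i e_i$ is a priori only a class supported over positive-codimension subsets of $U$ (hence nilpotent), not zero. Since proving that such a difference vanishes is precisely the nilpotence argument this lemma feeds into, assuming the clean splitting here is close to circular. The repair is exactly the paper's route: use the surjection to write $R_U=\sum_i n_i\, p_{2i,U}+\omega_U$ with $\omega_U$ in the image $K_U$ of the positive-codimension summands, kill the scalars $n_i$ by the cohomological action (your step two, noting that $\omega_U$ acts as zero on a local system since it is supported over a proper closed subset), and then apply your dimension count to $\omega_U$ alone. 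With that substitution your argument coincides with the paper's proof.
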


\begin{proof}

Note $\xi_{i,U}=\xi_i\cap X_U\subset{X}$. 
They are relative linear sections of  $f_U:X_U\hookrightarrow \mathbb P(\mathcal{E})_{|U}\rightarrow U$.
Since $f_U:X_U\rightarrow U$ is a smooth quadric bundle of relative dimension $2m-1$, 
 $$\phi_U : \CH_2(U)^{\oplus 2m}\oplus \CH_1(U)^{\oplus 2m-1}\oplus \CH_0(U)^{\oplus 2m-2}\rightarrow \CH_n(X_U\times_U X_U)$$
induced by $[\xi_{i,U}\times_U\xi_{2m-1-i,U}]$, $[\xi_{i,U}\times_U\xi_{2m-i,U}]$ and $[\xi_{i,U}\times_U\xi_{2m+1-i,U}]$ is surjective 
by the Appendix, Proposition \ref{OddQ}.

Since $\phi_U$ is surjective, there exists rational numbers $n_i$ such that 
$$R_U=R_{|X_U\times_U X_U}=\sum_{i=0}^{2m-1}n_ip_{2i,U}+\omega_U\in{\CH_n(X_U\times_U X_U)},$$
with $\omega_{U}\in{K_U=\phi_U(\CH_1(U)^{\oplus 2m-1}\oplus\CH_0(U)^{\oplus 2m-1})}$.

Let $V\subset{U}$ be a contractible open subset and for $0\leq j\leq 2m-1$ take \\
$0\neq\alpha\in{\Gamma(V,R^{2j}f_*\mathbb Q)=H^{2j}(X_V,\mathbb Q)}$.
We have then $R_{U,*}\alpha=\sum_{i=0}^{2m-1}n_ip_{2i,U,*}\alpha=n_j\alpha$ by Proposition \ref{act}.
Thus $n_j\alpha=0$. This gives $n_j=0$ for $0\leq j\leq 2m-1$. Hence
$R_U=\omega_U\in{\CH_n(X_U\times_U X_U)}$.
The Proposition \ref{NOdd} of the Appendix tells us then that $R_U^3=0$.

\end{proof}

By the compatibility of relative correspondences with flat base change $R^3_{|X_U\times_U X_U}=R_U^3=0$.
The localization exact sequence then says that
$$R^3=(i\times i)_*T\in{\CH_n(X\times_S X)},$$ 
with $T\in{\CH_n(X_{C}\times_{C}X_{C})}$.

Recall that $f_C:X_C\rightarrow C$ admits a section $e$. 
Let $\tilde{X}$ be the blow up of $X$ along $e(C)$ and let $\tilde{X_C}$ be the strict transform of $X_C$.
The morphism $\tilde{X_C}\rightarrow C$ factors as $\tilde{X_C}\stackrel{h}{\rightarrow}X^H\stackrel{f^H}{\rightarrow}C$,
where $h:\tilde{X_C}\to X^H$ is a $\mathbb P^1$ bundle and $f^H:X^H\to C$ is a smooth quadric bundle of relative dimension $2m-2$.
Note $\epsilon:\tilde{X_C}\to X_C$. Let $\xi_{j,C}^H\subset{X^H}$ be relative linear sections of $f^H$.

\begin{lemme}
\label{TH}
We have $T=(\epsilon\times\epsilon)_*(h\times h)^* T^H$ with $T^H\in{\CH_{n-2}(X^H\times_C X^H)}$.

\end{lemme}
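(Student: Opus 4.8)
The factorization encodes the fact that over $C$ the fibers of $f$ are quadrics of corank one, i.e. cones with vertex $e(s)$ over the smooth quadric $X^H_s$ of dimension $2m-2$; thus $X_C\to C$ is a family of quadric cones with vertex section $e(C)$, and $\epsilon\colon\tilde{X_C}\to X_C$ is precisely the resolution that blows up the vertex locus. Its exceptional divisor $\tilde E$ records the directions at the vertex, so it is a section of the $\mathbb P^1$-bundle $h\colon\tilde{X_C}\to X^H$, with $\epsilon$ collapsing $\tilde E$ onto $e(C)$ and being an isomorphism elsewhere. The plan is to lift $T$ to $\tilde{X_C}\times_C\tilde{X_C}$, expand it by the projective bundle formula over $X^H\times_C X^H$, and push the result back down, arranging that every term coming from $\tilde E$ dies for dimension reasons. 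The dimensions I will use throughout are $\dim C=1$, $\dim X^H=2m-1$, $\dim X_C=\dim\tilde{X_C}=2m$, $n=2m+1$, and $\dim(X_C\times_C X_C)=4m-1$.

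\textbf{Step 1: lifting $T$ to the resolution.} Since $\epsilon$ is an isomorphism over $X_C\setminus e(C)$, the morphism $\epsilon\times\epsilon$ is an isomorphism over the complement of $(e(C)\times_C X_C)\cup(X_C\times_C e(C))$, a closed subset of dimension $2m<n$. Hence every $n$-dimensional subvariety of $X_C\times_C X_C$ has its generic point in the locus of isomorphism and so admits a unique strict transform mapping birationally onto it. This makes $(\epsilon\times\epsilon)_*\colon \CH_n(\tilde{X_C}\times_C\tilde{X_C})\to\CH_n(X_C\times_C X_C)$ surjective, and I may choose $\tilde T$ with $T=(\epsilon\times\epsilon)_*\tilde T$.

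\textbf{Step 2: projective bundle formula.} The morphism $h\times h\colon\tilde{X_C}\times_C\tilde{X_C}\to X^H\times_C X^H$ is the fibre product of two $\mathbb P^1$-bundles, so iterating the projective bundle theorem gives
$$\tilde T=(h\times h)^*T_{00}+\zeta_1(h\times h)^*T_{10}+\zeta_2(h\times h)^*T_{01}+\zeta_1\zeta_2(h\times h)^*T_{11},$$
where $\zeta_1,\zeta_2$ are the two relative hyperplane classes and the $T_{ab}$ are classes on $X^H\times_C X^H$. One may take $\zeta_i$ to be the pullback to the $i$-th factor of $\epsilon^*H_C$, with $H_C$ the restriction to $X_C$ of the relative hyperplane class of $\mathbb P(\mathcal E)$, since it has degree one on the fibre lines of $h$. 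The next step is to write $\zeta_i=[\tilde E_i]+(h\times h)^*\lambda_i$, where $\tilde E_i$ is the pullback of the section $\tilde E$ in the $i$-th factor and $\lambda_i$ is a class pulled back from $X^H\times_C X^H$; this splits each $\zeta_i$ into a part supported on $\tilde E_i$ and a genuine pullback part.

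\textbf{Step 3: pushing down, and the main obstacle.} Applying $(\epsilon\times\epsilon)_*$ with the projection formula, the pure pullback contributions assemble into $(\epsilon\times\epsilon)_*(h\times h)^*T^H$ for a single class $T^H\in\CH_{n-2}(X^H\times_C X^H)$, which is exactly the term sought. Every remaining contribution carries a factor $[\tilde E_1]$ or $[\tilde E_2]$; since $\epsilon$ maps $\tilde E$ onto $e(C)$, these push forward to classes supported on $e(C)\times_C X_C$, $X_C\times_C e(C)$, or $e(C)\times_C e(C)\cong e(C)$, each of dimension at most $2m<n$, and therefore vanish in $\CH_n$. This forces all exceptional terms to disappear and yields $T=(\epsilon\times\epsilon)_*(h\times h)^*T^H$. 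I expect the delicate point to be precisely this bookkeeping: establishing the relation $\zeta_i=[\tilde E_i]+(h\times h)^*\lambda_i$ correctly and verifying the dimension bounds on the vertex loci that make the exceptional pushforwards zero, together with the clean identification of the relative hyperplane class needed for Step 2.
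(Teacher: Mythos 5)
Your argument is correct, but it travels a different road from the paper's. The paper never invokes the projective bundle formula: it sets $(X_C\times_C X_C)^o$ to be the complement of $X_C\times_C e(C)\cup e(C)\times_C X_C$, observes that the relative projection from the vertex section makes $(X_C\times_C X_C)^o\to X^H\times_C X^H$ an $\mathbb{A}^2$-fibration so that $(p\times p)^*$ is an isomorphism onto $\CH_n((X_C\times_C X_C)^o)$, notes that the restriction $l^*$ to this open set is an isomorphism on $\CH_n$ because the removed locus has dimension $n-1$, and then reads off $T=(l^*)^{-1}(p\times p)^*T^H=(\epsilon\times\epsilon)_*(h\times h)^*T^H$ from a commutative square. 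You instead work on the compactified side: you lift $T$ through the surjection $(\epsilon\times\epsilon)_*$ (valid, since the non-isomorphism locus has dimension $2m<n$), expand the lift by the projective bundle theorem for the $\mathbb{P}^1\times\mathbb{P}^1$-bundle $h\times h$, and kill every term carrying a factor $[\tilde E_i]$ because its pushforward is supported on a locus of dimension $2m<n$. The two proofs encode the same geometry (your complement of $\tilde E_1\cup\tilde E_2$ is exactly the preimage of the paper's open set, and the $\mathbb{A}^2$-fibration is what remains of the $\mathbb{P}^1\times\mathbb{P}^1$-bundle after deleting the section divisors), but the paper's version is shorter because it needs neither the relation $\zeta_i=[\tilde E_i]+(h\times h)^*\lambda_i$ nor the bookkeeping of the four components $T_{ab}$; your version is more explicit about where $T^H$ comes from and works entirely with proper pushforwards rather than inverting a restriction map. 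For your Step 2 to be airtight you should note that $h$ is a Zariski-locally trivial $\mathbb{P}^1$-bundle (it has the section $\tilde E$, so it is the projectivization of a rank-two bundle), which licenses both the projective bundle decomposition and the expression of the section class in terms of the relative hyperplane class; with that remark included, the proof is complete.
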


\begin{proof}

Denote $$X_{C}^o=X_{C}\backslash e(C)$$ and 
$$(X_{C}\times_{C}X_{C})^o=(X_{C}\times_C X_{C})\backslash(X_{C}\times_{C}e(C)\cup e(C)\times_{C}X_{C}).$$
Let $p:X_{C}^o\rightarrow X^H$ be the relative projection from the section $e(C)$ on $H_{C}$
and
$p\times p:(X_C\times_C X_C)^o\rightarrow X^H\times_C X^H$.
Denote
$l:(X_C\times_C X_C)^o\hookrightarrow X_{C}\times_{C}X_{C}$ 
the open immersion.

Consider the commutative diagram : 

$$\xymatrix{\CH_{n-2}(X^H\times_C X^H) \ar[r]^{(p\times p)^*}\ar[d]^{(h\times h)^*}& \CH_n((X_C\times_C X_C)^o)\\
\CH_n(\tilde{X_C}\times_C\tilde{X_C}) \ar[r]^{(\epsilon\times\epsilon)_*}& \ar[u]^{l^*}\CH_n(X_C\times_C X_C)}.$$

Since $p\times p:(X\times_{C}X)^o\rightarrow X^H\times_C X^H$ is an ${\mathbb A}^2$ fibration,
$(p\times p)^*:\CH_{n-2}(X^H\times_{C}X^H)\rightarrow \CH_n(X_{C}\times_{C} X_{C})$ 
is an isomorphism.

Note that
$l^*:\CH_n(X_{C}\times_{C} X_{C})\rightarrow \CH_n((X_{C}\times_{C}X_{C})^o)$
is an isomorphism because $dim(X_{C}\times_{C}e(C)\cup e(C)\times_{C}X_{C})=n-1$.

We have thus $T=(l^*)^{-1}(p\times p)^*T^H=(\epsilon\times\epsilon)_*(h\times h)^*T^H$.

\end{proof}

Put $\lambda=i\circ\epsilon:\tilde{X_C}\rightarrow X$
and write $q_{2j,C}=(\lambda\times\lambda)_*(h\times h)^*p_{2j}(X^H)\in{\CH_n(X\times_S X)}$
Denote $pr:X\times_S X\rightarrow S$ the structural morphism.

\begin{lemme}
\label{Exp}
Let $\xi_j\subset{X}$ be a linear section that extends the linear section 
$\xi_{j,C}=\epsilon(h^{-1}(\xi_{j,C}^H)\subset{X_C}$.
We have
$$q_{2j,C}=[\xi_{j+1}\times_S\xi_{2m-1-j}].pr^*[C]\in{\CH_n(X\times_S X)}.$$

\end{lemme}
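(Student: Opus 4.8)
The plan is to unwind the definition $q_{2j,C}=(\lambda\times\lambda)_*(h\times h)^*p_{2j}(X^H)$, convert the flat pull-back and proper push-forward into an explicit cone cycle supported over $C$, and then recognise that cone cycle as the restriction to the fibre over $C$ of the global product $\xi_{j+1}\times_S\xi_{2m-1-j}$, which is precisely its intersection with $pr^*[C]$.

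First I would compute $(h\times h)^*p_{2j}(X^H)$. Since $p_{2j}(X^H)$ is (a rational multiple of) the fibre product $[\xi^H_{j,C}\times_C\xi^H_{2m-2-j,C}]$ and $h\times h$ is flat, the pull-back commutes with the fibre product over $C$, giving $(h\times h)^*[\xi^H_{j,C}\times_C\xi^H_{2m-2-j,C}]=[h^{-1}(\xi^H_{j,C})\times_C h^{-1}(\xi^H_{2m-2-j,C})]$, each factor being the restriction over the corresponding linear section of $X^H$ of the $\mathbb{P}^1$-bundle $h$. Next I would push forward by $\lambda\times\lambda=(i\circ\epsilon)\times(i\circ\epsilon)$. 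By the very definition $\xi_{k,C}=\epsilon(h^{-1}(\xi^H_{k,C}))$ is the relative cone with vertex $e(C)$ over $\xi^H_{k,C}$, and $\epsilon$ restricts to a birational (degree one) morphism $h^{-1}(\xi^H_{k,C})\to\xi_{k,C}$, being an isomorphism away from the exceptional divisor. Hence $(\lambda\times\lambda)_*[h^{-1}(\xi^H_{j,C})\times_C h^{-1}(\xi^H_{2m-2-j,C})]=(i\times i)_*[\xi_{j,C}\times_C\xi_{2m-2-j,C}]$, so that $q_{2j,C}$ is, up to the normalising constant carried by $p_{2j}(X^H)$, the class of the fibre product of two relative cones, viewed inside $X\times_S X$ through the closed immersion over $C$.

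The geometric heart is to identify these cones with restrictions of global linear sections. Fibrewise the quadric $X_s$, $s\in C$, is the cone with vertex $e(s)$ over the smooth quadric $X^H_s$, and a linear section of $X_s$ passing through the vertex is exactly the cone over the corresponding linear section of $X^H_s$; this is why $\xi_{k,C}$ coincides with the restriction $\xi_{k+1}|_{X_C}$ of the chosen relative linear section $\xi_{k+1}$ of $X/S$ extending it, the index shift $k\mapsto k+1$ recording that coning raises the relative dimension by one. Consequently $[\xi_{j,C}\times_C\xi_{2m-2-j,C}]=[\xi_{j+1,C}\times_C\xi_{2m-1-j,C}]$ with $\xi_{\bullet,C}=\xi_\bullet|_{X_C}$. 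Finally I would recognise the right-hand side: $pr$ is flat and $C\subset S$ is a smooth Cartier divisor, so $pr^*[C]=[X_C\times_C X_C]$, and by the projection formula intersecting the product cycle with this divisor simply restricts it to the fibre over $C$, namely $[\xi_{j+1}\times_S\xi_{2m-1-j}]\cdot pr^*[C]=[\xi_{j+1,C}\times_C\xi_{2m-1-j,C}]$. Chaining the four steps yields the asserted identity.

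I expect the main obstacle to be the bookkeeping of multiplicities and of the normalising constant rather than any single hard argument: one must check the degree-one property of $\epsilon$ on each section, keep track of the factor carried by $p_{2j}(X^H)$ (itself forced by the degree-two self-intersection $[\xi^H_j]^t[\xi^H_{2m-2-j}]=2[C]$ on the quadric fibres), and verify the transversality that lets one read off $pr^*[C]$ with multiplicity one. All of this rests on the fibrewise fact that a linear section through the vertex of the nodal quadric is the cone over a linear section of $X^H$, which is exactly what produces the index shift $j\leftrightarrow j+1$ between the two descriptions.
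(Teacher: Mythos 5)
Your proposal is correct and follows essentially the same route as the paper's (much terser) proof: pull back along the $\mathbb P^1$-bundle $h\times h$, push forward birationally by $\epsilon\times\epsilon$ onto the relative cones, identify those cones with $(i\times i)^*[\xi_{j+1}\times_S\xi_{2m-1-j}]$, and conclude by the projection formula using $(i\times i)_*[X_C\times_C X_C]=pr^*[C]$. The one loose end you flag --- the normalising factor $1/2$ carried by $p_{2j}(X^H)$ --- is in fact also left implicit in the paper's own statement and is harmless for the nilpotency arguments in which the lemma is used.
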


\begin{proof}

Since $\tilde{X_C}\rightarrow X^H$ is a $\mathbb P^1$-fibration we obtain
$(\epsilon\times\epsilon)_*(h\times h)^*[\xi_{j,C}\times_C\xi_{2m-2-j,C}]=(i\times i)^*[\xi_{j+1}\times_S\xi_{2m-1-j}]$.
The result then follows from the projection formula since $(i\times i)_*[X_C\times_C X_C]=pr^*[C]$.

\end{proof}

\begin{rem}

Note that
$$(\lambda\times\lambda)_*(h\times h)^*p_{2m-2}^{+}(X^H)=\lambda_*h^*\Gamma_W^Hg^*g_*(\Gamma_W^H)^th_*\lambda^*
=\gamma\gamma^t=q_{m,C}^+$$ and
$$(\lambda\times\lambda)_*(h\times h)^*p_{2m-2}^{-}(X^H)=\lambda_*h^*\Gamma_W^Hg^*\tau_*g_*(\Gamma_W^H)^th_*\lambda^*
=\gamma\tau^*\gamma^t=q_{m,C}^-.$$

\end{rem}

\begin{lemme}
\label{subspacenil}
Let $K^H_C\subset{\CH_{n-2}(X^H\times_C X^H)}$ be the subspace \ref{Sub} defined in the Appendix.
Put $K_C=(\lambda\times\lambda)_*(h\times h)^*K^H_C$ and 
let $L=\sum_{j=0}^{2m-2}n_jq_{2j,C}+\omega\in{\CH_n(X\times_S X)}$ with $\omega\in{K_C}$.
Then $L^3=0$.

\end{lemme}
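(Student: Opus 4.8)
The plan is to reduce the entire computation to the smooth even-dimensional quadric bundle $f^H:X^H\to C$, where the required nilpotence is available from the Appendix. First I would observe that, by the definition of $q_{2j,C}=(\lambda\times\lambda)_*(h\times h)^*p_{2j}(X^H)$ and by the definition $K_C=(\lambda\times\lambda)_*(h\times h)^*K^H_C$, the correspondence $L$ is the image under the operator
$$\Phi:\CH_{n-2}(X^H\times_C X^H)\to\CH_n(X\times_S X),\qquad \Phi(\alpha)=(\lambda\times\lambda)_*(h\times h)^*\alpha,$$
of the single correspondence
$$L^H=\sum_{j=0}^{2m-2}n_j\,p_{2j}(X^H)+\omega^H\in\CH_{n-2}(X^H\times_C X^H),\qquad \omega^H\in K^H_C.$$
By linearity of $\Phi$ we then have $L=\Phi(L^H)$, so it suffices to understand how $\Phi$ interacts with the composition of relative correspondences, and to know that $(L^H)^3=0$.

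The key technical step is a composition--compatibility formula for $\Phi$. Since every correspondence in sight is supported over $C$, its composition over $S$ is in fact carried out over $C$ on the triple fibre product, and I would compute it on the resolution $\tilde X_C$. Here $h:\tilde X_C\to X^H$ is a $\mathbb P^1$-bundle and $\lambda=i\circ\epsilon$ is birational onto $X_C$, so the middle integration in $\Phi(\beta)\circ\Phi(\alpha)$ localizes first on $X_C$ (via the relation $(i\times i)_*[X_C\times_C X_C]=pr^*[C]$ used in the proof of Lemma \ref{Exp}) and then on $\tilde X_C$. Using the projection formula, the compatibility of the refined Gysin map $\delta^!$ with the l.c.i. morphisms $h$ and $\epsilon$, and the $\mathbb A^2$-fibration / $\mathbb P^1$-bundle isomorphisms established in Lemma \ref{TH}, I would prove that
$$\Phi(\beta)\circ\Phi(\alpha)=c\,\Phi(\beta\circ\alpha)$$
for a fixed nonzero scalar $c$ coming from the $\mathbb P^1$-fibres of $h$. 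Thus $\Phi$ is, up to the scalar $c$, a ring homomorphism for composition, and iterating gives $L^3=\Phi(L^H)^{\circ 3}=c^2\,\Phi\big((L^H)^3\big)$.

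It then remains to invoke the nilpotence statement for the smooth even-dimensional quadric bundle $f^H:X^H\to C$ over the curve $C$: the subspace $K^H_C$ of \ref{Sub} and the linear-section projectors $p_{2j}(X^H)$ are arranged precisely so that any element $L^H=\sum_j n_j p_{2j}(X^H)+\omega^H$ with $\omega^H\in K^H_C$ satisfies $(L^H)^3=0$; this is the even analogue of Proposition \ref{NOdd} supplied by the Appendix (the definition \ref{Sub} of $K^H_C$ is designed exactly for this). Combining with the displayed composition formula yields $L^3=c^2\,\Phi\big((L^H)^3\big)=c^2\,\Phi(0)=0$, which is the assertion.

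I expect the main obstacle to be the composition--compatibility formula for $\Phi$. The difficulty is intersection-theoretic: $X_C\to C$ is singular, so one must pass to the resolution $\tilde X_C$ and carefully track the excess contribution of the $\mathbb P^1$-fibres of $h$ and of the exceptional divisor of $\epsilon$ when the three copies are intersected along the middle diagonal. Showing that this excess contributes only the expected scalar $c$, and produces no stray lower-dimensional correction terms that could obstruct the clean reduction to $(L^H)^3$, is the crux; once it is in place, the nilpotence on $X^H\times_C X^H$ is a purely formal consequence of the Appendix.
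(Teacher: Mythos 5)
There is a genuine gap, and it is twofold. First, the nilpotence you want to import from $X^H\times_C X^H$ does not exist: the correspondences $p_{2j}(X^H)$ are \emph{projectors}, so for instance $L^H=p_0(X^H)$ satisfies $(L^H)^3=p_0(X^H)\neq 0$. The Appendix only asserts nilpotence for elements of the subspace $K^H_C$ itself (Proposition \ref{NOdd} in the odd case, and the final Remark, $F\in K_C\Rightarrow F^2=0$, in the even case); it says nothing of the sort for linear combinations $\sum_j n_jp_{2j}(X^H)+\omega^H$ that include the linear-section projectors. Second, the composition--compatibility formula $\Phi(\beta)\circ\Phi(\alpha)=c\,\Phi(\beta\circ\alpha)$ with $c\neq 0$ is false, and its failure is precisely the mechanism that makes the lemma true. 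Both $\Phi(\alpha)$ and $\Phi(\beta)$ are supported over the curve $C\subset S$, so composing them \emph{over $S$} is an excess-intersection situation that forces a factor of $[C]\cdot[C]\in\CH_0(S)$: the product $q_{2i,C}q_{2j,C}$ is supported over finitely many points of $C$, not over all of $C$ as $c\,q_{2(i\wedge j),C}$ would be. Note also that if your formula did hold with $c\neq 0$, then $L=q_{0,C}$ would give $L^3=c^2\Phi(p_0(X^H)^3)=c^2q_{0,C}\neq 0$, contradicting the very statement you are proving.

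The paper's argument stays entirely downstairs on the base. Using Lemma \ref{Exp} it writes $q_{2j,C}=[\xi_{2m-1-j}]\circ[C]\circ[\xi_{j+1}]^t$ as a composition of relative correspondences factoring through $S$, and writes the generators $q_{j,s'}$, $q_{m,s'}$, $q_{m+1,s'}$ of $K_C$ as compositions factoring through points of $C'$. By associativity, a triple product such as $q_{2i,C}q_{2j,C}q_{2k,C}$ contains the inner factor $[C][\xi_{i+1}]^t[\xi_{2m-1-j}][C][\xi_{j+1}]^t[\xi_{2m-1-k}][C]$, which lies in $\CH^{p+q+3}(S)$ with $p,q\geq 0$ whenever it is not already zero for degree reasons; since $\dim S=2$ this group vanishes. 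The mixed and quadratic products with elements of $K_C$ die by the same degree count in $\CH^*(C')$ or $\CH^*(C'\times_C C')$. In short, the nilpotence comes from the base being a surface (three copies of $[C]$ cannot meet), not from any nilpotence on $X^H\times_C X^H$, and a proof must exploit the composition over $S$ rather than trying to commute it past the pullback from $X^H$.
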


\begin{proof}
It suffices to shows that,
for $\omega\in{K_{C}}$ and $\eta\in{K_C}$,
\begin{enumerate}
\item [a)] $q_{2i,C}q_{2j,C}q_{2k,C}=0$.
\item [b)] $q_{2j,C}\omega=\omega q_{2j,C}=0$ 
\item [c)] $\omega\eta=0$.
\end{enumerate}

We start by proving a). By the Lemma \ref{Exp}, we have
$q_{2j,C}=[\xi_{2m-1-j}]\circ[C]\circ[\xi_{j+1}]^t$. Hence,
$$q_{2i,C}q_{2j,C}q_{2k,C}=[\xi_{2m-1-i}][C][\xi_{i+1}]^t[\xi_{2m-1-j}][C][\xi_{j+1}]^t[\xi_{2m-1-k}][C][\xi_{k+1}]^t.$$
We claim that 
\begin{equation}
\label{formula}
[C][\xi_{i+1}]^t[\xi_{2m-1-j}][C][\xi_{j+1}]^t[\xi_{2m-1-k}][C]=0.
\end{equation}
To prove the claim,
consider the expressions $[\xi_{i+1}]^t[\xi_{2m-1-j}]\in{\Corr^p_S(S,S)=\CH^p(S)}$ and $[\xi_{j+1}]^t[\xi_{2m-1-k}]\in{\Corr^q_S(S,S)=\CH^q(S)}$. 
If $p<0$ or $q<0$ the formula holds. If not then the expression (\ref{formula}) belongs to $\CH^{p+q+3}(S)$ and vanishes since $p+q\geq 0$.

The subspace $K_C\in{\CH_n(X\times_S X)}$ is generated by the elements 
\begin{eqnarray*}
q_{j,s'}&=&(\lambda\times\lambda)_*(h\times h)^*(\rho\times\rho)_*([\xi^H_{j-1,s'}\times\xi^H_{2m-j,s'}]) \, \mbox{for} \,
 2\leq j\leq 2m-1 \, \, j\ne m \, \, j\ne m+1 \\
q_{m+1,s'}&=&(\lambda\times\lambda)_*(h\times h)^*(\rho\times\rho)_*([\xi^H_{m,s'}\times\Lambda'^H_{s'}]) \\
q_{m,s'}&=&(\lambda\times\lambda)_*(h\times h)^*(\rho\times\rho)_*([\Lambda'^H_{s'}\times\xi^H_{m,s'}]).
\end{eqnarray*}
where $s'\in{C'}$.

Note that $q_{j,s'}=q_{2m+1-j,s'}^t$ for $s'\in{C'}$ and $2\leq j\leq 2m-1$.

We have
\begin{eqnarray*}
q_{j,s'}&=&\lambda_*h^*\rho_*[\xi^H_{2m-j,C'}][s'][\xi^H_{j-1}]^t\rho^*h_*\lambda^* \, \mbox{for} \, 2\leq j\leq 2m-1 \, \, j\ne m \, \, j\ne m+1  \\
q_{m+1,s'}&=&\lambda_*h^*\rho_*[\Lambda'^H][s'][\xi^H_{m,C'}]^t\rho^*h_*\lambda^* \\
q_{m,s'}&=&\lambda_*h^*\rho_*[\xi^H_{m,C'}][s'][\Lambda'^H]^t\rho^*h_*\lambda^*.
\end{eqnarray*}

For c), let us prove for instance that $q_{m,s'}q_{m,t'}=0$, the other equalities are similar. We claim that
\begin{equation}
\label{formule}
[s'][\xi_{m+1,C'}^H]^t\rho^*h_*\lambda^*\lambda_*h^*\rho_*[\Lambda'^H][t']=0. 
\end{equation}
Consider
the expression $[\xi_{m+1,C'}^H]^t\rho^*h_*\lambda^*\lambda_*h^*\rho_*[\Lambda'^H]\in{\Corr^p_S(C',C')=\CH^p(C'\times_C C')}$. If $p<0$ the formula holds.
If not the expression \ref{formule} belongs to $\CH^{2+p}(C'\times_C C')$ and vanishes since $p\geq 0$.

Let us prove b).
We have $q_{m,s'}q_{2j,C}=0$ for $0\leq j\leq 2m-2$. As before, it follows since 
$[s'][\Lambda'^H]^t\rho^*h_*\lambda^*[\xi_{2m-1-j}][C]=0$. Indeed
$[\Lambda'^H]^t\rho^*h_*\lambda^*[\xi_{2m-1-j}]\in{\Corr^{p}_S(S,C')=\CH^{p}(C')}$ vanishes if $p<0$
and $[s'][\Lambda'^H]^t\rho^*h_*\lambda^*[\xi_{2m-1-j}][C]\in{\Corr^{p+2}_S(S,C')=\CH^{p+2}(C')}$.
Similarly $q_{2j,C}q_{m,s'}=0$ for $0\leq j\leq 2m-2$ since $[C][\xi_{j+1}]^t\lambda_*h^*\rho_*[\xi^H_{m,C'}][s']=0$.
Indeed $[\xi_{j+1}]^t\lambda_*h^*\rho_*[\xi^H_{m,C'}]\in{\Corr^p_S(C',S)=\CH^{p-1}(C')}$ vanishes if $p<1$
and $[C][\xi_{j+1}]^t\lambda_*h^*\rho_*[\xi^H_{m,C'}][s']\in{\Corr^{p+2}_S(C',S)=\CH^{p+1}(C')}$.
By transposition we see that similarly $q_{m+1,s'}q_{2j,C}=q_{2j,C}q_{m+1,s'}=0$ for $0\leq j\leq 2m-2$.
Similarly $q_{2j,C}q_{k,s'}=0$ for $2\leq k\leq 2m-1$, $k\ne m$, $k\ne m+1$ and $0\leq j\leq 2m-2$ 
since $[C][\xi_{j+1}]^t\lambda_*h^*\rho_*[\xi^H_{2m-k,C'}][s']=0$.
By transposition we see that similarly $q_{k,s'}q_{2j,C}=0$ for $2\leq k\leq 2m-1$, $k\ne m$, $k\ne m+1$ and $0\leq j\leq 2m-2$.

\end{proof}

\begin{lemme}
\label{coef}
We have
$T^H=\sum_i n_i p_{2i}(X^H)+\omega^H$ with $\omega^H\in{K^H_C}$, where $T^H$ is the cycle introduced in Lemma \ref{TH}.

\end{lemme}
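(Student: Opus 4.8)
The plan is to read off the shape of $T^H$ from a description of the Chow group $\CH_{n-2}(X^H\times_C X^H)$ attached to the smooth even quadric bundle $f^H:X^H\to C$ of relative dimension $2m-2$ over the curve $C$, and then to pin down the single ambiguous coefficient by letting $R$ act on the perverse cohomology of $f$.

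First I would invoke a generation statement for $\CH_{n-2}(X^H\times_C X^H)$, namely the even-fibre analogue over a curve of Proposition \ref{OddQ} (which I would place in the Appendix alongside it). It should say that, modulo the fibrewise subspace $K^H_C$ of \ref{Sub}, this group is spanned by the relative classes $p_{2i}(X^H)=\tfrac12[\xi^H_i\times_C\xi^H_{2m-2-i}]$ for $0\le i\le 2m-2$, together with the two relative middle classes $p_{2m-2}^{\pm}(X^H)$ built from the two families $\Lambda^H,\Lambda'^H$ of maximal linear subspaces and the double cover $C'\to C$. Passing to the invariant and anti-invariant combinations of $p_{2m-2}^{\pm}(X^H)$, the invariant one is $p_{2(m-1)}(X^H)$, while the anti-invariant one is the relative Prym generator $p_\infty^H=(-1)^m d^{-2}\bigl(p_{2m-2}^{+}(X^H)-p_{2m-2}^{-}(X^H)\bigr)$. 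Thus there are rational numbers $n_i,n_\infty$ with
$$T^H=\sum_{i=0}^{2m-2}n_i\,p_{2i}(X^H)+n_\infty\,p_\infty^H+\omega^H,\qquad \omega^H\in K^H_C,$$
and it remains only to show $n_\infty=0$.

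To kill $n_\infty$ I would apply $(\lambda\times\lambda)_*(h\times h)^*$ to this identity. By the definition of $q_{2i,C}$, by the Remark following Lemma \ref{Exp} together with the definition of $p_\infty$ (so that $(\lambda\times\lambda)_*(h\times h)^*p_\infty^H=p_\infty$), and by the definition of $K_C$, this yields
$$R^3=(\lambda\times\lambda)_*(h\times h)^*T^H=\sum_{i=0}^{2m-2}n_i\,q_{2i,C}+n_\infty\,p_\infty+\omega,\qquad \omega\in K_C.$$
Now $R$, and hence $R^3$, acts as zero on $Rf_*\mathbb Q$, in particular on the summand $(R^{2m}f_*\mathbb Q)_v$. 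The classes $q_{2i,C}=[\xi_{2m-1-i}]\circ[C]\circ[\xi_{i+1}]^t$ are assembled from linear sections, whose cohomology classes are monodromy-invariant; since $(R^{2m}f_*\mathbb Q)_v$ is the anti-invariant (variant) part, each $q_{2i,C}$ annihilates it, exactly as the $p_{2i}$ do in Proposition \ref{act}. The generators of $K_C$ are supported over finitely many points of $C$, so they too act as zero on the lisse rank-one sheaf $\mathcal V$ on $C$. Finally $p_{\infty,*}=I$ on $(R^{2m}f_*\mathbb Q)_v$ by Proposition \ref{act}(iii). Evaluating the displayed identity on $(R^{2m}f_*\mathbb Q)_v$ therefore gives $n_\infty\,I_{(R^{2m}f_*\mathbb Q)_v}=0$, whence $n_\infty=0$ and $T^H=\sum_i n_i\,p_{2i}(X^H)+\omega^H$.

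The hard part is the generation statement used in the first step: it is precisely a description of the Chow group of the fibre square of an even quadric bundle over a curve, and the delicate point is to account correctly for the doubling of the middle cohomology and for the two families of maximal linear subspaces through the double cover $C'\to C$ and the Prym correspondence $\rho=I-\tau_*$; this is the technical heart flagged in the introduction. Once that description is in hand, the coefficient computation above is a formal consequence of the action on perverse cohomology via Proposition \ref{act}, exactly in the spirit of the proof that $R_U=\omega_U$ over the open stratum.
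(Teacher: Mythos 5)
Your proposal is correct and follows essentially the same route as the paper: cite the appendix generation statement (Corollary \ref{EvenQ} and Lemma \ref{Gen}) to write $T^H$ in terms of the $p_{2j}(X^H)$, the two middle classes $p_{2m-2}^{\pm}(X^H)$ and an element of $K^H_C$, then push forward by $(\lambda\times\lambda)_*(h\times h)^*$ and evaluate the action of $R^3$ on $(R^{2m}f_*\mathbb Q)_v$ to constrain the middle coefficients, absorbing $p_{2m-2}^++p_{2m-2}^-=d^2p_{2m-2}(X^H)+\omega_2^H$ at the end. The only cosmetic difference is that you diagonalize into invariant/Prym combinations and show the Prym coefficient vanishes, whereas the paper keeps $n^{+},n^{-}$ and deduces $n^{+}=n^{-}$ from $2d^2(n^{+}-n^{-})\alpha=0$; these are the same computation.
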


\begin{proof}

By the Corollary \ref{EvenQ} and Lemma \ref{Gen} of the Appendix applied to the quadric bundle 
$f^H_{C}:X^H_{C}\rightarrow C$, there exist rational numbers $n_j$ such that
$$T^H=\sum_{j=0,j\neq m-1}^{2m-2}n_jp_{2j}(X^H)+n^+p_{2m-2}^+(X^H)+n^-p_{2m-2}^-(X^H)+\omega^H
\in{\CH_{n-2}(X^H\times_{C} X^H)},$$
with $\omega^H\in{K^H_{C}}$.

We obtain
\begin{equation*} 
R^3=(\lambda\times\lambda)_*(h\times h)^*T^H=\sum_{j=1,j\neq m}^{2m-1}n_jq_{j,C}+n^+q_{m,C}^++n^-q_{m,C}^-+\omega\in{\CH_n(X\times_S X)},
\end{equation*}
with $\omega\in{K_{C}}$.

Let $V\subset{S}$ be a contractible open subset of $S$ such that $C\cap V\neq\emptyset$ and
take $0\neq\alpha\in{H^0(V,(R^{2m}f_*\mathbb Q)_v)=H^{2m}(X_V,\mathbb Q)_v}$.
As $q_{j,C}\in{\CH_n(X\times_S X)}$ and $\omega\in{\CH_n(X\times_S X)}$ are nilpotent by the Lemma \ref{subspacenil},
$q_{j,C*}\alpha=0$ and $\omega_*\alpha=0$. Moreover the proof of Proposition 2.7(iii) shows that
$$R_*\alpha=(\sum_{j=1,j\neq m}^{2m-1}n_jq_{j,C}+n^+q_{m,C}^++n^-q_{m,C}^-+\omega)_*\alpha
=2d^2(n^+-n^-)\alpha.$$
As $\alpha\neq 0$, we obtain $n^+=n^-=n$.

Hence,
\begin{equation*} 
T^H=\sum_{j=0,j\neq m-1}^{2m-1}n_jp_{2j}(X^H)+n(p_{2m-2}(X^H)^++p_{2m-2}^-)+\omega^H\in{\CH_n(X^H\times_C X^H)}.
\end{equation*}

Note that $p_{2m-2}^+(X^H)+p_{2m-2}^-(X^H)=d^2p_{2m-2}(X^H)+\omega_2^H$, with $\omega_2^H\in{K_C^H}$
by Lemma \ref{actnileven}.

We obtain finally :

\begin{eqnarray*} 
T^H&=&\sum_{j=0,j\neq m-1}^{2m-2}n_jp_{2j}(X^H)+n(d^2p_{2m-2}(X^H)+\omega^H_2)+\omega^H\\
&=&\sum_{j=0}^{2m-2}n_jp_{2j}(X^H)+\omega^H_3\in{\CH_n(X^H\times_C X^H)},
\end{eqnarray*}
with $n_{m}=nd^2$ and
$\omega^H_3=\omega^H+n\omega^H_2\in{K^H_{C}}$.

\end{proof}

We can now prove Theorem \ref{actnilmain}.
\begin{proof}(Theorem \ref{actnilmain})
We have by Lemma \ref{coef}
\begin{equation} 
R^3=\sum_{j=0}^{2m-2}n_jq_{2j,C}+\omega_3\in{\CH_n(X\times_S X)},
\end{equation}
with $\omega_3\in{K_{C}}$.
The Lemma \ref{subspacenil} then shows that $R^9=(R^3)^3=0$. This completes the proof of Theorem \ref{actnilmain}.
\end{proof}

\begin{proposition}
\label{motives}
\begin{enumerate}
\item The mutually orthogonal projectors $p_{2i}^N\in{\CH_n(X\times_S X)}$, $0\leq i\leq 2m-1$, and $p_{\infty}^N\in{\CH_n(X\times_S X)}$ constitute a relative Chow-K\"unneth decomposition of $f:X\rightarrow S$.
\item
We have the following isomorphisms of relative Chow motives :
\begin{enumerate}
\item (1)$(S,\Delta_S)(i)\tilde{\rightarrow}(X,p_{2i})\tilde{\rightarrow}(X,p_{2i}^N)$ for $0\leq i\leq 2m-1$.
\item (2) $(\tilde{C},\rho)(m)\tilde{\rightarrow}(X,p_{\infty})\tilde{\rightarrow}(X,p_{\infty}^N)$.
\end{enumerate}
\end{enumerate}
\end{proposition}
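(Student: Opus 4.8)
The plan is to verify the three defining conditions of a relative Chow-K\"unneth decomposition for part (1), and to exhibit explicit mutually inverse correspondences for part (2). For part (1), the mutual orthogonality of the family $\{p_{2i}^N\}_{0\le i\le 2m-1}$ together with $p_{\infty}^N$ has already been arranged by the Gram-Schmidt process (Lemma \ref{GS} and Propositions \ref{ortho1}, \ref{ortho2}), and the correct action on the perverse direct images is precisely Proposition \ref{acts}. So the only remaining condition is the completeness relation $\sum_{i=0}^{2m-1}p_{2i}^N+p_{\infty}^N=\Delta_X$ in $\CH_n(X\times_S X)$.

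To prove this I would set $R=\Delta_X-\sum_{i=0}^{2m-1}p_{2i}^N-p_{\infty}^N\in\CH_n(X\times_S X)$ and argue $R=0$ by combining idempotency with nilpotence. Since the $p_{2i}^N$ and $p_{\infty}^N$ are mutually orthogonal idempotents, their sum $P$ is idempotent, hence $R=\Delta_X-P$ is itself idempotent. Next, reading the BBDG decomposition $Rf_*\mathbb Q=\oplus_{j=0}^{2m-1}\mathbb Q_S[-2j]\oplus i_{C*}\mathcal V[-2m]$ against Proposition \ref{acts}, the family $\{p_{2i}^N\}$ and $p_{\infty}^N$ acts as a complete system of orthogonal projectors realizing each summand $\mathbb Q_S[-2j]$ and the Prym local system $(R^{2m}f_*\mathbb Q)_v$; thus $P$ acts as the identity on $Rf_*\mathbb Q$ and $R$ acts as zero. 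Theorem \ref{actnilmain} then gives $R^9=0$, and an idempotent that is nilpotent vanishes, so $R=0$. This establishes part (1).

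For part (2)(1) I would use the two correspondences $[\xi_{2m-1-i}]\in\Corr_S(S,X)$ and $\tfrac12[\xi_i]^t\in\Corr_S(X,S)$ as the candidate maps between $(S,\Delta_S)(i)$ and $(X,p_{2i})$. The relations recorded earlier, namely $[\xi_i]^t[\xi_{2m-1-i}]=2[S]$ and $p_{2i}=\tfrac12[\xi_{2m-1-i}][\xi_i]^t$, show directly that the two composites equal $\Delta_S$ and $p_{2i}$, i.e. the identities of the two motives, so these maps are mutually inverse isomorphisms once the Tate twist $(i)$ is matched against the degree of $[\xi_i]^t$. For part (2)(2), I would take the incidence-based correspondences $\gamma$ and $\gamma^t$ (composed with the relevant projectors and scaled by powers of $d$ and $(-1)^m$), using $\gamma^t\circ\gamma=(-1)^m\rho$ and $p_{\infty}=(-1)^m d^{-2}\gamma\circ\rho\circ\gamma^t$ to reduce the two composites to the defining projectors of $(\tilde C,\rho)(m)$ and $(X,p_{\infty})$.

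Finally, the second isomorphisms $(X,p_{2i})\cong(X,p_{2i}^N)$ and $(X,p_{\infty})\cong(X,p_{\infty}^N)$ would follow from the general principle that idempotents related by the Gram-Schmidt orthonormalization are conjugate and hence define isomorphic summands: the maps $p^N\circ p$ and $p\circ p^N$ are mutually inverse once one checks $p\,p^N p=p$ and $p^N p\,p^N=p^N$, which hold because each $p^N$ differs from $p$ only by left and right multiplication by factors $(1-\tfrac12 p_{2k})$ that are absorbed against $p$ by the vanishing relations $p_{2j}p_{2i}=0$ proved earlier. The main obstacle is the bookkeeping in part (2): matching the Tate twists to the correspondence degrees and, above all, correctly tracking the scalars caused by $\rho$ being not idempotent but satisfying $\rho^2=2\rho$ (so that the genuine Prym projector is $\tfrac12\rho$), together with the factors $d^2$ and $(-1)^m$ introduced by $\gamma$.
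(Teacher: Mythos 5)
Your proposal is correct and follows essentially the same route as the paper: part (1) is proved by exactly the paper's argument (the idempotent $R=\Delta_X-\sum p_{2i}^N-p_{\infty}^N$ acts as zero on $Rf_*\mathbb Q$ by Proposition \ref{acts}, is nilpotent by Theorem \ref{actnilmain}, hence vanishes), and part (2) uses the same correspondences $[\xi_\bullet]$, $[\xi_\bullet]^t$, $\gamma$, $\gamma^t$ and the products $p\,p^N$, $p^N p$ for the second isomorphisms. Your explicit tracking of the factor $\tfrac12$ in $[\xi_i]^t[\xi_{2m-1-i}]=2[S]$ and of the non-idempotency $\rho^2=2\rho$ is if anything slightly more careful than the paper's own statement of these maps.
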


\begin{proof}

For the proof of part one we consider now
$$R=\Delta_X-\sum_{i=0}^{2m-1}p_{2i}^N-p_{\infty}^N\in{\CH_n(X\times_S X)}.$$
As $R$ acts by zero on $Rf_*\mathbb Q$ by Proposition \ref{acts}, Theorem \ref{actnilmain} tells us that $R^9=0$. 
\\$R$ being an idempotent : $R=R^2=R^9$, so $R=0$ i.e. 
$$\Delta_X=\sum_{i=0}^{2m-1}p_{2i}^N+p_{\infty}^N\in{\CH_n(X\times_S X)}.$$

Let us prove part two.
The first isomorphism of (1) is induced by $[\xi_i]\in{\Corr^{2m-1-i}_S(S,X)}$ and $[\xi_{2m-1-i}]^t\in{\Corr^{-(2m-1-i)}_S(S,X)}$
is the inverse.
The second isomorphism of (1) is induced by $p_{2i}p_{2i}^N\in{\Corr^0_S(X,X)}$ and $p_{2i}^Np_{2i}\in{\Corr^0_S(X,X)}$ is the inverse
(note that $p_{2i}^N-p_{2i}$ is nilpotent of order $2$).
The first isomorphism of (2) is induced by $\gamma\in{\Corr^{m}_S(\tilde{C},X)}$ and $\gamma^t\in{\Corr^{-m}_S(X,\tilde{C})}$
is the inverse.
The second isomorphism  of (2) is induced by $p_{\infty}p_{\infty}^N\in{\Corr^0_S(X,X)}$ and $p_{\infty}^Np_{\infty}\in{\Corr^0_S(X,X)}$
is the inverse(note that $p_{\infty}^N-p_{\infty}$ is nilpotent of order $2$).

\end{proof}

Combining parts 1 and 2 of Proposition \ref{motives} we obtain the following :

\begin{corollaire}
\label{motif}
We have an isomorphism of relative Chow motives :
$$(X,\Delta_X)\tilde{\rightarrow}\oplus_{i=0}^{2m-1}(S,\Delta_S)(-i)\oplus(\tilde{C},\rho)(-m).$$

\end{corollaire}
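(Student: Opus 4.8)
The plan is to obtain the corollary as a purely formal consequence of the two parts of Proposition \ref{motives}, working inside the additive, pseudo-abelian category of relative Chow motives over $S$. All the geometry specific to quadric bundles has already been used to establish that proposition; what is left is bookkeeping in the motivic category.

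First I would invoke the general categorical fact underlying the Remark that follows the definition of a relative Chow-K\"unneth decomposition. If $\{p_{2i}^N\}_{0\le i\le 2m-1}$ together with $p_\infty^N$ are mutually orthogonal idempotents in $\Corr^0_S(X,X)$ whose sum is $\Delta_X$, then, viewing each $p_{2i}^N$ and $p_\infty^N$ as the identity of the corresponding direct summand, the canonical inclusions and projections realize an isomorphism
$$
(X,\Delta_X)\;\tilde{\rightarrow}\;\bigoplus_{i=0}^{2m-1}(X,p_{2i}^N)\,\oplus\,(X,p_\infty^N).
$$
Part (1) of Proposition \ref{motives} provides exactly the orthogonality and the completeness $\sum_i p_{2i}^N+p_\infty^N=\Delta_X$ that this step requires, so this decomposition is immediate.

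Next I would substitute the explicit identifications of part (2) of Proposition \ref{motives}. There the summands are pinned down: $(X,p_{2i}^N)\,\tilde{\rightarrow}\,(S,\Delta_S)(i)$ for $0\le i\le 2m-1$, with inverses built from $[\xi_i]$, $[\xi_{2m-1-i}]^t$ and from $p_{2i}p_{2i}^N$, $p_{2i}^N p_{2i}$; and $(X,p_\infty^N)\,\tilde{\rightarrow}\,(\tilde{C},\rho)(m)$, with inverses built from $\gamma$, $\gamma^t$ and from $p_\infty p_\infty^N$, $p_\infty^N p_\infty$. Composing the decomposition of the previous step with the inverses of these isomorphisms, and recording the Tate twist in the sign convention of the corollary (so that $(S,\Delta_S)(i)$ is written $(S,\Delta_S)(-i)$ and $(\tilde{C},\rho)(m)$ is written $(\tilde{C},\rho)(-m)$), I obtain
$$
(X,\Delta_X)\;\tilde{\rightarrow}\;\bigoplus_{i=0}^{2m-1}(S,\Delta_S)(-i)\,\oplus\,(\tilde{C},\rho)(-m),
$$
which is the claim.

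There is no genuine obstacle here, only one point to keep consistent: the componentwise isomorphisms of part (2) must assemble into a single isomorphism of the two direct sums. This is automatic because the decomposition of the first step is a biproduct in an additive category, so a morphism out of (or into) a finite direct sum is determined by its components and is an isomorphism as soon as each component is. The sole thing to fix once and for all is the normalization of the Tate twist, which is why I would settle the convention at the outset and check that the passage $i\mapsto -i$, $m\mapsto -m$ between Proposition \ref{motives}(2) and the corollary is precisely the change between the two standard sign conventions for the twist.
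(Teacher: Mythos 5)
Your proposal is correct and matches the paper's own argument: the corollary is obtained exactly by combining part 1 of Proposition \ref{motives} (the orthogonal idempotents summing to $\Delta_X$, giving the direct sum decomposition of $(X,\Delta_X)$) with part 2 (the identification of each summand with $(S,\Delta_S)(-i)$ or $(\tilde{C},\rho)(-m)$). The additional remarks on biproducts and the Tate twist convention are consistent with, and only make explicit, what the paper leaves implicit.
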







\section{Consequences}

\subsection{Absolute Chow-K\"unneth decomposition}

Let $f:X\rightarrow S$ a quadric bundle with odd dimensional fibers $2m-1$, $X$, $S$ smooth projective, $dim(S)=2$ and $C\subset{S}$ smooth.

Let for $0\leq i\leq 2m-1$, $\xi_i\subset{X}$ be relative linear sections of $X\hookrightarrow\mathbb P(\mathcal{E})\rightarrow S$.

Denote $k:X\times_S X\hookrightarrow X\times X$ the closed embedding.
\\ $k_*:\CH_n(X\times_S X)\rightarrow \CH_n(X\times X)$ gives the functor from relative to absolute correspondences\cite{livre motif}.

$S$ being a smooth projective surface, it admits a Chow-K\"unneth decomposition given by projectors $p_j(S)\in{\CH_2(S\times S)}$
for $0\leq j\leq 4$ which satisfy Murre's conjectures\cite{livre motif}

Let $\pi_{2i}$ be the image under this functor of the projector $p_{2i}^N(X/S)$ and let $\pi_{\infty}$ be the image of $p_{\infty}^N$.
By Proposition \ref{motives} (2) and Murre's result we have $(X,\pi_{2i})\simeq(S,\Delta_S)(-i)\simeq\oplus\sum_{j=0}^4(S,p_j(S))(-i)$.
This means that we have  $\pi_{2i}=\sum_{j=0}^4 \pi_{2i,j}$. 
Put $$
p_k(X)=\left\{
\begin{array}{cc}
\sum_{2i+j=k}\pi_{2i,j}& k\ne 2m+1\\
\sum_{2i+j=k}\pi_{2i,j}+\pi_{\infty}&k=2m+1.
\end{array}
\right.$$

\begin{thm}
If the double covering $\tilde{C}\to C$ is non trivial,the projectors $p_k(X)$ induces an absolute Chow-Künneth decomposition of $X$ and 
\begin{equation}
\label{absmotive}
(X,\Delta_X)\tilde{\rightarrow}\oplus_{i=0}^{2m-1}(S,\Delta_S)(-i)\oplus(\tilde{C},\rho)(-m).
\end{equation}
\end{thm}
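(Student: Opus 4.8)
The plan is to derive the absolute decomposition from the relative one of Corollary~\ref{motif}, transporting it through the functor $k_*$ from relative to absolute Chow motives and then splitting the base pieces with the Chow-K\"unneth decomposition of $S$. Since $k_*$ is compatible with composition of correspondences, the mutually orthogonal relative projectors $p_{2i}^N,p_\infty^N$ are sent to mutually orthogonal absolute projectors $\pi_{2i}=k_*p_{2i}^N$ and $\pi_\infty=k_*p_\infty^N$; and since $k$ maps the relative diagonal $\Delta_X\subset X\times_S X$ isomorphically onto the absolute diagonal $\Delta_X\subset X\times X$, the relation $\sum_{i=0}^{2m-1}p_{2i}^N+p_\infty^N=\Delta_X$ of Proposition~\ref{motives} pushes forward to $\sum_{i=0}^{2m-1}\pi_{2i}+\pi_\infty=\Delta_X\in\CH_n(X\times X)$. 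Applying $k_*$ to the relative isomorphism of Corollary~\ref{motif} already produces the absolute isomorphism (\ref{absmotive}); what remains is to check that the explicitly assembled $p_k(X)$ realize the corresponding K\"unneth projectors.

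Next I would refine each $\pi_{2i}$. From $k_*$ applied to Proposition~\ref{motives}(2) one has an isomorphism $(X,\pi_{2i})\simeq(S,\Delta_S)(-i)$, given by correspondences $a_i,b_i$ with $b_i\circ a_i=\pi_{2i}$ and $a_i\circ b_i$ the identity of $(S,\Delta_S)(-i)$. Setting $\pi_{2i,j}=b_i\circ p_j(S)\circ a_i$ and using $\sum_{j=0}^4 p_j(S)=\Delta_S$ together with the orthogonality of the surface projectors $p_j(S)$, one verifies formally that the $\pi_{2i,j}$ are idempotents, orthogonal for fixed $i$, with $\sum_j\pi_{2i,j}=\pi_{2i}$. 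Because every $\pi_{2i,j}$ satisfies $\pi_{2i}\pi_{2i,j}=\pi_{2i,j}=\pi_{2i,j}\pi_{2i}$, the orthogonality $\pi_{2i}\pi_{2i'}=0$ for $i\ne i'$ and $\pi_\infty\pi_{2i}=\pi_{2i}\pi_\infty=0$ propagates to $\pi_{2i,j}\pi_{2i',j'}=0$ for $i\ne i'$ and $\pi_\infty\pi_{2i,j}=\pi_{2i,j}\pi_\infty=0$. Thus $\{\pi_{2i,j}\}\cup\{\pi_\infty\}$ is a single system of mutually orthogonal idempotents. Since each $p_k(X)$ is the sum over a disjoint subfamily (those with $2i+j=k$, plus $\pi_\infty$ when $k=2m+1$), the $p_k(X)$ are orthogonal idempotents with $\sum_k p_k(X)=\sum_{i,j}\pi_{2i,j}+\pi_\infty=\Delta_X$, giving conditions (1) and (2) of a Chow-K\"unneth decomposition.

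For the cohomological condition (3) I would pass to the Betti realization. The realization of $(S,p_j(S))$ is $H^j(S,\mathbb Q)$, so $\pi_{2i,j}$, corresponding to $(S,p_j(S))(-i)$, realizes as the projector onto a copy of $H^j(S,\mathbb Q)(-i)$ sitting in cohomological degree $j+2i$ of $H^*(X,\mathbb Q)$, while $\pi_\infty$ realizes as the projector onto the Prym summand of $H^1(\tilde C,\mathbb Q)$ in degree $2m+1$. The realization of (\ref{absmotive}) is exactly the decomposition $H^k(X,\mathbb Q)=\bigoplus_{2i+j=k}H^j(S,\mathbb Q)(-i)$ for $k\ne 2m+1$, with the extra Prym summand when $k=2m+1$. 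Grouping the $\pi_{2i,j}$ by $2i+j=k$ therefore produces the $k$-th K\"unneth projector, so $p_{k,*}(X)H^l(X,\mathbb Q)=\delta_{k,l}I_{H^l(X,\mathbb Q)}$, which is condition (3). This finishes the verification that the $p_k(X)$ form an absolute Chow-K\"unneth decomposition, and (\ref{absmotive}) has already been obtained by applying $k_*$ to Corollary~\ref{motif}.

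The step I expect to be the main obstacle is keeping exact track of the Tate twists under $k_*$: one must ensure that the relative Lefschetz twist $(-i)$ over $S$ becomes a genuine absolute Tate twist $(-i)$, so that $(S,p_j(S))(-i)$ contributes precisely to $H^{j+2i}(X,\mathbb Q)$ and the regrouping $2i+j=k$ delivers the $k$-th K\"unneth component rather than a shifted one. Once the cohomological degree of each elementary piece $\pi_{2i,j}$ is pinned down, all the remaining identities (orthogonality, summation to $\Delta_X$, and the K\"unneth property) are formal consequences of the relative decomposition and of Murre's decomposition of the surface $S$.
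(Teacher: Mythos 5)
Your proposal is correct and follows essentially the same route as the paper: push the relative projectors forward by $k_*$, split each $\pi_{2i}$ along Murre's decomposition of $S$ by conjugating $p_j(S)$ with the correspondences realizing $(X,\pi_{2i})\simeq(S,\Delta_S)(-i)$, and verify the K\"unneth property degree by degree. The one point you assert rather than justify --- that the realization of $(X,\pi_{\infty})$ is concentrated in degree $2m+1$ --- is precisely where the nontriviality of $\tilde{C}\to C$ enters; the paper makes this explicit by identifying $H^i(X,\pi_{\infty})$ with $H^{i-2m}(C,\mathbb L)$ for a nontrivial rank one local system $\mathbb L$, which vanishes unless $i-2m=1$.
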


\begin{proof}

By construction the projectors $p_k(X)$ are mutually orthogonal and add up to the diagonal $\Delta_X$.
To show that it gives an absolute decomposition we have to show that the action of $p_k(X)$ on $H^l(X)$ is zero for
$k\neq l$. Since the action of $\pi_{2i,k-2i}$ on $H^l(X)$ factors through the action of $p_{k-2i}(S)$ on $H^{l-2i}(S)$ the result
is clear for $k\ne 2m+1$. For the remaining case put $M_{\infty}=(X,\pi_{\infty})$ and note that if the double covering $\tilde{C}\to C$ is non trivial
we have an isomorphism $$H^i(M_{\infty})\simeq H^{i-2m}(C,\mathbb L)$$ where $\mathbb L=(\pi_*\mathbb Q)^-$ is a non trivial rank one local system.
The result then follows since $H^j(C,\mathbb L)=0$ for $j\ne 1$.
The formula (\ref{absmotive}) follows from Corollary \ref{motif}.

\end{proof}

\begin{rem}
\label{trivcov}
If the double covering $\tilde{C}\to C$ is trivial, then the motive $(X,\pi_{\infty})$ is isomorphic to $(C,\Delta_C)(-m)$ and we have a decomposition
$(X,\Delta_X)\tilde{\rightarrow}\oplus_{i=0}^{2m-1}(S,\Delta_S)(-i)\oplus(C,\Delta)(-m)$.
\end{rem}

\begin{corollaire}

If the double covering $\tilde{C}\to C$ is non trivial, we have isomorphisms of $\mathbb Q$ Hodge structures
\begin{itemize}
\item $H^{2j}(X){\rightarrow}H^0(S)(-j)\oplus H^2(S)(j-1)\oplus H^4(S)(j-2)$
\item $H^{2j+1}(X)\tilde{\rightarrow}H^1(S)(-j)\oplus H^3(S)(j-1)$ if $j\neq m$
\item $H^n(X)\tilde{\rightarrow}H^1(S)(-m-1)\oplus H^3(S)(-m)\oplus H^1(\tilde{C})^-$
\end{itemize}
and isomorphisms of Chow groups with rational coefficients
\begin{itemize}
\item $\CH^0(S)\oplus \CH^1(S)(\oplus \CH^2(S)(-j+2)\tilde{\rightarrow}\CH^j(X)$ if $j\neq m+1$
\item $\CH^0(S)(-m-1)\oplus \CH^1(S)(-m)\oplus \CH^2(S)(-m+1)\oplus \CH^1(\tilde{C})(-m)\tilde{\rightarrow}\CH^{m+1}(X)$ 
\item $\CH^0(S)_{alg}\oplus \CH^1(S)_{alg}\oplus \CH^2(S)_{alg}\oplus \CH^1(\tilde{C})^-\tilde{\rightarrow}\CH^{m+1}(X)_{alg}$ 
\end{itemize}

\end{corollaire}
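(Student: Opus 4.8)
The plan is to derive both families of isomorphisms by applying realization functors to the absolute motivic decomposition (\ref{absmotive}) and reading off graded pieces one cohomological degree (resp. codimension) at a time. Concretely, I would feed (\ref{absmotive}) into the Betti (Hodge) realization and into the Chow realization $M\mapsto\CH^\bullet(M)$; both are additive and $\mathbb{Q}$-linear, so they convert the direct sum on the right-hand side of (\ref{absmotive}) into a direct sum of realizations, sending each Tate twist $(-i)$ to a twist-and-shift on cohomology and to a reindexing on Chow groups. All that then remains is bookkeeping, together with a correct identification of the realizations of the Prym summand $(\tilde{C},\rho)(-m)$.

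For the Hodge structures I apply $H^\bullet(-)$. The summand $(S,\Delta_S)(-i)$ contributes $H^{\bullet-2i}(S)$, suitably Tate-twisted; since $S$ is a smooth projective surface its cohomology lives only in degrees $0$ through $4$, so in a fixed total degree $k$ only $\bullet\in\{0,2,4\}$ ($k$ even) or $\bullet\in\{1,3\}$ ($k$ odd) survive, and solving $\bullet=k-2i$ over the admissible range of $i$ isolates exactly the listed $H^0(S),H^2(S),H^4(S)$ (resp. $H^1(S),H^3(S)$) summands. For the Prym summand I use the computation already made in the preceding theorem: $H^\ell\big((\tilde{C},\rho)(-m)\big)\cong H^{\ell-2m}(C,\mathbb{L})$ with $\mathbb{L}=(\pi_*\mathbb{Q})^-$ the nontrivial rank-one local system, together with the vanishing $H^j(C,\mathbb{L})=0$ for $j\neq 1$. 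Hence this summand contributes nothing except in degree $\ell=2m+1=n$, where it supplies $H^1(C,\mathbb{L})=H^1(\tilde{C})^-$. This simultaneously explains the absence of a Prym term in the two generic-degree isomorphisms and its appearance in the degree-$n$ one, and it is precisely here that the nontriviality of $\tilde{C}\to C$ is used.

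For the Chow groups I apply $\CH^\bullet(-)$ to the same decomposition. Now $(S,\Delta_S)(-i)$ contributes $\CH^{\bullet-i}(S)$, and as $S$ is a surface only $\CH^0(S),\CH^1(S),\CH^2(S)$ occur, yielding the three listed base summands in each codimension $j$. The Prym summand realizes to $\CH^{\bullet-m}\big((\tilde{C},\rho)\big)$: since $\tilde{C}$ is a connected curve and $\rho=I-\tau_*$ kills the $\tau$-invariant part — in particular acts as zero on $\CH^0(\tilde{C})$ — the motive $(\tilde{C},\rho)$ has Chow groups concentrated in codimension one, namely the anti-invariant part $\CH^1(\tilde{C})^-$. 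After the twist this lands only in codimension $m+1$, producing the extra summand in $\CH^{m+1}(X)$ and in no other codimension. The algebraically trivial statement follows by restricting the codimension-$(m+1)$ isomorphism to algebraically trivial cycles, using that the isomorphism is induced by algebraic correspondences (hence respects algebraic equivalence) and that $\CH^1(\tilde{C})^-$ already sits inside $\mathrm{Pic}^0(\tilde{C})_{\mathbb{Q}}$ and thus coincides with its own algebraically trivial part. The main obstacle is the correct localization and identification of the Prym summand in a single degree and a single codimension; this rests on the cohomological vanishing from the preceding theorem and on the codimension-one concentration of the Chow groups of the Prym motive of a curve, after which the degree and twist bookkeeping is routine.
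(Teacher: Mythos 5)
Your proposal is correct and takes essentially the same route as the paper, which states this corollary without a separate argument precisely because it is the Hodge and Chow realization of the decomposition (\ref{absmotive}), with the Prym summand located in a single degree via $H^i(M_{\infty})\simeq H^{i-2m}(C,\mathbb L)$ and the vanishing $H^j(C,\mathbb L)=0$ for $j\neq 1$ when $\tilde{C}\to C$ is non trivial. Your bookkeeping for the surface summands and your identification of the Chow groups of $(\tilde{C},\rho)$ as concentrated in $\CH^1(\tilde{C})^-$ agree with the paper's use of the Prym projector $\rho=I-\tau_*$.
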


\begin{rem}
If $S=\mathbb P^2$ the previous result gives a motivic proof(with $\mathbb Q$-coefficients) of a theorem of Beauville \cite{A.Beauville}.
\end{rem}

\begin{proposition}
The projectors $p_k(X)$, $0\leq k\leq 2n$ of $X$ satisfy Murre's conjectures I, II and III. 
\end{proposition}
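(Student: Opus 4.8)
The plan is to deduce all three conjectures from the explicit motivic decomposition
$$(X,\Delta_X)\tilde{\rightarrow}\bigoplus_{i=0}^{2m-1}(S,\Delta_S)(-i)\oplus(\tilde{C},\rho)(-m)$$
of Corollary \ref{motif}, together with the facts that the surface $S$ satisfies Murre's conjectures II and III (Murre's construction via the Picard and Albanese projectors) and that the curve $\tilde C$ satisfies all of Murre's conjectures. Conjecture I is already contained in the preceding theorem, which asserts that the $p_k(X)$ form an absolute Chow-K\"unneth decomposition. For II and III the starting observation is that, exactly as for the action on cohomology used in the proof of that theorem, the action of the component $\pi_{2i,j}$ on $\CH^l(X)$ is intertwined, through the isomorphism $(X,\pi_{2i})\simeq(S,\Delta_S)(-i)$ of Proposition \ref{motives}(2) and its further decomposition into the $p_j(S)$, with the action of $p_j(S)$ on $\CH^{l-i}(S)$; likewise $\pi_{\infty}$ is intertwined with the action of $\rho$ on $\CH^{l-m}(\tilde C)$. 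Thus on Chow groups the isomorphism above reads
$$\CH^l(X)\simeq\bigoplus_{i=0}^{2m-1}\CH^{l-i}(S)\oplus\rho_*\CH^{l-m}(\tilde C),$$
and, being compatible with the cycle class map, it sends homological triviality on $X$ term by term to homological triviality on $S$ and on $\tilde C$.

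For Conjecture II I would run the following elementary bookkeeping. On the $i$-th summand $\CH^{l-i}(S)$ the projector $p_k(X)=\sum_{2i'+j'=k}\pi_{2i',j'}$ acts, by orthogonality of the $\pi_{2i'}$, as $p_{k-2i}(S)$. By Murre II for $S$ this is zero unless $l-i\le k-2i\le 2(l-i)$, that is, unless $l\le k\le 2l$ (indeed the left inequality gives $k\ge l+i\ge l$ and the right gives $k\le 2l$); hence every surface summand is annihilated when $k<l$ or $k>2l$. The Prym summand $\rho_*\CH^{l-m}(\tilde C)$ is acted on only by $\pi_{\infty}$, i.e. only when $k=2m+1$; since $\tilde C$ is a curve and $\rho=I-\tau_*$ kills $\CH^0(\tilde C)$, this summand is nonzero only for $l-m=1$, namely $l=m+1$, and then $2m+1\in[l,2l]=[m+1,2m+2]$. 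In every case $p_k(X)$ vanishes on $\CH^l(X)$ for $k<l$ and for $k>2l$, which is Conjecture II.

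For Conjecture III note first that $\ker(p_{2l}(X))\subset\CH^l(X)_{hom}$ automatically, since $p_{2l}$ acts as the identity on $H^{2l}(X)$; it therefore suffices to prove the reverse inclusion $\CH^l(X)_{hom}\subset\ker(p_{2l}(X))$. As $2l$ is even we have $2l\ne 2m+1$, so $p_{2l}(X)$ contains no $\pi_{\infty}$ term and acts as zero on the Prym summand, while on the $i$-th summand it acts as $p_{2(l-i)}(S)$. Given $z\in\CH^l(X)_{hom}$, compatibility with the cycle class map shows that each surface component of $z$ lies in $\CH^{l-i}(S)_{hom}$, and Murre III for $S$ gives $p_{2(l-i)}(S)_*=0$ there, while the Prym component is killed outright; hence $p_{2l,*}z=0$. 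This yields $\CH^l(X)_{hom}\subset\ker(p_{2l}(X))$ and thus the desired equality. (For $l=m+1$ the Prym component of a homologically trivial class has degree zero on $\tilde C$ and so lies in $\CH^1(\tilde C)_{hom}$, consistently with its vanishing under $p_{2m+2}(X)$.)

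The heart of the argument, and the step I expect to require the most care, is the intertwining claim of the first paragraph: that the action of $p_k(X)$ on $\CH^l(X)$ is block-diagonal for the displayed decomposition and equals $p_{k-2i}(S)$ (respectively $\rho$) on each block. This is precisely the Chow-group analogue of the cohomological factorization already invoked for the absolute theorem, and it rests on the functoriality of the correspondences $[\xi_i]$, $\gamma$ and their transposes realizing the isomorphisms of Proposition \ref{motives}(2), together with the orthogonality of the $\pi_{2i}$ and $\pi_{\infty}$. Once this is in place, Conjectures II and III follow purely formally from the known cases of the surface $S$ and the curve $\tilde C$.
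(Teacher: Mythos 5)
Your proof is correct and follows essentially the same route as the paper: Conjecture I from the preceding theorem, and Conjectures II and III by intertwining the action of $p_k(X)$ on $\CH^l(X)$ with the action of $p_{k-2i}(S)$ on $\CH^{l-i}(S)$ (and of $\rho$ on $\CH^1(\tilde C)$), then invoking Murre's theorem for surfaces and the fact that $\pi_\infty$ acts only on $\CH^{m+1}(X)$. You are in fact slightly more explicit than the printed proof on Conjecture III, spelling out that $p_{2l}(X)$ contains no $\pi_\infty$-term and that homological triviality passes to each summand, which the paper leaves implicit.
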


\begin{proof}

We have already verified conjecture I in Theorem \ref{absmotive}. To prove conjecture II, we have to show that $p_k(X)$ acts as zero on
$\CH^l(X)$ if $k\notin\left\{l,...,2l\right\}$.
Let us first consider the case $k\ne 2m+1$. Write $p_k(X)=\sum_i\pi_{2i,k-2i}$. Since the action of $\pi_{2i,k-2i}$ on $\CH^l(X)$
factors through the action of $p_{k-2i}(S)$ on $\CH^{l-i}(S)$, the action is zero if $k-2i\notin\left\{l-i,...,2l-2i\right\}$ by Murre's 
theorem. Hence the action of $p_k(X)$ on $\CH^l(X)$ is zero if $k\notin\left\{l+i,...,2l\right\}$(which is stronger then the statement that we need).
For $k=2m+1$ we write $p_{2m+1}(X)=\sum_i \pi_{i,2m+1-i}+\pi_{\infty}$. For the projectors $\pi_{i,2m+1-i}$ we use the same reasoning as above,
and the projector $\pi_{\infty}$ only acts on $\CH_{m+1}(X)$ since the Prym projector $\rho$ only acts on $\CH^1(\tilde{C})$ if the double covering 
is non trivial. If the double covering is trivial, we decompose the motive $\pi_{\infty}$ as in Remark \ref{trivcov}.

\end{proof}

\begin{rem}
As the motive of $X$ is a direct sum of a number of Tate twists of copies of the motive of the surface $S$ and a direct summand of the motive of a curve,
the motive $(X,\Delta_X)$ is finite dimensional if the motive of $S$ is finite dimensional. 
\end{rem}

\section{Appendix}

\subsection{Computation of $\gamma^t\circ\gamma$ }

Let $f:X\rightarrow S$ be a quadric bundle over a smooth projective surface $S$, with fibers of dimension $2m-1$. 
Let $W\subset{F_m(X/S)}$ a multisection of degree $d$ of $F_m(X/S)\rightarrow\tilde{C}$. 
Denote $i:C\hookrightarrow S$ the closed immersion 
and $\pi:\tilde{C}\rightarrow S$ be the double covering $\tilde{C}\rightarrow C$
composed with the immersion $i$. 
Recall that $$\gamma=\Gamma_W g^*\in{\Corr_C(\tilde{C},X)}.$$ 
The aim of this section is to compute 
$\gamma^t\circ\gamma\in{\Corr^0_C(\tilde{C},\tilde{C})}=\CH_1(\tilde{C}\times_C\tilde{C})$. We shall only sketch the proof. 
For details see \cite{NS} or the author's PhD thesis.

Note that 
$$\CH_1(\tilde{C}\times_C\tilde{C})\cong H_2^{BM}(\tilde{C}\times_C\tilde{C})\cong\End(\pi_*\mathbb Q),$$
hence it suffices to study the action of $\gamma^t\circ\gamma$ on $\pi_*\mathbb Q$.
To do this we choose a point $s\in{C}$ and a transversal slice $T$ to $C$ at $s$.
Put $X_T=f^{-1}(T)$ and let $\bar{X_T}$ be a smooth compactification of $X_T$ such that
$f_T:X_T\rightarrow T$ extends to $\bar{f}:\bar{X_T}\rightarrow\bar{T}$.

Restricting to $T$ we obtain
$$(\pi_*\mathbb Q )_s\stackrel{\gamma_T}{\rightarrow}R^{2m}f_{T*}\mathbb Q\stackrel{\gamma^t_T}{\rightarrow}(\pi_*\mathbb Q)_s$$

Put $\pi^{-1}(s)=\left\{s',s''\right\}$.
Since $(\pi_*\mathbb Q)_s=\mathbb Q[s']\oplus\mathbb Q[s'']$, the above map is given by a $2$ by $2$ matrix $A$ with rational coefficients.

\begin{proposition}
We have $$A=(-1)^m d^2
\begin{pmatrix}
1&-1\\
-1&1
\end{pmatrix}.$$
\end{proposition}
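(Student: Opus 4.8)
The plan is to reduce the computation of $A$ to intersection numbers of the two families of $m$-planes, carried out on the smooth total space $\bar{X_T}$ of the transversal slice. Since $\CH_1(\tilde C\times_C\tilde C)\cong\End(\pi_*\mathbb Q)$ and $\gamma^t\circ\gamma$ is a morphism of local systems, it is parallel and determined by its stalk at $s$, which is exactly what the restriction to $T$ computes. A first, purely formal observation is that $\gamma^t\circ\gamma$ commutes with the deck involution $\tau_*$, so in the basis $[s'],[s'']$ its matrix must be a circulant $A=a\,I+b\,\tau_*$; it then remains only to determine the two scalars $a$ and $b$.

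To make the entries geometric, I would use that $g:W\to\tilde C$ has degree $d$ and that $\Gamma_W$ carries each point of $W$ over $s'$ to the $m$-plane it parametrises (all lying in one family, hence homologous), so that $\gamma_T[s']=d[P']$ and $\gamma_T[s'']=d[P'']$, where $P',P''$ denote single $m$-planes of the two families on the special fibre $X_s$. Taking transposes (Poincar\'e duality) then yields $A_{ab}=d^2\langle P_a,P_b\rangle$, where the $\langle P_a,P_b\rangle$ are the degrees of the intersection products of the plane cycles realised in $\bar{X_T}$; note these are of complementary dimension, since $\dim P_a+\dim P_b=2m=\dim\bar{X_T}$. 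Now $X_s$, being of corank one (as $C=\Delta_1$ is smooth and $\Delta_2=\emptyset$), is a cone with one vertex over a smooth even quadric $Q^{2m-2}$, and $P',P''$ are the cones over its two rulings by $(m-1)$-planes. Blowing up the vertex --- precisely the passage $h:\tilde{X_C}\to X^H$ to the $\mathbb P^1$-bundle over the smooth even quadric bundle $X^H$ used in the body of the paper --- replaces each $m$-plane by a $\mathbb P^1$-bundle over a ruling and reduces the numbers $\langle P_a,P_b\rangle$ to the intersection theory of the ruling classes on $Q^{2m-2}$, together with the contribution of the vertex.

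The final step computes the two numbers. The self- and cross-intersections of the two maximal isotropic $(m-1)$-planes on $Q^{2m-2}$ depend only on the parity of $m-1$, and tracking them through the blow-up and the normal bundle of the planes in $\bar{X_T}$ gives $\langle P',P'\rangle=\langle P'',P''\rangle=(-1)^m$ and $\langle P',P''\rangle=(-1)^{m+1}$, whence $a=(-1)^m d^2$, $b=-a$, and $A=(-1)^m d^2(I-\tau_*)=(-1)^m d^2\left(\begin{smallmatrix}1&-1\\ -1&1\end{smallmatrix}\right)$. The relation $b=-a$ reflects that the invariant combination $P'+P''$ is a relative linear section, which moves in the family over $T$ and is therefore isotropic for the slice pairing, so that $A$ has rank one; this is consistent with the isomorphism $\gamma_*:(\pi_*\mathbb Q)^-\xrightarrow{\sim}(R^{2m}f_*\mathbb Q)_v$ onto a rank-one local system. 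The main obstacle is exactly this sign: because the singular special fibre $X_s$ forbids intersecting the $m$-plane cycles directly on it, one must realise the intersection on the smooth $\bar{X_T}$ and apply the Picard--Lefschetz formula at the node created by the vertex (equivalently, an excess-intersection computation on the blow-up $\tilde{X_C}$), and it is delicate to pin down the normalisation $(-1)^m$ rather than merely its absolute value.
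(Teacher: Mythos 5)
Your reduction is the same as the paper's: restrict to a transversal slice $T$ through $s$, factor the composite $H^0(\Lambda_{t_i})\to H^{2m}_c(X_T)\to H^{2m}(X_T)\to H^{2m}(\Lambda_{u_j})$ through $H^{2m}(\bar{X_T})$ so that each matrix entry becomes $d^2$ times an intersection number of $m$-planes in $\bar{X_T}$, and observe that $A=a\,I+b\,\tau_*$ with $b=-a$ because $\Lambda'+\Lambda''$ is rationally equivalent to a relative linear section, hence numerically a fibre class and isotropic against any $m$-plane. All of this is correct and matches the paper. The gap is at the one remaining number: you assert $\langle P',P'\rangle=(-1)^m$ and $\langle P',P''\rangle=(-1)^{m+1}$ but compute neither; the proposed route (blow up the vertex, track normal bundles, excess intersection, ``Picard--Lefschetz at the node'') is never carried out, and you explicitly concede that the normalisation $(-1)^m$ --- which is the entire content of the proposition beyond the rank-one structure --- is delicate to pin down. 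Since everything else only yields $A=a(I-\tau_*)$ for an undetermined scalar $a$, the proof is not complete.

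The paper closes this with a short direct argument that avoids the blow-up altogether. Because $\dim(\Lambda_1\cap\Lambda_2)\equiv m-1 \pmod 2$ for $(m-1)$-planes in the same family of the smooth quadric $X^H_s$, exactly one of the two configurations (same family for $m$ even, opposite families for $m$ odd) consists of cones over \emph{disjoint} $(m-1)$-planes; after moving in rational equivalence the corresponding $m$-planes meet only at the vertex, and transversally in the smooth $2m$-fold $\bar{X_T}$, so that intersection number is exactly $+1$. No excess-intersection or vanishing-cycle bookkeeping is needed: the vertex is a singular point of the fibre $X_s$ but not of the total space $X_T$, so this is an honest transversal point count in a smooth ambient variety, and invoking Picard--Lefschetz here is a red herring. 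For $m$ even this places $+1$ on the diagonal, for $m$ odd on the antidiagonal, and the relation $b=-a$ you already have then forces the other entry to be $-1$; this parity dichotomy is precisely what produces the global sign $(-1)^m$. To complete your write-up you should either reproduce this transversality argument or actually perform the excess-intersection computation on the blow-up; as it stands the sign is asserted, not proved.
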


\begin{proof}

Given $t,u\in{\left\{s',s''\right\}}$, write $g^{-1}(t)=\left\{t_1,...,t_d\right\}$ and
write $g^{-1}(u)=\left\{u_1,...,u_d\right\}$.
The $(t,u)$-component of $(\gamma^t\circ\gamma)_{T}$ is given by the composition
$$H^0(\left\{t\right\})\rightarrow \oplus_{i=1}^dH^0(\Lambda_{t_i})\rightarrow H^{2m}_c(X_T)\rightarrow H^{2m}(X_T)\rightarrow\oplus_{j=1}^d H^{2m}(\Lambda_{u_j})\rightarrow H^0(\left\{u\right\}).$$
The map $H^{2m}_c(X_T)\rightarrow H^{2m}(X_T)$ factors through $H^{2m}(\bar{X_T})$ and the map
$$H^0(\Lambda_{t_i})\rightarrow H^{2m}_c(X_T)\rightarrow H^{2m}(X_T)\rightarrow H^{2m}(\Lambda_{u_j})$$ is given
by the intersection $(\Lambda_{t_i}.\Lambda_{u_j})$ in $\bar{X_T}$.

To calculate this intersection number,
note that the singular quadric $X_s=f^{-1}(s)$ is a cone with vertex $x$ over a smooth quadric $X^H_s$ of dimension $2m-2$ and the
two families of $m$-planes of $X_s$ are obtained by taking the cone over the two families of $(m-1)$-planes of $X^H_s$.
Also recall that if $\Lambda_1$ and $\Lambda_2$ belong to the same family of $m-1$ planes on $X^H_s$ we have
$$dim(\Lambda_1\cap\Lambda_2)\equiv m-1[2].$$
Suppose that $m$ is even. Then if $t=u$, by moving $\Lambda_{t_i}$ and $\Lambda_{t_j}$ in their rational equivalence class
we may assume that the intersection is the vertex $x$. Hence $\Lambda_{t_i}.\Lambda_{t_j}=1$.
If $u=\tau(t)$, we compute the intersection $\Lambda_{t_i}.\Lambda_{u_j}$ as follows.
Since $\Lambda_{t_i}+\Lambda_{u_j}$ is rationally equivalent in $\bar{X_T}$ to the intersection of $X_s$ with a relative linear subspace and
any two fibers are numerically equivalent we obtain $\Lambda_{t_i}.(\Lambda_{t_i}+\Lambda_{u_j})=0$.
Hence $\Lambda_{t_i}.\Lambda_{u_j}=-1$.
The proof for $m$ odd is similar.

\end{proof}

\begin{corollaire}
\label{app}
We have
\begin{enumerate}
\item [(1)]$\gamma^t\circ\gamma=(-1)^md^2(I-\tau_*)\in{\CH_1(\tilde{C}\times_C\tilde{C})}$
\item [(2)]$\gamma$ induces an isomorphism $\gamma_* (\pi_*\mathbb Q)^-\tilde{\rightarrow} (R^{2m}f_*\mathbb Q)_v$. 
\end{enumerate}

\end{corollaire}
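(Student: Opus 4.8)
The plan is to read off both assertions from the matrix $A$ computed in the preceding Proposition. For part (1), I would first record how the deck transformation acts: since $\tau$ interchanges the two points $s'$ and $s''$ of $\pi^{-1}(s)$, the endomorphism $\tau_*$ of the stalk $(\pi_*\mathbb Q)_s=\mathbb Q[s']\oplus\mathbb Q[s'']$ is represented by $\begin{pmatrix}0&1\\1&0\end{pmatrix}$, so that $I-\tau_*$ is represented by $\begin{pmatrix}1&-1\\-1&1\end{pmatrix}$. The Proposition therefore reads $A=(-1)^m d^2(I-\tau_*)$ as an endomorphism of $(\pi_*\mathbb Q)_s$. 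Under the identification $\CH_1(\tilde{C}\times_C\tilde{C})\cong\End(\pi_*\mathbb Q)$ recalled at the beginning of this section, an equality of the stalkwise actions over a general $s$ is an equality of cycle classes, and this gives part (1).

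For part (2), I would restrict to the anti-invariant summand $(\pi_*\mathbb Q)^-$, the $(-1)$-eigensheaf of $\tau_*$, on which $I-\tau_*$ acts by multiplication by $2$. By part (1) the composite $\gamma^t\circ\gamma$ then acts on $(\pi_*\mathbb Q)^-$ as multiplication by $2(-1)^m d^2\neq 0$, so $\gamma_*$ is injective there. To upgrade injectivity to an isomorphism onto $(R^{2m}f_*\mathbb Q)_v$ I would compare ranks: since the covering is nontrivial by hypothesis, $(\pi_*\mathbb Q)^-$ is a rank-one local system on $C$, and by the Proposition of Section 2 the vanishing sheaf $(R^{2m}f_*\mathbb Q)_v$ is likewise rank one. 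An injective morphism of rank-one local systems is an isomorphism, which gives the claim.

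The step that requires genuine care is checking that $\gamma_*$ carries $(\pi_*\mathbb Q)^-$ into the vanishing part, and not into the invariant (ambient) part, of $R^{2m}f_*\mathbb Q$. I would verify this by observing that the invariant part is the image of $i^*R^{2m}g_*\mathbb Q$ (as in the Proposition of Section 2), which contains the class of a relative linear section; under $\gamma_*$ the invariant generator $[s']+[s'']$ maps to the sum $[\Lambda_{s'}]+[\Lambda_{s''}]$ of the two ruling classes, and this sum is exactly such a linear-section class, hence invariant, whereas the anti-invariant generator $[s']-[s'']$ maps to the difference $[\Lambda_{s'}]-[\Lambda_{s''}]$, which represents a nonzero class in the cokernel $(R^{2m}f_*\mathbb Q)_v$. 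This is precisely where the two families of $m$-planes must be distinguished, through the same parity congruence $\dim(\Lambda_1\cap\Lambda_2)\equiv m-1\pmod 2$ that produced the sign pattern of $A$ in the Proposition. Once this is established, the rank comparison together with the injectivity supplied by part (1) completes the argument formally.
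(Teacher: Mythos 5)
Your proposal is correct and follows essentially the same route as the paper: part (1) is read off from the matrix $A$ of the preceding Proposition, and part (2) uses the same argument that the composite $\gamma^t_*\circ\gamma_*$ acts on $(\pi_*\mathbb Q)^-$ by the nonzero scalar $2(-1)^m d^2$, giving injectivity, which is then upgraded to an isomorphism because both $(\pi_*\mathbb Q)^-$ and $(R^{2m}f_*\mathbb Q)_v$ are rank-one local systems on $C$. The only difference is your explicit verification that $\gamma_*$ carries the anti-invariant part into the vanishing part rather than the ambient part, a point the paper leaves implicit; this is a reasonable extra precaution but does not change the argument.
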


\begin{proof}

Part one is the expression of the matrix $A$. Part two follows from the fact
that the composite morphism of sheaves on $S$
$(\pi_*\mathbb Q)^-\stackrel{\gamma_*}{\rightarrow}(R^{2m}f_*\mathbb Q)_v\stackrel{\gamma^t_*}{\rightarrow}(\pi_*\mathbb Q)^-$
is an isomorphism since if $s\in{C}$, $\pi^{-1}(s)=\left\{s',s''\right\})$ and it sends $[s']-[s'']\in{H^0(\pi^{-1}(s))}$ to 
$2(-1)^md^2([s']-[s''])\in{H^0(\pi^{-1}(s))}$ by the computation of the matrix $A$. 
Hence, $\gamma_*(\pi_*\mathbb Q)^-\to(R^{2m}f_*\mathbb Q)_v$ is injective.
Since $(R^{2m}f_*\mathbb Q)_v$ is a local system of rank one on $C$ and also $(\pi_*\mathbb Q)^-$, it is an isomorphism.

\end{proof}

\subsection{Chow groups of smooth quadric bundle}

\begin{df}
Let $X$ and $S$ be smooth irreducible algebraic varieties and $f:X\rightarrow S$ a smooth morphism.
We say that $f:X\rightarrow S$ admits a relative cellular decomposition in the weak sense if there exists a stratification by closed subvarieties
$X_{\alpha}\subset{X_{\alpha+1}}\subset...\subset{X}$ so that 
$$f_{\alpha}=f_{|X_{\alpha}\backslash X_{\alpha-1}}:X_{\alpha}\backslash X_{\alpha-1}=X_{\alpha,1}\sqcup ...\sqcup X_{\alpha,l}\rightarrow S$$ where $X_{\alpha,1}\rightarrow S$,\ldots ,$X_{\alpha,l}\rightarrow S$ are 
${\mathbb A}^k$ fibrations in the weak sense(not necessarily locally trivial).

\end{df}

\begin{prop}
\label{Cel}

If $f:X\rightarrow S$ admits a relative cellular decomposition $X=X_0\supset X_1\supset ...\supset X_\alpha\supset ...$ 
with $codim(X,X_\alpha)=c_{\alpha}$
\begin{enumerate}
\item [(i)]$\oplus_{\alpha}\CH^{k-c_{\alpha}}(S)\tilde{\rightarrow}\CH^k(X)$
\item [(ii)] $\oplus_{\alpha,\beta}\CH^{k-c_{\alpha}-c_{\beta}}(S)\tilde{\rightarrow}\CH^k(X\times_S X)$.
\end{enumerate}
\end{prop}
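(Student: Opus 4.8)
The plan is to establish (i) by induction on the length of the cellular filtration, feeding the Chow localization sequence with homotopy invariance along the affine-fibration strata, and then to deduce (ii) by exhibiting a relative cellular decomposition of $X\times_S X$ whose cells are the fibered products of the cells of $X$. The one preliminary input I would isolate first is homotopy invariance adapted to the weak setting: if $g:E\to S$ is an $\mathbb A^k$-fibration in the weak sense, then $g^*:\CH^j(S)\to\CH^j(E)$ is an isomorphism for all $j$. Since $g$ need not be locally trivial, I cannot quote the affine-bundle theorem directly; instead I would argue by Noetherian induction on $\dim S$, using that over the generic point the fiber is $\mathbb A^k$ over a field (so Chow groups are unchanged), and propagating down the strata of $S$ by localization.

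Write the decomposition as $X=Z_0\supset Z_1\supset\cdots\supset Z_{r+1}=\emptyset$ with $Z_p$ closed, and let $T_p=Z_p\setminus Z_{p+1}$ be the locally closed stratum of codimension $c_p$ in $X$, a disjoint union of weak $\mathbb A^{k_p}$-fibrations over $S$ with $k_p=N-c_p$, where $N=\dim X-\dim S$. Setting $U_p=X\setminus Z_p$, one has $U_{p+1}\setminus T_p=U_p$ with $T_p$ closed in $U_{p+1}$, so the localization sequence reads
$$\CH_*(T_p)\xrightarrow{\,i_*\,}\CH_*(U_{p+1})\xrightarrow{\,\mathrm{res}\,}\CH_*(U_p)\to 0.$$
By homotopy invariance $\CH^j(T_p)\cong\CH^j(S)$, and a codimension-$(k-c_p)$ class on $T_p$ has codimension $k$ in $X$, so the $T_p$-term contributes $\CH^{k-c_p}(S)$. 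I would then prove by induction that these sequences are split short exact, which yields $\CH^k(U_{p+1})\cong\bigoplus_{q\le p}\CH^{k-c_q}(S)$; the base case $U_1=T_0$ is a single affine fibration, and the case $p=r$ gives (i).

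The main obstacle is precisely the splitting, equivalently the injectivity of $i_*$, since the localization sequence is only right-exact a priori. Surjectivity of the assembled map $\bigoplus_q\CH^{k-c_q}(S)\to\CH^k(X)$ is immediate from right-exactness; the real content is the absence of relations among the cell classes. I would obtain this by constructing, for each stratum, an explicit section $s_p:\CH^{k-c_p}(S)\to\CH^k(X)$ landing in the total space, given by pulling a cycle back to $T_p$ along $f$ and taking its closure in $X$. Well-definedness uses homotopy invariance on $T_p$, and compatibility of closure with restriction to the opens $U_{p+1}\supset U_p$ shows that $\mathrm{res}$ is split surjective with $\mathrm{res}\circ s_p=\mathrm{id}$ on the top cell and that restriction to successive opens detects each summand in turn; this simultaneously splits every sequence and forces the $s_p$ to be jointly injective.

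Finally, for (ii) I would note that the locally closed subsets $T_\alpha\times_S T_\beta$ stratify $X\times_S X$ and that a fibered product over $S$ of two weak affine fibrations is again one, with fiber $\mathbb A^{k_\alpha}\times\mathbb A^{k_\beta}=\mathbb A^{k_\alpha+k_\beta}$. Since $\dim(X\times_S X)=\dim S+2N$, the cell $T_\alpha\times_S T_\beta$ has codimension $(N-k_\alpha)+(N-k_\beta)=c_\alpha+c_\beta$ in $X\times_S X$. Ordering these cells by total codimension produces a relative cellular decomposition of $X\times_S X\to S$, and applying part (i) to it gives $\CH^k(X\times_S X)\cong\bigoplus_{\alpha,\beta}\CH^{k-c_\alpha-c_\beta}(S)$, which is (ii).
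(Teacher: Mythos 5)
Your overall architecture --- localization plus homotopy invariance along the strata for part (i), and for part (ii) the observation that the cells $T_\alpha\times_S T_\beta$ make $X\times_S X\to S$ relatively cellular so that (i) applies --- is exactly the route the paper takes; the paper simply outsources (i) to K\"ock (locally trivial case) and to Chernousov--Gille--Merkurjev (weak case), and the cellular structure on $X\times_S X$ to Karpenko. Your surjectivity argument is complete: Noetherian induction on the base gives surjectivity of $g^*$ for a weak $\mathbb A^k$-fibration, and right-exactness of the localization sequences assembles these into a surjection $\bigoplus_\alpha\CH^{k-c_\alpha}(S)\to\CH^k(X)$ (which is, in fact, all the paper uses downstream).

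The gap is in the injectivity, at two points. First, your Noetherian induction only yields \emph{surjectivity} of $g^*:\CH^j(S)\to\CH^j(T_p)$; injectivity for fibrations that are not locally trivial is a separate statement and does not follow from the generic-fibre computation plus localization, since localization is itself only right exact. Second, and more seriously, the proposed sections do not split the right maps. Taking the closure in $X$ of the pullback of a cycle to $T_p$ is not well defined on Chow classes: two rationally equivalent cycles on $S$ give closures differing by a class supported on $Z_{p+1}$, and nothing in your argument controls that ambiguity. Even granting a choice of representatives, what you produce is at best a section of the restriction $\mathrm{res}:\CH(U_{p+1})\to\CH(U_p)$; such a section gives $\CH(U_{p+1})\cong\mathrm{im}(i_*)\oplus\CH(U_p)$ but says nothing about the injectivity of $i_*:\CH(T_p)\to\CH(U_{p+1})$, which is exactly where relations among cell classes could hide, and your step ``restriction to successive opens detects each summand'' silently assumes that injectivity. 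What is needed is a left inverse of $i_*$, e.g.\ the map $x\mapsto f_*\bigl(x\cdot[\bar T_\beta]\bigr)$ against the dual cells, whose intersection matrix with the cells is triangular with invertible diagonal entries --- this is how the cited references argue --- or else simply the citation of K\"ock, CGM and Karpenko that the paper itself uses.
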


\begin{proof}

Part (i) has been proved by Köck, \cite{Köck} in the case where $X_{\alpha,i}\rightarrow S$ are locally trivial fibrations.
The result also holds for fibration in the weak sense \cite{CGM}.

Part (ii) follows in the same way since $X\times_S X\rightarrow S$ admits a relative cellular decomposition, \cite{Karpenko}.

\end{proof}

\begin{proposition}
Let $f:X\rightarrow S$ be a smooth quadric bundle.
There exists of finite covering $\pi:S'\rightarrow S$ so that the quadric bundle $f':X'=X\times_S S'\rightarrow S'$ 
admits a relative cellular decomposition.
\end{proposition}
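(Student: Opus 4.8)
The plan is to reduce the statement to the case of a \emph{split} quadric bundle --- one whose structural quadratic form carries a complete flag of isotropic subbundles --- and to construct the relative cellular decomposition for such bundles by induction on the relative dimension. The finite cover will serve only to split the generic fibre; the heart of the matter is then to spread this splitting out over the whole base.

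First I would set up the linear-algebraic data. Writing $X\hookrightarrow\mathbb P(\mathcal{E})\to S$, the subvariety $X$ is cut out by a section $q$ of $\mathrm{Sym}^2\mathcal{E}^\vee\otimes L$ for some line bundle $L$ on $S$, non-degenerate on every fibre. Let $K=\mathbb C(S)$ be the function field and $X_\eta$ the generic fibre, a smooth quadric over $K$. By the theory of quadratic forms over fields, $q_\eta$ attains maximal Witt index over a finite extension $K'/K$ (its splitting field), over which $X_\eta$ becomes split and acquires a complete flag of isotropic subspaces. Taking $S'$ to be the normalisation of $S$ in $K'$ gives a finite morphism $\pi:S'\to S$, and the generic fibre of $f':X'=X\times_S S'\to S'$ is a split quadric over $K'=\mathbb C(S')$.

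Next I would prove that a split quadric bundle $g:Q\to T$ of relative dimension $d$ --- one equipped with a flag $0\subset\mathcal{F}_1\subset\cdots$ of isotropic subbundles of its underlying $\mathcal{E}$ --- admits a relative cellular decomposition, by induction on $d$. The isotropic line subbundle $\mathcal{F}_1$ furnishes a section $\sigma:T\to Q$, and projection from $\sigma$ along the tangent-hyperplane subbundle $\mathcal{F}_1^\perp$ exhibits $Q$ as the union of: a big cell (an $\mathbb{A}^d$-fibration over $T$, namely the complement of the tangent-hyperplane section $Q\cap\mathbb P(\mathcal{F}_1^\perp)$); an intermediate stratum which is an $\mathbb{A}^1$-fibration over the smaller split quadric bundle $Q'=\{q=0\}\subset\mathbb P(\mathcal{F}_1^\perp/\mathcal{F}_1)\to T$ of relative dimension $d-2$ (the tangent-hyperplane section is the projective cone over $Q'$ with vertex $\sigma$, and the cone minus its vertex is an $\mathbb{A}^1$-fibration over $Q'$); and the vertex section $\sigma(T)$. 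Since $Q'$ inherits the induced isotropic flag, the inductive hypothesis gives its cellular decomposition, and combining all the strata --- with base cases $d=0$ (two disjoint sections) and $d=1$ (a $\mathbb P^1$-bundle) --- yields a stratification whose successive differences are weak-sense $\mathbb{A}^k$-fibrations. This is precisely the blow-up / $\mathbb P^1$-bundle mechanism already used over the discriminant in Section 3, run in reverse and iterated, and Proposition \ref{Cel} then computes the Chow groups.

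The main obstacle is that the splitting produced above lives a priori only over the generic point of $S'$, whereas the inductive construction requires the isotropic flag as a flag of genuine \emph{subbundles} over all of $S'$. To deal with this I would replace each generically defined isotropic subsheaf by its saturation $\bar{\mathcal{F}}_j\subset\mathcal{E}$: isotropy passes to the saturation because $q$ vanishes on a dense open and the relevant target sheaf is torsion-free, and on the smooth base $S'$ each $\bar{\mathcal{F}}_j$ fails to be a subbundle only along a locus of codimension $\geq 2$. I would then enlarge the stratification of $S'$ by adjoining the fibres over this locus as additional lower-dimensional closed strata, so that over the complementary open the maps are honest weak-sense $\mathbb{A}^k$-fibrations. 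Since the eventual application only needs Chow groups with $\mathbb Q$-coefficients --- where the projection formula $\pi_*\pi^*=\deg(\pi)\cdot\mathrm{id}$ allows descent from $S'$ back to $S$ --- the contribution of a codimension-two locus is controlled, and this suffices. It is this spreading-out step, rather than the cell combinatorics, where the genuine difficulty lies.
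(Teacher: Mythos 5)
Your construction is, at its core, the same as the paper's: the classical cell decomposition of a split quadric by projecting from a point of the fibre (the big cell, the tangent-hyperplane section as a cone over a quadric of dimension two less, the vertex), run relatively and iterated, after a finite base change that supplies the needed rational points/linear subspaces in every fibre. The paper states this in one paragraph and simply asserts that sections of the relative Fano varieties $F_i(X/S)$ exist after a finite covering; you have correctly identified the point it glosses over, namely that splitting the \emph{generic} fibre does not by itself give the splitting data as subbundles over all of $S'$.

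The problem is that your remedy for this does not prove the stated proposition. If you take $S'$ to be the normalisation of $S$ in the splitting field of $q_\eta$, saturate the generically defined isotropic subsheaves, and then adjoin the fibres over the codimension $\geq 2$ degeneracy locus $Z\subset S'$ as extra closed strata, the resulting stratification is not a relative cellular decomposition in the sense of Definition 5.7: that definition requires every piece of every open stratum to be an $\mathbb A^k$-fibration \emph{over $S'$}, whereas your extra strata fibre only over the proper closed subset $Z$. What you obtain is a genuinely weaker statement (and one that would also perturb Proposition 5.8(i) and hence the generator count in Proposition 5.10, where $\phi'$ is used as an isomorphism, not merely a surjection). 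The clean repair is to choose $S'$ differently so that no spreading-out is ever needed: take $S'$ to be an irreducible multisection, finite and surjective over $S$, of the relative isotropic flag variety of $X/S$ (equivalently, iterate: after a finite cover get a section of $X\to S$, project, and repeat on the quadric bundle of relative dimension $d-2$, composing the finitely many covers). The tautological flag restricted to such an $S'$ is a flag of honest isotropic subbundles over \emph{all} of $S'$, and your inductive cell construction then goes through verbatim and yields the proposition as stated. With that modification your argument coincides with the paper's.
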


\begin{proof}

Recall that in the case of a quadric $Q$ defined over $\mathbb C$ the cellular decomposition is obtained as follows. Chose a point $p\in Q$ and consider the intersection $Q\cap T_p Q$, which is a cone over a smooth quadric $Q'$ of dimension $dim(Q)-2$. By induction(starting with the case $\mathbb P^1$ and $\mathbb P^1\times\mathbb P^1$) $Q'$ admits a cellular decomposition. Hence $Q$ admits a cellular decomposition. 

We can apply the same process in the relative case if all the data needed to produce the cellular decomposition are defined over the base(if there exists a section of $F_i(X/S)\rightarrow S$ for $i\leq dim(X/S)/2$). This can be achieved by passing through a finite covering of the base.

\end{proof}

\begin{proposition}
\label{OddQ}

Let $f:X\rightarrow S$ a quadric bundle which is smooth of relative dimension $2m-1$. Assume for simplicity that $S$ is a surface.
Let $\xi_i\subset{X}$ be a relative linear section.  We have a surjection of $\mathbb Q$ vector spaces
$$\phi:\CH_2(S)^{\oplus 2m}\oplus CH_1(S)^{\oplus 2m-1}\oplus\CH_0(S)^{\oplus 2m-2}\rightarrow \CH_n(X\times_S X)$$
defined by
\begin{eqnarray*}\phi(\alpha_0,...,\alpha_{2m-1},\beta_1,...,\beta_{2m-1},\gamma_2,...,\gamma_{2m-1})=
([\xi_{2m-1}\times_S\xi_0]_*(\alpha_0),...,[\xi_{0}\times_S\xi_{2m-1}]_*(\alpha_{2m-1})\\ ,[\xi_{2m-1}\times_S\xi_1]_*(\beta_1),...,[\xi_{1}\times_S\xi_{2m-1}]_*(\beta_{2m-1}),
[\xi_{2m-1}\times_S\xi_2]_*(\gamma_2),...,[\xi_2\times_S\xi_{2m-1}]_*(\gamma_{2m-1})
\end{eqnarray*}

\end{proposition}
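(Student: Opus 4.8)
The plan is to reduce to the case of a quadric bundle that admits a relative cellular decomposition, where the assertion is a direct consequence of Proposition \ref{Cel}(ii), and then to transport the result down to $S$ by a transfer argument that is available precisely because we work with $\mathbb{Q}$-coefficients. First I would fix the relative linear sections as honest closed subschemes $\xi_i=X\cap L_i$, with $L_i\subset\mathbb P(\mathcal E)$ a relative linear subspace of relative dimension $i+1$, so that $[\xi_i]=h^{2m-1-i}\cdot[X]$ with $h=c_1(\mathcal O_{\mathbb P(\mathcal E)}(1))$; these classes are defined over $S$ and are compatible with base change. By the preceding proposition there is a finite surjective morphism $\pi\colon S'\to S$, which I may assume flat of degree $N$ after normalising the source, such that $f'\colon X'=X\times_S S'\to S'$ admits a relative cellular decomposition, whose cell closures are, in each relative dimension $0,\dots,2m-1$, the relative linear subspaces and sub-quadrics of the fibres. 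Writing $\xi_i'=\xi_i\times_S S'$, the class $[\xi_i']$ agrees with the class of the cell closure of dimension $i$ up to an invertible rational scalar (the factor being $2$ in the range $i\le m-1$, where $h^{2m-1-i}=2[\mathbb P^i]$ on the fibres).

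Applying Proposition \ref{Cel}(ii) to $f'$ gives an isomorphism
$$\bigoplus_{\alpha,\beta}\CH^{k-c_\alpha-c_\beta}(S')\xrightarrow{\ \sim\ }\CH^k(X'\times_{S'}X'),$$
where $c_\alpha,c_\beta$ run over the codimensions $0,\dots,2m-1$ of the cell closures. Taking $k=2m-1$ and $n=2m+1$, the vanishing $\CH^l(S')=0$ for $l\notin\{0,1,2\}$ forces the only surviving pairs to satisfy $c_\alpha+c_\beta\in\{2m-3,2m-2,2m-1\}$, i.e. $i+j\in\{2m-1,2m,2m+1\}$ for cell closures of dimensions $i,j$. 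One checks that there are exactly $2m$, $2m-1$ and $2m-2$ such pairs, matching the summands $\CH_2(S')^{\oplus 2m}$, $\CH_1(S')^{\oplus 2m-1}$, $\CH_0(S')^{\oplus 2m-2}$. Since the $[\xi_i']$ differ from the cell-closure classes only by invertible scalars, the analogous map $\phi'$ over $S'$, built from the products $[\xi_i'\times_{S'}\xi_j']_*$, is therefore surjective.

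To descend I consider the induced morphism $\Pi\colon X'\times_{S'}X'\to X\times_S X$, which is finite flat of degree $N$, so that $\tfrac1N\,\Pi_*\Pi^*=\mathrm{id}$ on $\CH_*(X\times_S X)$ with $\mathbb Q$-coefficients. Given $\zeta\in\CH_n(X\times_S X)$ I express $\Pi^*\zeta$ through $\phi'$ as a sum of terms $[\xi_i'\times_{S'}\xi_j']_*(\alpha')$ with $\alpha'\in\CH_*(S')$ and $i+j\in\{2m-1,2m,2m+1\}$. Because $\xi_i'\times_{S'}\xi_j'=(\xi_i\times_S\xi_j)\times_S S'$, the square
$$\xymatrix{\xi_i'\times_{S'}\xi_j' \ar[r]^{\ \psi\ }\ar[d]_{g'}& \xi_i\times_S\xi_j\ar[d]^{g}\\ S'\ar[r]^{\pi}& S}$$
is Cartesian and $\Pi\circ\iota'=\iota\circ\psi$, where $\iota',\iota$ denote the inclusions into $X'\times_{S'}X'$ and $X\times_S X$. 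The base-change formula for proper pushforward and flat pullback \cite{W.Fulton} then yields
$$\Pi_*\bigl([\xi_i'\times_{S'}\xi_j']_*(\alpha')\bigr)=\iota_*g^*(\pi_*\alpha')=[\xi_i\times_S\xi_j]_*(\pi_*\alpha').$$
As $i+j\in\{2m-1,2m,2m+1\}$, each such term is one of the summands occurring in $\phi$, hence $\zeta=\tfrac1N\Pi_*\Pi^*\zeta$ lies in the image of $\phi$, which proves surjectivity.

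The hard part will be the descent: one must know that the cellular decomposition over $S'$ really accounts for every class of $\CH_n(X'\times_{S'}X')$ (which is exactly where the existence of the cover and Proposition \ref{Cel}(ii) enter) and that the transfer $\Pi_*$ carries the $S'$-generators into the precise products $\xi_i\times_S\xi_j$ appearing in $\phi$, a point controlled by the Cartesian square above and by the numerology $i+j\in\{2m-1,2m,2m+1\}$. The matching of the three families with $\CH_2$, $\CH_1$, $\CH_0$ and the passage from cell closures to the sub-quadrics $\xi_i'$ are then routine bookkeeping.
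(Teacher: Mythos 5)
Your argument is correct and follows essentially the same route as the paper: pass to the finite cover $S'\to S$ over which the bundle has a relative cellular decomposition, apply Proposition \ref{Cel}(ii) there (with the linear sections $\xi'_i$ differing from the cell closures only by invertible rational scalars), and descend by the finite pushforward with $\mathbb{Q}$-coefficients via the base-change/projection-formula square. Your use of $\tfrac1N\Pi_*\Pi^*=\mathrm{id}$ is just a slightly more explicit form of the paper's diagram chase using surjectivity of $(\rho\times\rho)_*$ and $\pi_*$.
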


\begin{proof}

By Proposition \ref{Cel} we can take a finite covering $\pi:S'\rightarrow S$ such that $f':X'=X\times_S S'\rightarrow S'$ admits a relative cellular
decomposition. Denote $\rho:X'\rightarrow X$ and $\rho:X\times_{S'}X'\rightarrow X\times_S X$ the finite coverings obtained by base change.
Consider the relative linear sections $\xi'_i=\pi'^{-1}(\xi_i)\subset{X'}$. We have the following commutative diagram :

$$
\xymatrix{ \CH_2(S')^{\oplus 2m}\oplus \CH_1(S')^{\oplus 2m-1}\oplus\CH_0(S')^{\oplus 2m-2} \ar[r]^{\pi_*} \ar[d]^{\phi'}& \CH_2(S)^{\oplus 2m}\oplus \CH_1(S)\oplus  \CH_0(S)^{\oplus 2m-r} \ar[d]^{\phi}\\
\CH_n(X'\times_{S'} X') \ar[r]^{(\rho\times\rho)_*}&\CH_n(X\times_S X)}.
$$

Indeed since we took $\xi'_i=\pi'^{-1}(\xi_i)$ we have $[\xi'_{i}\times_{S'}\xi'_{2m+1-i-k}]=\rho^*[\xi_{i}\times_S\xi_{2m+1-i-k}]$.
So by the projection formula,
for $\alpha\in{\CH_k(X)}$ we have 
$$[\xi_{i}\times_{S}\xi_{2m+1-i-k}]_*\alpha=(\rho\times\rho)_*([\xi'_{i}\times_{S'}\xi'_{2m+1-i-k}]_*\rho_*\alpha).$$

As $\pi:S'\rightarrow S$ and $\rho\times\rho:X'\times_{S'} X'\rightarrow X\times_S X$ are surjective (they are finite covering),
$\pi_*:\CH_{k'}(S)\rightarrow\CH_k(S)$ and $(\rho\times\rho)_*:\CH_n(X'\times_{S'} X')\rightarrow\CH_n(X\times_S X)$ are surjective.
As the quadric bundle $f':X'\rightarrow S'$ admits a relative cellular decomposition, (i) says that 
$\phi'$ is an isomorphism.
Thus by the diagram chase $\phi$ is surjective.

\end{proof}

We immediately deduce the following :

\begin{proposition}
\label{NOdd}

Let $f:X\rightarrow S$ be a quadric bundle which is smooth of relative dimension $2m-1$.
Assume for simplicity that $S$ is a surface.
Denote $$K_S=\phi(\CH_1(S)^{\oplus 2m-1}\oplus\CH_0(S)^{\oplus 2m-2}).$$. 
Let $F\in{CH_n(X\times_S X)}$ be a relative correspondence of degree zero.
If $F\in{K_S}$, then $F^3=0$.

\end{proposition}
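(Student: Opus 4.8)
The plan is to reduce the statement to a codimension count for the intersection product on the surface $S$, exploiting that every generator of $K_S$ factors through a class on $S$ of \emph{strictly positive} codimension. First I would use the bilinearity of the composition of relative correspondences to reduce $F^3=0$ to proving that $F_1\circ F_2\circ F_3=0$ for any three generators $F_1,F_2,F_3$ of $K_S$. By the definition of $\phi$ and of $K_S$, each such generator has the form $[\xi_a\times_S\xi_b]_*(\alpha)$ with $\alpha\in\CH_k(S)$, $k\in\{0,1\}$ (the $\CH_1(S)$-generators having $a+b=2m$ and the $\CH_0(S)$-generators $a+b=2m+1$). Exactly as for the $p_{2i}$, Lieberman's Lemma lets me rewrite each generator as a composition $F_l=[\xi_{b_l}]\circ\alpha_l\circ[\xi_{a_l}]^t$, where $[\xi_{a_l}]^t\in\Corr_S(X,S)$, $\alpha_l\in\CH_{k_l}(S)$ is regarded as a self-correspondence of the base $S$, and $[\xi_{b_l}]\in\Corr_S(S,X)$.

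Next I would insert these factorizations into the triple composition and regroup the middle into a chain of self-correspondences of $S$, using that the composition of self-correspondences of $S$ over $S$ is the intersection product on $\CH^*(S)$ (with unit $[S]$). This yields
\[
F_1\circ F_2\circ F_3=[\xi_{b_1}]\circ M\circ[\xi_{a_3}]^t,\qquad M=\alpha_1\cdot([\xi_{a_1}]^t[\xi_{b_2}])\cdot\alpha_2\cdot([\xi_{a_2}]^t[\xi_{b_3}])\cdot\alpha_3 .
\]
The key point is a codimension estimate for $M$. Each intermediate factor $[\xi_{a_l}]^t[\xi_{b_{l+1}}]\in\CH_{a_l+b_{l+1}-2m+3}(S)$ (the composition degree already computed in the proof of Proposition 2.2) is either zero or a genuine cycle class, hence of codimension $\ge 0$; meanwhile each $\alpha_l$ lies in $\CH^{2-k_l}(S)$ with $2-k_l\ge 1$. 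Since codimensions add under the intersection product, $M$ has codimension at least $3$ in the surface $S$, so $M\in\CH^{\ge 3}(S)=0$. Therefore $F_1\circ F_2\circ F_3=0$, and summing over generators gives $F^3=0$.

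The conceptual heart of the statement, and the only place requiring care, is the bookkeeping that isolates the codimension contributed by the three factors $\alpha_1,\alpha_2,\alpha_3$: one must check that the intermediate compositions $[\xi_a]^t[\xi_b]$ never contribute negative codimension (they are honest classes on $S$, or vanish), so that the three codimension-$\ge 1$ classes coming from $\CH_1(S)\oplus\CH_0(S)$ alone already push the product past $\dim S=2$. This is precisely what distinguishes $K_S$ from the full image of $\phi$: the omitted $\CH_2(S)=\CH^0(S)$-generators (the $p_{2i}$) contribute codimension $0$ and are idempotent rather than nilpotent, so they cannot be included. I would also record that the same count with only two factors gives codimension $\ge 2$, which may be nonzero, which is why the sharp conclusion is $F^3=0$ and not $F^2=0$.
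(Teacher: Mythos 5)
Your proof is correct and follows essentially the same route as the paper: both factor each generator of $K_S$ as $[\xi_b]\circ\alpha\circ[\xi_a]^t$ via Lieberman's Lemma, use associativity to collapse the triple composition onto a chain of self-correspondences of $S$, and kill the result by the codimension count $\CH^{\ge 3}(S)=0$, treating the degenerate case where an intermediate factor $[\xi_a]^t[\xi_b]$ lies in negative codimension separately. Your closing remark on why only $F^3=0$ (and not $F^2=0$) follows from this count also matches the structure of the paper's argument.
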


\begin{proof}

Denote for $C\subset{S}$ an irreducible curve and $1\leq j\leq 2m-1$ 
$$q_{j,C}=[\xi_{j}\times_S\xi_{2m-j}]_*[C]=[\xi_{j}\times_S\xi_{2m-j}].pr^*[C]=[\xi_{j,C}\times_C\xi_{2m-j,C}]\in{CH_n(X\times_S X)}$$
We have $q_{j,C}=[\xi_{2m-j}]\circ[C]\circ[\xi_{j}]^t$. 
Let us show that $q_{i,C_1}q_{j,C_2}q_{k,C_3}=0$.
We have
$$q_{i,C_1}q_{j,C_2}q_{k,C_3}=[\xi_{2m-i}][C_1][\xi_{i}]^t[\xi_{2m-j}][C_2][\xi_{j}]^t[\xi_{2m-k}][C_3][\xi_{k}]^t$$
We claim that 
\begin{equation}
\label{formula}
[C_1][\xi_{i+1}]^t[\xi_{2m-1-j}][C_2][\xi_{j+1}]^t[\xi_{2m-1-k}][C_3]=0.
\end{equation}
To prove the claim,
consider the expressions $[\xi_{i+1}]^t[\xi_{2m-1-j}]\in{\Corr^p_S(S,S)=\CH^p(S)}$ and $[\xi_{j+1}]^t[\xi_{2m-1-k}]\in{\Corr^q_S(S,S)=\CH^q(S)}$. 
If $p<0$ or $q<0$ the formula holds. If not then the expression (\ref{formula}) belongs to $\CH^{p+q+3}(S)$ and vanishes since $p+q\geq 0$.

Denote for $s\in{S}$ and $2\leq j\leq 2m-1$
$$q_{j,s}=[\xi_{j}\times_S\xi_{2m+1-j}]_*[s]=[\xi_{j}\times_S\xi_{2m+1-j}].pr^*[s]=[\xi_{j,s}\times\xi_{2m+1-j,s}]\in{CH_n(X\times_S X)}$$
We have $q_{j,s}=[\xi_{2m+1-j}]\circ[s]\circ[\xi_{j}]^t$. 
Similarly, one shows that $q_{j,C}q_{k,s}=q_{k,s}q_{j,C}=0$ and $q_{j,s}q_{k,t}=0$ for $C\subset{S}$, $s\in{S}$ and $t\in{S}$.  

\end{proof}

Let $f:X\rightarrow S$  be a smooth quadric bundle with relative dimension $2m-2$, $X$ and $S$ smooth quasi-projective, $dim(X)=n$.
Assume for simplicity that $S=C$ is a curve. 

Let $\xi_i\subset{X}$ be relative linear sections of $f:X\hookrightarrow \mathbb P(\mathcal{E})\rightarrow C$.

Let $F_{m-1}(X/C)\rightarrow\tilde{C}\rightarrow C$ be the Stein factorization where $F_{m-1}(X/C)$ is the relative Fano variety of ($m-1$)-dimensional linear subspaces.
Denote $\Gamma\subset{F_{m-1}(X/C)\times_C X}$ the incidence correspondence.

Let $W\subset{F_{m-1}(X/S)}$ be a smooth multisection of degree $d$ of $F_{m-1}(X/S)\rightarrow\tilde{C}$ and 
$\Gamma_W\subset{W\times_S X}=p^{-1}(W)$ the restriction of $\Gamma$ to $W$.
We assume the double covering $\tilde{C}\rightarrow S$ non trivial so that $\tilde{C}$ and $C$ are irreducible.

Denote $Z=q(\Gamma_W)\subset{X}$. $Z\times_C Z\subset{X\times_C X}$ have then two irreducible components :
$Z\times_C Z=(Z\times_C Z)^+\cup(Z\times_C Z)^-\subset{X\times_C X}$ obtained by considering diagonal and the antidiagonal of $\tilde{C}\times_S\tilde{C}$.

By the Proposition \ref{Cel}, there exists a finite covering $C'\rightarrow C$ so that the quadric bundle 
$f':X'=X\times_C C'\rightarrow C'$ obtained by base change admits a relative cellular decomposition and 
in particular contains two relative $m-1$ planes $\Lambda'$ and $\Lambda''$. Consider the relative linear sections
$\xi'_i=\pi'^{-1}(\xi_i)\subset{X'}$.
By the Proposition 5.10 we have a surjection of $\mathbb Q$ vector spaces 
$$\phi': \CH_1(C')^{\oplus 2m+2}\oplus\CH_0(C')^{\oplus 2m}\rightarrow \CH_n(X'\times_{C'} X')$$
defined by

$\phi(\alpha_0,..,\alpha_{m-1}^{11},\alpha_{m-1}^{12}\alpha_{m-1}^{21}\alpha_{m-1}^{22},..,\alpha_{2m-2},\beta_1,..,\beta'_{m-1},\beta''_{m-1},\beta'_m,\beta''_m.,\beta_{2m-2})=$\\
$([\xi'_{2m-2}\times_{C'}\xi'_0]_*(\alpha_0),..,[\Lambda'\times_{C'}\Lambda']_*(\alpha_{m-1}^{11}),[\Lambda'\times_{C'}\Lambda'']_*(\alpha_{m-1}^{12}) [\Lambda'\times_{C'}\Lambda'']_*(\alpha_{m-1}^{21}),[\Lambda'\times_{C'}\Lambda'']_*(\alpha_{m-1}^{22}),..,[\xi'_{0}\times_{C'}\xi'_{2m-2}]_*(\alpha_{2m-2}), [\xi'_{2m-2}\times_{C'}\xi'_1]_*(\beta_1),..,[\xi'_m\times_{C'}\Lambda']_*(\beta'_{m-1}),[\xi'_m\times_{C'}\Lambda'']_*(\beta''_{m-1}),
[\Lambda'\times_{C'}\xi_m]_*(\beta'_m)[\Lambda'\times_{C'}\xi_m]_*(\beta''_m),..,[\xi_{1}\times_{C'}\xi_{2m-2}]_*(\beta_{2m-2}))$

Denote by $\rho:X'\rightarrow X$ the finite covering obtained by base change.
Since $\rho\times\rho:X'\times_{C'}X'\rightarrow X\times_C X$ is surjective(it is a finite covering),
$(\rho\times\rho)_*:\CH_n(X'\times_{C'}X')\rightarrow \CH_n(X\times_C X)$ is surjective.
We thus obtain the following Corollary :

\begin{corollaire}
\label{EvenQ}

$\psi=(\rho\times\rho)_*\circ\phi': \CH_1(C')^{\oplus 2m+2}\oplus\CH_0(C')^{\oplus 2m}\rightarrow \CH_n(X\times_C X)$
is surjective.

\end{corollaire}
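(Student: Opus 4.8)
The plan is to read off the surjectivity of $\psi$ from the fact that, by its very definition, it is the composite $(\rho\times\rho)_*\circ\phi'$ of two maps, each of which I claim is surjective; since a composite of surjections is surjective, it suffices to treat the two factors separately.

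First I would record the surjectivity of $\phi'$, which is essentially an instance of Proposition \ref{Cel}(ii). The covering $C'\to C$ was chosen precisely so that $f':X'\to C'$ admits a relative cellular decomposition, and hence so does the fibre product $X'\times_{C'}X'\to C'$, the cells being products of cells (cf. the reference to \cite{Karpenko} in Proposition \ref{Cel}). For the even-dimensional quadric bundle $X'\to C'$ the cells are indexed by the relative linear sections $\xi'_i$ away from the middle together with the two families of maximal relative linear subspaces $\Lambda',\Lambda''$ occupying the two middle cells; these are exactly the data entering the definition of $\phi'$. Proposition \ref{Cel}(ii) then identifies $\CH_n(X'\times_{C'}X')$ with the corresponding sum of Chow groups of $C'$, the splitting into $\CH_1(C')$-- and $\CH_0(C')$--summands being dictated by the codimensions $c_\alpha+c_\beta$ of the product cells together with the vanishing $\CH^{j}(C')=0$ for $j\notin\{0,1\}$. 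In particular $\phi'$ is surjective (it is in fact the isomorphism of Proposition \ref{Cel}(ii)).

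For the second factor, $\rho\times\rho:X'\times_{C'}X'\to X\times_C X$ is obtained from the finite surjective covering $C'\to C$ by base change, hence is itself finite and surjective. Over $\mathbb Q$ its pushforward is therefore surjective: writing $e=\deg(\rho\times\rho)$, the projection formula gives $(\rho\times\rho)_*(\rho\times\rho)^*=e\cdot\mathrm{id}$ on $\CH_n(X\times_C X)$, so $\tfrac{1}{e}(\rho\times\rho)^*$ is a section of $(\rho\times\rho)_*$.

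Combining the two steps, $\psi=(\rho\times\rho)_*\circ\phi'$ is a composite of surjections and is therefore surjective. I do not expect any genuine obstacle here: all of the substance has already been extracted in Proposition \ref{Cel} (through the relative cellular decomposition available after base change) and in the surjectivity of $\phi'$. The only point meriting a word of care is that, in contrast with the odd-dimensional case of Proposition \ref{OddQ}, one does not attempt to descend the two maximal families $\Lambda',\Lambda''$ to $X\times_C X$ and thereby build an intrinsic map on the base; this is precisely why the statement is phrased as surjectivity of the composite $\psi$, and why the pullback identity $[\xi'_i\times_{C'}\xi'_j]=(\rho\times\rho)^*[\xi_i\times_C\xi_j]$ used in Proposition \ref{OddQ} is not needed here.
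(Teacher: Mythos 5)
Your proposal is correct and follows essentially the same route as the paper: the text preceding the corollary establishes surjectivity of $\phi'$ from the relative cellular decomposition of $f':X'\to C'$ (Proposition \ref{Cel}) and surjectivity of $(\rho\times\rho)_*$ from the fact that $\rho\times\rho$ is a finite surjective covering, and concludes by composing the two surjections. Your use of the projection formula $(\rho\times\rho)_*(\rho\times\rho)^*=e\cdot\mathrm{id}$ just makes explicit the surjectivity of the pushforward that the paper merely asserts.
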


We now look for generators. 
We clearly have for $\beta=(\beta_1,..,c'_1,c'_2,c'_3,c'_4,..,\beta_{2m-2})\in{\CH_0(C')}$ and $c_i=\pi(c'_i)\in{C}$,
$\psi(\beta)=[\xi_{2m-2}\times_C\xi_1]_*(\pi_*(\beta_1)),..,[\xi_{m,c_1}\times\Lambda'_{c_1}],[\xi_{m,c_2}\times\Lambda''_{c_2}],
[\Lambda'_{c_3}\times\xi_{m,c_3}],[\Lambda''_{c_4}\times\xi_{m,c_4}],..,[\xi_{1}\times_C\xi_{2m-2}]_*(\pi_*(\beta_{2m-2}))$.
Denote 
\begin{equation}
\label{Sub}
K_C=\psi(\CH_0(C'))\subset{\CH_n(X\times_C X)}
\end{equation}

We clearly have for $i\neq m-1$ and $\alpha_i\in{\CH_1(C')}$
$\psi(\alpha_i)=[\xi_i\times_C\xi_{2m-2-i}]_*(\pi_*(\alpha_i))$.

\begin{lemme}
\label{Gen}

Denote 
$p_{2m-2}^+=\Gamma_W g^*g_*\Gamma^t_W=[(Z\times_C Z)^+]\in{\CH_n(X\times_C X)}$ and
$p_{2m-2}^-=\Gamma_W g^*g_*\Gamma^t_W=[(Z\times_C Z)^-]\in{\CH_n(X\times_C X)}$.

Then 
\begin{eqnarray}
(\rho\times\rho)_*(\Lambda'\times_{C'}\Lambda')&=&p_{2m-2}^++\omega_1 \\
(\rho\times\rho)_*(\Lambda''\times_{C'}\Lambda'')&=&p_{2m-2}^++\omega_2 \\
(\rho\times\rho)_*(\Lambda'\times_{C'}\Lambda'')&=&p_{2m-2}^-+\omega_3 \\ 
(\rho\times\rho)_*(\Lambda''\times_{C'}\Lambda')&=&p_{2m-2}^-+\omega_4  
\end{eqnarray}
with $\omega_i\in{K_C}$.
\end{lemme}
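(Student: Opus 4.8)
The plan is to reduce all four identities to a single computation on the generic fibre of $f$, exploiting that $K_C$ is exactly the span of the fibre-supported classes. Indeed, by Corollary \ref{EvenQ} the map $\psi$ is surjective, and $K_C=\psi(\CH_0(C'))$ is the image of its $\CH_0$-part, i.e.\ the subspace generated by cycles supported over points of $C$; applied fibrewise, using the cellular decomposition of the smooth fibres, this identifies $K_C$ with the kernel of the restriction map $\CH_n(X\times_C X)\to\CH_n(X_\eta\times_\eta X_\eta)$ to the generic fibre $X_\eta$. Hence it suffices to show that, for each identity, the two sides have the same image in $\CH_n(X_\eta\times_\eta X_\eta)$.

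First I would record the geometric meaning of the components. Since $f$ is smooth, the Stein double cover $\tilde C\to C$ is \'etale, and over the generic point its two sheets are the two rulings of $(m-1)$-planes, interchanged by the monodromy $\tau$. The diagonal $D^+=\Delta_{\tilde C}$ then parametrises same-ruling pairs and $D^-=\tau^*$ opposite-ruling pairs, so that $p_{2m-2}^+=[(Z\times_C Z)^+]$ and $p_{2m-2}^-=[(Z\times_C Z)^-]$ are the reduced closures of the same-ruling and opposite-ruling loci.

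Next I would carry out the generic comparison. After base change to the generic point $\eta'$ of $C'$ the cover splits and $X_{\eta'}$ is a split quadric carrying $\Lambda'_{\eta'}$ and $\Lambda''_{\eta'}$, one plane in each ruling. The key input is that any two $(m-1)$-planes of a fixed ruling on a smooth quadric are rationally equivalent -- the ruling is an orthogonal Grassmannian, hence rationally connected, and the quadric is cellular -- so the class of $\Lambda'_{\eta'}\times_{\eta'}\Lambda'_{\eta'}$ depends only on its ruling. Pushing forward along $\rho\times\rho$ symmetrises over the sheets of $C'\to C$; because $\tau$ interchanges the two rulings, $(\rho\times\rho)_*(\Lambda'_{\eta'}\times\Lambda'_{\eta'})$ becomes over $\eta$ a $\tau$-symmetric same-ruling cycle, which is proportional to the generic class of $(Z\times_C Z)^+$. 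The same symmetrisation matches $(\rho\times\rho)_*(\Lambda''\times_{C'}\Lambda'')$ with $p_{2m-2}^+$ and the two mixed products with $p_{2m-2}^-$, the monodromy swap explaining why the two same-ruling pushforwards $\Lambda'\times\Lambda'$ and $\Lambda''\times\Lambda''$ give the same $p_{2m-2}^+$ (resp.\ why the two opposite-ruling ones give $p_{2m-2}^-$).

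The generic comparison shows that each side restricts on $X_\eta\times_\eta X_\eta$ to the same same-ruling (resp.\ opposite-ruling) cycle up to a rational scalar; once that scalar is shown to be $1$, the difference $(\rho\times\rho)_*(\Lambda^{(i)}\times_{C'}\Lambda^{(j)})-p_{2m-2}^{\pm}$ restricts to zero on the generic fibre, so by the localization sequence it is supported over finitely many points of $C$ and hence lies in $K_C$, which is the assertion. The main obstacle is to establish that the scalar equals $1$: one must balance the degree $d$ of the multisection $W$ (entering $p_{2m-2}^{\pm}$ through $g^*g_*$) against the degree of $C'\to C$ (entering the push-forward $\rho\times\rho$), working on the split fibre $X_{\eta'}$ through its cellular decomposition and the reducedness of $[(Z\times_C Z)^{\pm}]$. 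A subsidiary point, needed to pass from the fibre to a genuine cycle identity, is to upgrade the fibrewise rational equivalence of $(m-1)$-planes within a ruling to a relative equivalence modulo fibre classes, where the rational connectedness of the orthogonal Grassmannian and a spreading-out argument are used.
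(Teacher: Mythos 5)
Your overall strategy is the same as the paper's: reduce everything to a comparison over the generic point of $C$, observe that a class killed by restriction to the generic fibre is supported over finitely many points and hence (by the fibrewise cellular decomposition and the surjectivity in Corollary \ref{EvenQ}) lies in $K_C$, and then match the two sides fibrewise. That reduction is sound. But the proof stops exactly where the actual content of the lemma begins: you write that ``the main obstacle is to establish that the scalar equals $1$'' and leave it unresolved. This is not a subsidiary normalization; it is the whole point, because $p_{2m-2}^{\pm}=\Gamma_Wg^*g_*\Gamma_W^t$ carries the degree $d$ of the multisection $W\to\tilde C$ (so it acts like $d^2$ on the middle cohomology of a fibre), while $(\rho\times\rho)_*$ carries the degree of $C'\to C$, and these must be shown to cancel. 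Moreover, on the split generic fibre the four classes $[\Lambda'\times\Lambda']$, $[\Lambda''\times\Lambda'']$, $[\Lambda'\times\Lambda'']$, $[\Lambda''\times\Lambda']$ are \emph{independent} generators (planes in different rulings of an even-dimensional quadric are not rationally equivalent), so ``proportional to the generic class of $(Z\times_CZ)^+$'' is not a single scalar to be fixed but a vector of four coefficients plus the lower cell contributions; your symmetrisation argument asserts the answer rather than deriving it.

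The paper closes this gap by working upstairs on $X'\times_{C'}X'$: it writes the difference $F=\tfrac{1}{d}\rho^*[(Z\times_CZ)^+]-\rho^*\rho_*[\Lambda''\times_{C'}\Lambda'']$ in terms of the cellular generators with unknown coefficients $f_{11},f_{22},f_{12},f_{21}$ and a remainder, and then computes the action of each of the two terms of $F$ on $H^{2m-2}(X_s,\mathbb Q)$ for generic $s\in C'$, using the intersection numbers $[\Lambda'_s]\cdot[\Lambda''_s]$ and $[\Lambda'_s]\cdot[\Lambda'_s]$ together with an explicit count of the points of the fibres of $g:W\to\tilde C$ and $\pi:C'\to C$ over $s$; the two actions agree, forcing all the $f_{ij}$ to vanish and leaving $F\in K_{C'}$, after which one pushes forward. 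Without carrying out this computation (or an equivalent one), the four identities of the lemma are not established. Your subsidiary point about spreading out rational equivalence within a ruling is also handled differently in the paper, via the relative cellular decomposition after the finite base change $C'\to C$, which is the cleaner route since it simultaneously provides the generators needed for the coefficient comparison.
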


\begin{proof}

Let us prove the second equality. The others are similar.
To this aim, we prove that
$1/d(\rho\times\rho)^*[(Z\times_C Z)^+]-(\rho\times\rho)^*(\rho\times\rho)_*[\Lambda''\times_{C'}\Lambda'']=\omega'_2\in{\CH_n(X'\times_{C'} X')}$
with $\omega'_2\in{K_{C'}=\phi'(CH_0(C'))}$.

Let
$F=1/d\rho^*[(Z\times_C Z)^+]-\rho^*\rho_*[\Lambda''\times_{C'}\Lambda'']\in{\CH_n(X'\times_{C'} X')}$.
We have by the relative cellular decomposition of $f':X'\rightarrow C'$(Proposition 3.5)
$$F=f_{1,1}[\Lambda'\times_{C'}\Lambda']+f_{2,2}[\Lambda''\times_{C'}\Lambda'']+f_{1,2}[\Lambda'\times_{C'}\Lambda'']$$ $$+f_{2,1}[\Lambda''\times_{C'}\Lambda']+J'_f+\omega'_2\in{\CH_n(X_W\times_W X_W)}.$$

The action of $(\rho\times\rho)^*[(Z\times_C Z)^+]$ on $(R^{2m}f'_*\mathbb Q)_s=H^{2m}(X_s,\mathbb Q)$ for $s\in{C'}$ generic, the fiber of the covering 
$\pi:C'\rightarrow C$ being then of cardinal $2d$, is given by 
$[\Lambda'_s]\rightarrow d^2[\Lambda'_s]$ and
$[\Lambda''_s]\rightarrow d^2[\Lambda''_s]$
if $[\Lambda'_s].[\Lambda''_s]=0$ and $[\Lambda'_s].[\Lambda'_s]=1$. 
In this situation we have $g_*\Gamma_W\Gamma_Wg^*=d^2[D^+]\in{\CH_1(\tilde{C}\times_C\tilde{C})}$.
\\This action is given by
$[\Lambda'_s]\rightarrow d^2[\Lambda''_s]$ and
$[\Lambda''_s]\rightarrow d^2[\Lambda'_s]$
if $[\Lambda'_s].[\Lambda''_s]=1$ and $[\Lambda'_s].[\Lambda'_s]=0$.
In this situation we have $g_*\Gamma_W\Gamma_Wg^*=d^2[D^-]\in{\CH_1(\tilde{C}\times_C\tilde{C})}$.

The action of $(\rho\times\rho)^*(\rho\times\rho)_*[\Lambda''\times_{C'}\Lambda'']$ on $(R^{2m}f'_*\mathbb Q)_s=H^{2m}(X_s,\mathbb Q)$ for $s\in{C'}$ generic, the fiber of the covering $\pi:C'\rightarrow C$ being then of cardinal $2d$, is given by  
$[\Lambda'_s]\rightarrow d[\Lambda'_s]$ and
$[\Lambda''_s]\rightarrow d[\Lambda''_s]$
if $[\Lambda'_s].[\Lambda''_s]=0$ and $[\Lambda'_s].[\Lambda'_s]=1$. 
\\This action is given by
$[\Lambda'_s]\rightarrow d[\Lambda''_s]$ and
$[\Lambda''_s]\rightarrow d[\Lambda'_s]$
if $[\Lambda'_s].[\Lambda''_s]=1$ and $[\Lambda'_s].[\Lambda'_s]=0$. 
\\We have indeed 
$(\pi(\Lambda''))_s=\pi(\Lambda'')\cap X_s=\cup_{i=1}^{d}P'_{i,s}\cup\cup_{j=1}^{d}P''_{j,s}$ with $[P'_{i,s}]=[\Lambda'_s]$ and $[P''_{i,s}]=[\Lambda''_s]$,
and
$(\rho(\Lambda''\times_{C'}\Lambda'')_s=\rho(\Lambda''\times_{C'}\Lambda''))\cap X_s=\cup_{i=1}^{d}(P'_{i,s}\times P'_{i,s})\cup\cup_{j=1}^{d}(P''_{j,s}\times P''_{j,s})$.
Thus the action of $F$ on $(R^{2m}f'_*\mathbb Q)_s=H^{2m}(X_s,\mathbb Q)$ for $s\in{C'}$ generic vanishes. 

This gives $f_{1,1}=0$, $f_{2,2}=0$, $f_{1,2}=0$, $f_{2,1}=0$ and $J'_f=0$.
Thus $F=\omega'_2\in{\CH_n(X'\times_{C'} X')}$.  

Applying $(\rho\times\rho)_*$ we obtain :
$[(Z\times_C Z)^+]-d(\rho\times\rho)_*[\Lambda''\times_{C'}\Lambda'']=(\rho\times\rho)_*\omega'_2=\omega_2\in{\CH_n(X\times_S X)}$.

\end{proof}

Hence we obtain :
\begin{lemme}
\label{actnileven}

Let $F\in{\CH_n(X\times_C X)}$ a relative correspondence that acts as zero on $Rf_*\mathbb Q$.
Then $F\in{K_C}$.

\end{lemme}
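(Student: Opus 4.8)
The plan is to combine the surjectivity of $\psi$ with Lemma \ref{Gen} to write $F$ explicitly modulo $K_C$, and then to pin down the remaining coefficients by letting $F$ act on the local systems $R^{2j}f_*\mathbb Q$, degree by degree.

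First I would invoke Corollary \ref{EvenQ}: since $\psi$ is surjective, $F$ is a combination of the generators $[\xi_i\times_C\xi_{2m-2-i}]$ (from the $\CH_1(C')$-summands with $i\neq m-1$), the four classes $[\Lambda'\times_{C'}\Lambda']$, $[\Lambda'\times_{C'}\Lambda'']$, $[\Lambda''\times_{C'}\Lambda']$, $[\Lambda''\times_{C'}\Lambda'']$ (from the $i=m-1$ summands), and elements of $K_C=\psi(\CH_0(C'))$. Because $C$ is irreducible, $\pi_*\CH_1(C')=\CH_1(C)=\mathbb Q[C]$, so the image of each $\CH_1(C')$-summand with $i\neq m-1$ is spanned by $[\xi_i\times_C\xi_{2m-2-i}]$; and by Lemma \ref{Gen} each of the four $\Lambda$-classes equals $p_{2m-2}^+$ or $p_{2m-2}^-$ modulo $K_C$. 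Hence I can write
$$F\equiv\sum_{i=0,\,i\neq m-1}^{2m-2}n_i\,[\xi_i\times_C\xi_{2m-2-i}]+n^+p_{2m-2}^++n^-p_{2m-2}^-\pmod{K_C}$$
for suitable $n_i,n^+,n^-\in\mathbb Q$. The goal is then to show all these coefficients vanish, which is exactly the assertion $F\in K_C$.

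Next I would evaluate the action on cohomology degree by degree. Choose a contractible open $V\subset C$ disjoint from the finitely many points over which the generators of $K_C$ are supported; over $V$ the correspondences in $K_C$ restrict to the empty cycle on each fiber, so they act as zero on sections over $V$. For $j\neq m-1$ the local system $R^{2j}f_*\mathbb Q$ has rank one, $[\xi_i\times_C\xi_{2m-2-i}]$ acts on it by $\delta_{i,j}$ times a nonzero constant, and $p_{2m-2}^\pm$ act as zero for degree reasons (they shift degree to the middle). Evaluating $F_*$ on a nonzero $\alpha\in\Gamma(V,R^{2j}f_*\mathbb Q)$ and using $F_*\alpha=0$ therefore gives $n_j=0$ for every $j\neq m-1$.

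It remains to treat the middle degree $j=m-1$, which I expect to be the main obstacle since it requires the explicit action of $p_{2m-2}^\pm$ on the two-dimensional local system $R^{2m-2}f_*\mathbb Q$. On a generic fiber this space has basis the two rulings $[\Lambda'_s],[\Lambda''_s]$, and by the intersection computation underlying Lemma \ref{Gen} and Corollary \ref{app} the operators $p_{2m-2}^+$ and $p_{2m-2}^-$ act, up to the factor $d^2$, as the identity and the swap of the two rulings (the two being possibly interchanged depending on the parity of $m$). Decomposing into the invariant vector $v^+=[\Lambda'_s]+[\Lambda''_s]$ and the anti-invariant vector $v^-=[\Lambda'_s]-[\Lambda''_s]$, the operator $n^+p_{2m-2}^++n^-p_{2m-2}^-$ acts by $d^2(n^++n^-)$ on $v^+$ and by $d^2(n^+-n^-)$ on $v^-$. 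Since $F$ acts as zero on all of $Rf_*\mathbb Q$, while the generators $[\xi_i\times_C\xi_{2m-2-i}]$ with $i\neq m-1$ act as zero on $R^{2m-2}f_*\mathbb Q$, both $n^++n^-=0$ and $n^+-n^-=0$, forcing $n^+=n^-=0$. Combined with $n_j=0$ from the previous step, all coefficients vanish, and therefore $F\in K_C$, as claimed.
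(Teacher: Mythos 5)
Your proposal follows exactly the paper's argument: use Corollary \ref{EvenQ} together with Lemma \ref{Gen} to write $F=\sum_{i\neq m-1}n_i p_{2i}+n^+p_{2m-2}^++n^-p_{2m-2}^-+\omega$ with $\omega\in K_C$, then kill the coefficients by letting $F$ act on $Rf_*\mathbb Q$. The paper compresses the second step into one sentence; your degree-by-degree evaluation, and in particular the diagonalization of $n^+p_{2m-2}^++n^-p_{2m-2}^-$ on the invariant and anti-invariant vectors in the middle-degree rank-two local system, is precisely the computation that sentence leaves implicit.
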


\begin{proof}

By Corollary \ref{EvenQ} and Lemma \ref{Gen} there exist rational numbers $n_j$, $n^+$, $n^-$ such that
$F=\sum_{i=0,i\neq m-1}^{2m-2} n_i p_{2i}+n^+p_{2m-2}^++n^-p_{2m-2}^-+\omega$ with $\omega\in{K_C}$.
Since $F$ acts as zero on $Rf_*\mathbb Q$ we obtain $n_i=n^+=n^-=0$.

\end{proof}

We finally note that as in the odd dimensional case we have :

\begin{rem}

Let $F\in{\CH_n(X\times_C X)}$ a relative correspondence of degree zero.
If $F\in{K_C}$, then $F^2=0$.

\end{rem}





\bibliographystyle{elsarticle-num}

\providecommand{\bysame}{\leavevmode\hbox to3em{\hrulefill}\thinspace}
\providecommand{\MR}{\relax\ifhmode\unskip\space\fi MR }
\providecommand{\MRhref}[2]{
  \href{http://www.ams.org/mathscinet-getitem?mr=#1}{#2}
}
\providecommand{\href}[2]{#2}

\end{document}